\documentclass[reqno,12pt,a4paper]{amsart}

\numberwithin{equation}{section}
\allowdisplaybreaks 

\usepackage[normalem]{ulem}

\usepackage{amssymb,eucal,mathrsfs,setspace,bm}

\usepackage[all]{xy}

\usepackage[noadjust]{cite}
\usepackage[inner=25mm, outer=25mm, top=25mm, bottom=25mm, head=10mm, foot=10mm]{geometry}

\usepackage{array}
\newcolumntype{C}{>{$}c<{$}} 

%
%
\usepackage[largesc,theoremfont]{newtxtext}      
\usepackage[libertine,cmbraces]{newtxmath} 
%
\begingroup
  \makeatletter
  \@for\theoremstyle:=definition,remark,plain\do{%
    \expandafter\g@addto@macro\csname th@\theoremstyle\endcsname{%
      \addtolength\thm@preskip{.5\baselineskip plus .2\baselineskip minus .2\baselineskip}
      \addtolength\thm@postskip{.5\baselineskip plus .2\baselineskip minus .2\baselineskip}
    }%
  }
\endgroup

\usepackage{enumitem}
\setitemize{leftmargin=*}   
\setenumerate{leftmargin=*, 
  label=\textup{(\alph*)},  
	ref=\textup{\alph*}}      

\usepackage{mathtools} 
\usepackage{graphicx}


\usepackage[colorlinks=true,citecolor=red,linkcolor=blue]{hyperref} 

\usepackage[capitalise,noabbrev]{cleveref} 







\renewcommand{\cong}{\simeq}

\DeclareMathOperator{\id}{id}
\DeclareMathOperator{\im}{im} 







\DeclarePairedDelimiterX{\comm}[2]{\lbrack}{\rbrack}{#1 , #2}  
\DeclarePairedDelimiterX{\acomm}[2]{\lbrace}{\rbrace}{#1 , #2} 
\DeclarePairedDelimiterX{\inner}[2]{\langle}{\rangle}{#1 , #2} 
\DeclarePairedDelimiterX{\super}[2]{\lparen}{\rparen}{#1 \delimsize\vert \mathopen{} #2} 

\newcommand{\ra}{\rightarrow}




\newcommand{\fld}[1]{\mathbb{#1}}    
\newcommand{\alg}[1]{\mathfrak{#1}}  

\newcommand{\ZZ}{\fld{Z}}
\newcommand{\QQ}{\fld{Q}}

\newcommand{\CC}{\fld{C}}



\newcommand{\ah}{\alg{h}}

\theoremstyle{plain}
\newtheorem{theorem}{Theorem}
\newtheorem{corollary}[theorem]{Corollary}
\newtheorem{lemma}[theorem]{Lemma}
\newtheorem{proposition}[theorem]{Proposition}
\newtheorem{remark}[theorem]{Remark}




\usepackage[all]{xy}

\newcommand{\lam}{\lambda}
\newcommand{\Lam}{\Lambda}

\newcommand{\ch}{\mathrm{ch}}

\newcommand{\hh}{\widehat{\mathfrak{h}}}


\newcommand{\Znn}{\mathbb{Z}_{\geq0}}
\newcommand{\Zp}{\mathbb{Z}_{>0}}
\newcommand{\Zn}{\mathbb{Z}_{<0}}
\newcommand{\Znp}{\mathbb{Z}_{\leq0}}
\newcommand{\cspan}{\mathrm{span}_{\CC}}

\newcommand{\cB}{\mathcal{B}}

\newcommand{\cG}{\mathcal{G}}

\newcommand{\geqc}[1]{ \underset{{\tiny #1}}{\geq}}
\newcommand{\leqc}[1]{ \underset{{\tiny #1}}{\leq}}
\newcommand{\asltwo}{\widehat{\mathfrak{sl}}_2}

\newcommand{\biadd}{\Delta}

\newcommand{\cwb}{D}
\newcommand{\chb}{C}
\newcommand{\chm}{S}
\newcommand{\cwm}{E}

\newcommand{\qbinom}{\genfrac{[}{]}{0pt}{}}
\newcommand{\fd}[2]{\mathrm{M}_{#1}\left(#2\right)}
\newcommand{\fda}[2]{F(#1,#2)}
\newcommand{\ef}[2]{\mathrm{EF}_{#1}\left(#2\right)}
\newcommand{\rf}[2]{\mathrm{RF}_{#1}\left(#2\right)}
\newcommand{\M}[1]{\mathrm{M}\left(#1\right)}
\newcommand{\Mcirc}[1]{\mathrm{M}^\circ\left(#1\right)}

\newcommand{\coker}[1]{\mathrm{Coker}\left(#1\right)}
\renewcommand{\ker}[1]{\mathrm{Ker}\left(#1\right)}



\newcommand{\alp}{\alpha}


\makeatletter
\renewcommand\author@andify{%
  \nxandlist {\unskip ,\penalty-1 \space\ignorespaces}%
    {\unskip {} \@@and~}%
    {\unskip \penalty-2 \space \@@and~}%
}
\makeatother

\hyphenation{semi-simple}
\hyphenation{super-algebra}

\begin{document}

\title[]{Rogers-Ramanujan exact sequences and 
free representations over free generalized vertex algebras}

\author[K~Kawasetsu]{Kazuya Kawasetsu}
\address[Kazuya Kawasetsu]{
Priority Organization for Innovation and Excellence\\
Kumamoto University\\
Kumamoto, Japan, 860-8555.
}
\email{kawasetsu@kumamoto-u.ac.jp}

\begin{abstract}
The Rogers-Ramanujan recursions are studied from the viewpoint of free representations over free (generalized) vertex algebras.
Specifically, we construct short exact sequences among the free representations over free generalized vertex algebras 
 which lift the recursions.
They  naturally generalize  exact sequences 
introduced by S.~Capparelli et al.
 We also show that an analogue of the Rogers-Ramanujan recursions is realized as 
an exact sequence among 
finite-dimensional free representations over a certain finite-dimensional vertex algebra.

As an application of exact sequences, we reveal a relation between free generalized vertex algebras and $\widehat{\mathfrak{sl}}_2$ spaces of coinvariants $L_{1,0}^{N,\infty}(\mathfrak n)$ introduced by B.~Feigin et al.
Moreover, it is shown that generalized $q$-Fibonacci recursions may be realized as exact sequences obtained by applying several functors to exact sequences among free representations over free generalized vertex algebras.
Finally, we consider a character decomposition formula of the basic $\widehat{\mathrm{sl}}_2$-module $L_{1,0}$  by M.~Bershtein et al.~related to the Urod vertex operator algebras, from the viewpoint of free representations and exact sequences.
\end{abstract}

\maketitle

\onehalfspacing

\section{Introduction} \label{sec:intro}

Let $g$ be a complex number and consider the series
 $$
 F_g(z,q)=\sum_{r=0}^\infty \frac{z^r q^{gr^2/2}}{(q)_r}=1+\frac{zq^{g/2}}{1-q}+\frac{z^2g^{2g}}{(1-q)(1-q^2)}+\cdots,
 $$
where $1/(1-q^i)$ is expanded into the sum $1+q^i+q^{2i}+\cdots$.
Note that the sums $F_2(1,q)$ and $F_2(q,q)$ are the sum sides of the celebrated Rogers-Ramanujan (RR) identities.
The series  satisfies the recursion relation
\begin{align}\label{genrrbasic}
&F_g(z,q)=F_g(zq,q)+zq^{g/2}F_g(zq^g,q).
\end{align}
When $g=2$, it is the {\em Rogers-Ramanujan (RR) recursion},
which plays crucial roles in the proof of the RR identities in \cite{RR} (see e.g.~\cite{A}).

In this paper, we study \eqref{genrrbasic} from the viewpoint of 
 {\em free representations} over {\em free (generalized) vertex algebras}.
 Specifically, we construct exact sequences among the free representations over free generalized vertex algebras which
 lift \eqref{genrrbasic}. 
 They may be seen as a natural generalization of exact sequences introduced  in \cite{CLM03} and extended in \cite{CalLM14}, \cite{PSW} and others.

Let us explain.
For simplicity, let us consider the cases of $g\in \ZZ$, which correspond to usual vertex algebras.
The general cases of $g\in \CC$ correspond to the theory of {\em generalized vertex algebras}, which will be mentioned later.

A {\em vertex algebra}  is a vector space $V$ equipped with a system of multiplications
$((n):V\times V\ra V)_{n\in\ZZ}$ satisfying certain axioms \cite{B}. 
Vertex algebras  
 appear in 2d (and higher-dimensional) conformal field theory in physics
and are becoming more and more ubiquitous in both mathematics and physics.
An important axiom of vertex algebras relevant to this paper is
 the {\em field axiom} 
 that for any $u,v\in V$, if $n$ is big enough
then $u(n)v=0$.
It implies that there are no free objects in the category of vertex algebras equipped with a fixed number of generators.
The field axiom rather forces the following notion of {\em free vertex algebras} 
 with respect to not only the number of generators but also data specifying when $(n)$-th products among generators become zero, as mentioned by R.~Borcherds \cite{B} and constructed by M.~Roitman  \cite{R}. 

Let $g$ be an integer.
A (singly generated)  free vertex algebra $F(g)$ 
 is a vertex algebra freely generated by a formal element $b$ satisfying $b(-g+n)b=0$ for any $n\geq 0$.
More precisely, it is an initial object in the category of 
vertex algebras with a fixed generator $b$ satisfying the same condition.
The vertex algebra $F(g)$ is infinite-dimensional and have a 
PBW like monomial basis satisfying the so-called difference $g$ condition.
The basis implies that $F(g)$ admits a bigrading $F(g)=\bigoplus_{r,d}F(g)_{r,d}$
such that a monomial $b(-n_r)\cdots b(-n_1)\bm1$ lies in $F(g)_{r,n_1+\cdots+n_r}$.
It then follows that the generating function 
$\chi_g(z,q)=\sum_{r,d}\dim(F(g)_{r,d})z^rq^d$ (the {\em character}
of $F(g)$) coincides with $F_g(zq^{1-g/2},q)$, which is a normalization of $F_g(z,q)$.

 A {\em module} over a vertex algebra $V$ is roughly speaking a vector space $M$ equipped with a system of actions 
 $((n):V\times M\ra M)_{n\in\ZZ}$ which satisfies the field axiom that for all $u\in V$ and $v\in M$, if $n\gg0$ then $u(n)v=0$.
 Or, we may say that a $V$-module is a module over a universal enveloping associative algebra $U(V)$ of $V$ such that the field axiom holds.
Note that $U(V)$ is defined as a topological vector space because an axiom of vertex algebras is an
identity among infinite sums of products of left multiplications.
The field axiom guarantees that the infinite sums in the axiom actually act as  finite sums on  $V$ and $M$, which are meant to be usual vector spaces.

The situation makes us to consider the following notion of {\em free representations}.
 Let us assume for simplicity that $V$ is generated by a single generator $b$.
 Let $m$ be an integer.
 A  free representation  of order $-m$  is  an initial object in the category of $V$-modules 
with a fixed single generator $v_m$ satisfying $b(-m+n)v_m=0$ for all $n\geq0$.
Note that our free representations  are usually not isomorphic to direct sums of $V$.
However, ones over
a free vertex algebra $F(g)$ turn out to have essentially the same characters as that of $F(g)$.
The free representation $\M m$ of order $-m$ over  $V=F(g)$ is non-zero and admits a PBW like basis. 
 Then the {\em character} $\chi_{g;m}(z,q)$ of $\M m$ 
 has the form
$\chi_{g;m}(z,q)=F_g(zq^{1-g/2+m},q)$, which is a normalization of $F_g(z,q)$ again. 

 The initiality of free representations implies that there is a natural
 exact sequence of the form $\M{m+g}\ra \M m\ra \M{m+1}\ra 0$.
If  $V=F(g)$, it follows that the first map is injective, namely,
we have a short exact sequence of the form
 \begin{equation}\label{ourintro-}
 0\ra \M {m+g}\ra \M m\ra \M {m+1} \ra 0
 \end{equation}
 of $F(g)$-modules.
It implies an identity among three normalized $F_g(z,q)$ 
 and it corresponds to \eqref{genrrbasic} after further normalization,
showing that \eqref{ourintro-} is a lift of a RR recursion.

%
%

Our point of view generalizes to recursions among finite analogues of $F_g(z,q)$.
Let $p$  be a positive integer and $\ell$ a non-negative integer.
Consider the polynomial 
$$
 F_{p,\ell}(z,q)=\sum_{r\geq 0} z^rq^{pr^2/2}\qbinom{\ell-(p-1)r}r_q,
 $$
where the symbol $\qbinom n r_q$ denotes the $q$-binomial coefficient.
The polynomial $F_{2,\ell}(z,q)$ coincides with the Carlitz $q$-Fibonacci polynomial \cite{Ca}.
It is related to finite analogues of the RR identities introduced in \cite{S} (see e.g., \cite{A04,Ci}).
The polynomial $F_{p,\ell}(z,q)$ satisfies recursion relations
\begin{align}
&F_{p,\ell+p}(z,q)=F_{p,\ell+p-1}(z,q)+zq^{\ell+p/2}F_{p,\ell}(z,q), \label{fibbasic}\\
&F_{p,\ell+p}(z,q)=F_{p,\ell+p-1}(zq,q)+zq^{p/2}F_{p,\ell}(zq^p,q).\label{finrrbasic}
\end{align}
The former with $p=2$ is the $q$-Fibonacci  recursion and the latter is
an analogue of \eqref{genrrbasic}.
The polynomial $F_{2,\ell}(z,q)$ also appears in a one-dimensional problem in Baxter's Hard Hexagon model and 
recursions \eqref{fibbasic}--\eqref{finrrbasic} naturally arise (cf.~\cite{OSS, Ci}).

We realize \eqref{finrrbasic} as an exact sequence among free representations of order $-m\in\ZZ$ over a certain finite-dimensional vertex algebra $\fda kp$ $(k\in\ZZ)$:
\begin{equation}\label{finrrintro}
0\ra \fd k{m+g}\ra \fd km\ra \fd k{m+1},
\end{equation}
where $\ell$ in \eqref{finrrbasic} is related to $k$ and $m$.
Here $\fd km$ is a free module of order $-m$ over $F(k,p)$.

By considering $\fd km$ as an $F(p)$-module via a surjection $F(p)\twoheadrightarrow \fda kp$, we may derive \eqref{finrrintro} from \eqref{ourintro-} with $g=p$ by 
taking  quotients.
We then observe that \eqref{ourintro-} with {\em non-integral} $g=1/p$ induces an exact sequence for \eqref{fibbasic}.
(We explain it more in \Cref{subsecintrofib}.)
This is one of the reasons why we consider any complex number $g$  in the beginning of the introduction.
We have the notion of a {\em free GVA} $F(g)$ with $g\in\CC$ \cite{Kaw15}, so that we can consider $F(1/p)$.
Here a {\em generalized vertex algebra} (GVA) is roughly a vector space which admits $(n)$-th products with $n\in\CC$ \cite{DL}.
The exact sequence \eqref{ourintro-}  generalizes to any $g\in \CC$ and $m\in\CC$.

\medskip
We now explain applications of our exact sequences. There are three kinds of applications.

\subsection{Applications to the theory of principal subspaces and $\asltwo$ spaces of coinvariants}
In \cite{R}, an embedding of $F(g)$ into a 
{\em lattice vertex operator algebra} (VOA) is constructed. 
We show that the free modules $\M m$ are also embedded into modules over lattice VOAs and the images coincide with the
 {\em lattice principal subspaces}  $W_L(m\beta^\circ)$ \cite{MP},
which generalize {\em Feigin-Stoyanovsky (FS) principal subspaces} of Kac-Moody Lie algebra modules \cite{SF}.
We then obtain exact sequences 
\begin{equation}\label{exactpsintro}
0\ra W_L((m+g)\beta^\circ)\hookrightarrow W_L(m\beta^\circ) \xrightarrow{e^{\beta^\circ}(-g^{-1}m-1)} W_L((m+1)\beta^\circ)\ra 0
\end{equation}
 (see \Cref{subsecRRprin} for more details). 
They are equivalent to the exact sequence in \cite{CLM03} ($g=2$) and rank one cases in \cite{PSW} ($g\in\Zp$)  if we compose them with certain simple current operators.
We remark that in \cite{CLM03}, S.~Capparelli, A.~Milas and J.~Lepowsky brought the
theory of FS principal subspaces into the theory of vertex algebras and lifted 
the RR recursion \eqref{genrrbasic} with $g=2$ to the exact sequence equivalent to \eqref{exactpsintro}.
See also \cite{CalLM08,CalLM14,PSW} and others.
Our results naturally generalize the exact sequence in \cite{CLM03}.

We then apply the exact sequences to construct lattice principal subspaces as certain invariance subspaces (\Cref{propinv}).
Moreover, we relate  $\asltwo$ spaces of coinvariants $L_{1,0}^{N,\infty}(\mathfrak n)$ studied extensively in \cite{FKLMM} and others with 
certain modules over a free GVA $F(1/2)$.
It implies a proof of the fermionic character formula of $L_{1,0}^{N,\infty}(\mathfrak n)$. 
See the end of \Cref{subseccoinv} for more details.

\subsection{Applications to Fibonacci recursions}\label{subsecintrofib}
Recall generalized Fibonacci recursion \eqref{fibbasic}.
We lift it to an exact sequence of the form
\begin{equation}\label{finrr2intro}
0\ra \fd {k-p}m\ra \fd km\ra \fd {k-1}m,
\end{equation}
where the modules are considered as  $F(p)$-modules (\Cref{propfib}).
Unlike \eqref{finrrintro}, it does not appear to be a consequence of \eqref{ourintro-} with $g=p$.
As we have already mentioned, we actually show that it can be induced from  \eqref{ourintro-} with $g=1/p$  by applying several functors
(\Cref{subsecswitch}).
To show this, we 
construct isomorphisms between the restricted dual 
modules of $\fd km$ and certain $F(p)$-modules  coming from free $F(1/p)$-modules (\Cref{corswitch}).
The isomorphisms  lift a certain combinatorial identity involving $q$-binomial coefficients of the form 
$\qbinom n r_q$ and $\qbinom n{pr}_q$ (\Cref{corchar}), e.g.,
\begin{equation}\label{eqn2rr}
z^{n}q^{n^2}\sum_{r=0}^{n}q^{r^2-2nr}
\begin{bmatrix}
2n-r\\r
\end{bmatrix}_q
z^{-r}
=
\sum_{r=0}^{n}q^{r^2}\begin{bmatrix}
n+r\\2r\end{bmatrix}_q z^{r}.
\end{equation}

We illustrate the relations among our exact sequences in \Cref{fig1}.

\begin{figure}
$
\xymatrix{
\mbox{Freeness of $\M m$}\ar@{-->}[d]&
\mbox{RR for $W_L(m\beta^\circ)$ \eqref{exactpsintro}}&
\mbox{Freeness of $\fd km$}\ar@{-->}[d]
\\
\mbox{RR \eqref{ourintro-}}\ar[d]\ar[r]\ar[ur]
&(g=p) \ar[r]_{quotient}
&\mbox{RR for $\fd km$ \eqref{finrrintro}}
\\
(g=1/p)\ar[r]^{quotient}_{summand}& \mbox{(RR for RF \eqref{efrrsummand})}\ar[r]_{rest.~dual\quad}&
\mbox{Fib for $\fd km$ \eqref{finrr2intro}}
}
$ 
\caption{Relations between exact sequences}\label{fig1}
\end{figure}
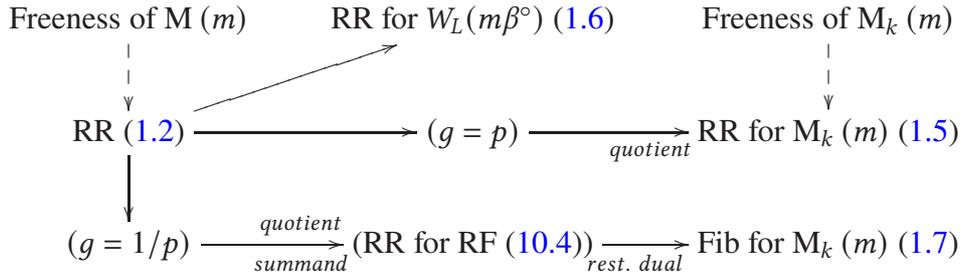

\subsection{Applications to the BFL character decomposition}
Finally, we consider a character decomposition formula \eqref{eqnmainchar}  of lattice VOA modules appearing in  \cite{BFL} and recent preprint \cite{Ke2}.
We relate the formula to the characters of free modules over $F(p)$ and $F(1/p)$.
    Note  that if $p=2$, the formula \eqref{eqnmainchar} with $z=1$ is a consequence of the decomposition 
    in terms of the Urod VOAs \cite{BFL}.

\subsection{Features}
Our construction suggests that RR recursion \eqref{genrrbasic}  arises the most naturally in the framework of free representations over free vertex algebras.
We may also say that the RR recursion is
  intrinsic in the definition of  vertex algebras.
  Our consideration of GVAs allows us to consider Fibonacci type recursions and $\asltwo$ spaces of coinvariants from our viewpoint.

   We note that free vertex algebras related to quivers recently appear in the study of Donaldson-Thomas invariants \cite{DM}.
We also note that  the fermionic  $q$-character formula of $L_{1,0}^{0,\infty}(\mathfrak n)$  coincides with that of a Virasoro minimal model of central charge $-3/5$ \cite{KKMM}.
The bosonic character formula of $L_{1,0}^{0,\infty}(\mathfrak n)$
is obtained in \cite{JMM}.
Moreover, the algebra $W_{A_1^\circ}\cap V_{A_1}\cong F(1/2)^{(\ZZ)}$, which is isomorphic to the restricted dual of $L_{1,0}^{0,\infty}(\mathfrak n)$, coincides with the commutant of $W_{A_1}$ inside $V_{A_1}$ studied extensively in \cite{Kaw23a}.


%
 
%

\subsection{Organization}
The paper is divided into four parts.
The first part consists of \Cref{secgva}--\Cref{secfin}.
In \Cref{secgva}, we recall the notion of GVAs and their modules.
The free GVAs are briefly explained in \Cref{secfree}.
In \Cref{sec:freemod}, we study free representations over  free GVAs and show
that \eqref{ourintro-} is exact.
In \Cref{secfin}, we introduce the finite-dimensional vertex algebra $\fda kp$ 
and \eqref{finrrintro} is shown to be exact.
 
The second part of the paper consists of \Cref{secpreapp}--\Cref{secinvcoinv}.
In \Cref{secpreapp}, we  recall the notion of lattice GVAs and restricted dual modules.
In \Cref{secps}, we explain the relationship between our RR exact sequences and ones among lattice principal subspaces.
We then apply the exact sequence to construct lattice principal subspaces by using certain invariance construction in  \Cref{secinvcoinv}. The relation to $\asltwo$ spaces of coinvariants is also established.

\Cref{secfib}--\Cref{secswitch} are the third part of the paper.
In \Cref{secfib}, we show that \eqref{finrr2intro} is exact. 
A goal of this part is to relate it with \eqref{ourintro-}.
The RF-type finite-dimensional modules are introduced in \Cref{secrf}.
In \Cref{secswitch}, we prove that 
the restricted dual modules of $\fd km$ are isomorphic to RF-type modules, which shows the relation between \eqref{finrr2intro} and \eqref{ourintro-}.

Finally, 
the Urod type character decomposition is discussed from the viewpoint of exact sequences in  \Cref{seccomp}.



\subsection*{Notations.}
All vector spaces in this paper are over the field of complex numbers $\CC$.
The set of integers, rational numbers, non-zero complex numbers are denoted by $\ZZ$,
 $\QQ$ and $\CC^\times$, respectively.
 The set of non-negative (resp., positive, non-positive, negative, non-zero) integers is denoted by $\Znn$ (resp., $\Zp,\Znp,\Zn, \ZZ^\times$).
 For $n,m,k\in\CC$, we write $n\geqc k m$ if $n-m\in k+\Znn$.
 We set $(q)_0=1$ and $(q)_r=(1-q)(1-q^2)\cdots (1-q^r)$.
Let $V$ be a vector space.
We set 
 $V[[z]]=\{\sum_{n\in\Znn} a_nz^n\,|\,a_n\in V\ (n\in\Znn)\}$ and
 $V\{z\}=\{\sum_{n\in\CC} a_nz^n\,|\,a_n\in V\ (n\in\CC)\}$.
 For any $c+\ZZ\in \CC/\ZZ$, we write 
 $V[[z]]z^{c+\ZZ}=\{\sum_{n=N}^\infty a_nz^{n+c}\,|\,N\in\ZZ,a_n\in V\ (n\geq N)\}$.
Let $W$ be a vector subspace of $V$.
The class of $v\in V$ in the quotient $V/W$ is often written as $[v]$ and so is the class of subsets of $V$.

\subsection*{Acknowledgements}

The author thanks Prof.~Atsushi Matsuo for fruitful discussions and advice on the original manuscripts.
He also thanks Profs.~Ching Hung Lam
and Hiroshi Yamauchi for helpful comments and discussions.
This research is partially supported by
MEXT Japan ``Leading Initiative for Excellent Young Researchers (LEADER)'',
JSPS Kakenhi Grant numbers 19KK0065, 21K13775 and 21H04993.

%

\section{Generalized vertex algebras}\label{secgva}
In this section, we briefly recall the definition of generalized vertex algebras (GVAs) and their modules \cite{DL}.
We follow the conventions in \cite{BK}.

\subsection{Definitions of generalized vertex algebras}\label{subsecgva}

Let $A$ be an abelian group equipped with a bimultiplicative function $\eta:A\times A\ra \CC^\times, (\gamma,\delta)\mapsto \eta_{\gamma,\delta}$, called a {\em phase}.
 We have a biadditive map $\biadd=\biadd_\eta:A\times A\ra \CC/\ZZ$ associated to $\eta$ uniquely determined by the relation $\eta_{\gamma,\delta}\eta_{\delta,\gamma}=e^{-2\pi\sqrt{-1}\biadd(\gamma,\delta)}$.
Let $V=\bigoplus_{\gamma\in A}V^\gamma$ 
 be an $A$-graded vector space.
 A homogeneous (parafermionic) {\em field} of degree $\gamma\in A$ on $V$ is a formal series
 $u(z)\in \mathrm{End}(V)\{z\}$
such that for any homogeneous element $v\in V^\delta$ ($\delta\in A$), we have $u(z)v\in V^{\gamma+\delta}[[z]]z^{-\biadd(\gamma,\delta)}$.
Consider a linear map $Y(\cdot,z):V\ra \mathrm{End}(V)\{z\}$.
Let $\bm 1\in V^0$ be a distinguished element called the vacuum vector and $\partial:V\ra V$ an $A$-grading preserving linear operator.
The tuple $(V=\bigoplus_{\gamma\in A}V^\gamma,Y,\bm1,\partial,\eta)$ is called an {\em $A$-graded GVA} with the phase $\eta$
if, for any $u\in V^\gamma$ and $v \in V^\delta$ ($\gamma,\delta\in A$), we have
\begin{enumerate}
\item (vacuum axiom) $Y(\bm1,z)=\id_V$, $Y(u,z)\bm1\in u+V[[z]]z$, $\partial \bm1=0$,
\item (translation covariance) $[\partial, Y(u,z)]=\partial_zY(u,z)$,
\item (field axiom) $Y(u,z)$ is a homogeneous field of degree $\gamma$,
\item (locality) there is $N\in\biadd(\gamma,\delta)$ such that
\begin{equation}\label{gvaloc}
(z-w)^NY(u,z)Y(v,w)-\eta_{\gamma,\delta}e^{\pi\sqrt{-1}N}(w-z)^NY(v,w)Y(u,z)=0.
\end{equation}
\end{enumerate}
 Here powers of the form $(x-y)^n$ are always expanded in the positive powers of the second variable $y$.
We often just write $V$ or $V=\bigoplus_{\gamma\in A}V^\gamma$.
We set $u(n)=\mathrm{Res}_z z^nY(u,z)$, where  $\mathrm{Res}_z$ denotes the formal residue in $z$, so that  $Y(u,z)=\sum_{n\in\CC}u(n)z^{-n-1}$.
If $A=\ZZ/2\ZZ$ and $\eta_{\gamma,\delta}=(-1)^{\gamma\delta}$,
then $\biadd$ is trivial and $V$ is nothing but a vertex superalgebra.
If  $\eta$ is trivial,  $V$ is a usual vertex algebra (with the $A$-grading).
In particular, if $A=0$, then $V$ is a vertex algebra.
In the following, we just call vertex superalgebras  vertex algebras.

We obviously have the notions of $A$-graded sub\,GVAs, ideals and quotients.

\subsection{Locality order}\label{subsecloc}
Consider the locality relation \eqref{gvaloc}.
If $Y(u,z)Y(v,w)\neq0$, the minimal number $N=N_0$ satisfying \eqref{gvaloc}  coincides with   the number $N_0$ defined by the relation
$$
u(N_0-1)v\neq0,\quad u(N_0+n)v=0 \quad (n\in\CC\setminus \Zn)
$$
(\cite[Corollary 3.6]{BK}) and called the locality order of the pair $(u,v)$.
If $u=v$, we call it the locality order of $u$.
We see that \eqref{gvaloc} holds if and only if $N=N_0,N_0+1,N_0+2,\ldots$.
We note that 
the locality order of $(u,\bm1)$ is $N_0=0$.

Let $\gamma$ be an element of $A$.
If $N=N_0$ is the locality order of a non-zero element $u\in V^\gamma$, then we see by \eqref{gvaloc} that $\eta_{\gamma,\gamma}=e^{-\pi\sqrt{-1}N}$.

If the locality order of $(u,v)$ is a non-positive
integer, then we may substitute $N=0$ in \eqref{gvaloc} and as a result, we have a commutation relation
\begin{equation}\label{commrel}
u(s)v(t)=\eta_{\gamma,\delta}v(t)u(s) \quad\mbox{for any}\ s,t\in\CC.
\end{equation}
In this case, we say that $u$ and $v$ {\em commute} with each other.
Graded subsets $U$ and $W$ of $V$ {\em commute} with each other if any homogeneous elements $u\in U$ and $v\in W$ commute.
A vertex algebra $V$ is {\em commutative} if it commutes with itself.
A commutative vertex algebra is equivalent to a commutative associative differential algebra \cite{B}.


\subsection{GV modules}
We recall the notion of modules over GVAs, which we call {\em GV modules}.
Let $V$ be a GVA with a phase $\eta$ and the associated biadditive map $\biadd$.
Let $B$ be a free additive $A$-set equipped with a biadditive map
$\biadd_1:A\times B\ra \CC/\ZZ$ 
extending $\biadd$.
Let $M=\bigoplus_{\lam\in B}M^\lam$ be a $B$-graded vector space equipped with a linear map $Y_M(\cdot,z):V\ra \mathrm{End}(M)\{z\}$.
The space $M$ is a $B$-graded {\em GV module} (or, {\em module}) over $V$
 if, for any $u\in V^\gamma$, $v\in V^\delta$ and $x \in M^\lam$ ($\gamma,\delta\in A,\lam\in B$), the following (1)--(3) hold:
(1) $Y_M(\bm1,z)=\id_M$,
(2) $Y_M(u,z)x\in M^{\gamma+\lam}[[z]]z^{-\biadd_1(\gamma,\lam)}$,
and (3) (Borcherds identity) for any $n\in\biadd(\gamma,\delta)$, we have
\begin{align*}
&(z-w)^nY_M(u,z)Y_M(v,w)x-\eta_{\gamma,\delta}e^{\pi\sqrt{-1}n}(w-z)^nY_M(v,w)Y_M(u,z)x\\
&\qquad=
\sum_{j\in\Znn} \frac1{j!}\partial_w^j\delta_{\biadd(\gamma,\lam)}(z,w)Y_M(u(n+j)v,w)x,
\end{align*}
where $\delta_{\biadd(\gamma,\lam)}(z,w)=\sum_{j\in \biadd(\gamma,\lam)}z^{-j-1}w^j$.
Here, we set $Y_M(u,z)=\sum_n u(n)z^{-n-1}$ with $u(n)\in\mathrm{End}(M)$.
We often just write $Y_M=Y$.
If $\eta$ and $\biadd_1$ are trivial, then $M$ is a usual vertex algebra module over the vertex algebra $V$.
If $A=0$, then all GV modules are vertex algebra modules.
Note that $V$ itself is naturally a $V$-module.

Let $M_1,M_2$ be $B$-graded GV modules over $V$, where $B$ is equipped with a common biadditive map $\Delta_1:A\times B\ra \CC/\ZZ$.
A $V$-module homomorphism is a $B$-grading preserving linear map $f:M_1\ra M_2$ such that $Y(f(u),z)f(v)=f(Y(u,z)v)$ for all $u\in V$ and $v\in M_1$.
We often encounter modules with different yet equivalent  gradings.
In this case, we may identify the gradings and then consider   homomorphisms.

\section{Free generalized vertex algebras}
\label{secfree}

In this section, we recall the theory of free generalized vertex algebras  \cite{Kaw15} (see also \cite{R}).

\subsection{Category $\cG(g)$}
Let $b$ be an indeterminate and $g$  a complex number.
Consider the phase $\eta:\ZZ\times\ZZ\ra \CC^\times$ 
defined by $\eta_{r,s}=e^{\pi\sqrt{-1}grs}$.
The associated biadditive map $\biadd=\biadd_\eta$ is given by $\biadd(r,s)=-grs+\ZZ$.
Consider the category $\cG(g)$ of $\ZZ$-graded GVAs
$V=\bigoplus_{\gamma\in \ZZ}V^\gamma$ with the phase $\eta$ generated by $b$ such that (1) $b$ is  homogeneous of degree 1, and (2) the locality relation
\begin{equation}\label{loce}
(z-w)^{-g}Y(b,z)Y(b,w)-(w-z)^{-g}Y(b,w)Y(b,z)=0
\end{equation}
holds. 
A morphism $f:V\ra W$ in $\cG(g)$ is a $\ZZ$-grading preserving linear map sending $b$ to $b$ and satisfying  (1) $f(\bm1)=\bm1$, (2) $f\circ \partial=\partial\circ f$,  and
(3) $f(Y(u,z)v)=Y(f(u),z)f(v)$ ($u,v\in V$).

We have an object $W_L$ in $\cG(g)$,  a (generalized) vacuum lattice
principal subspace of the lattice GVA $V_L$, where $L$ is a rank one  lattice generated by an element of square norm $g$ \cite{MP,Kaw15,R}. 
See \Cref{secps} for more details.

Before introducing the free GVAs, we give some technical assertions on objects in $\cG(g)$.
Let $V$ be an object of $\cG(g)$. 
The condition \eqref{loce} implies that the locality order of $b \in V$ is less than or equal to $-g$.
The relation \eqref{loce} implies the recursion relation that for any
$s,t\in\CC$,
\begin{align}\label{lemfund}
b(s)b(t)&=
\sum_{j=0}^\infty
(-1)^{j}\binom {-g} j b(t-g-j)b(s+g+j)
-\sum_{j=0}^\infty
(-1)^{j+1}\binom {-g} {j+1} b(s-1-j)b(t+1+j)
\end{align}
holds (cf.~{\rm \cite[Lemma 3.2]{Kaw15}}).
Since $V$ is generated by $b$, the space $V$ is spanned by
monomials in $b(n)$.
By applying the vacuum axiom and \eqref{lemfund} recursively to the monomials, we have the following proposition.
\begin{proposition} ({\rm cf.~\cite[Theorem 3.2]{Kaw15}})\label{propalg}
The following monomials span $V$:
\begin{equation} \label{basisfalg}
b(-n_r)\cdots b(-n_1)\bm1\quad (r\geq0, 
n_r\geqc g n_{r-1}\geqc g \cdots\geqc g n_1\geqc 0 1).
\end{equation}
Furthermore,
\begin{equation}\label{bzeroalg}
b(-n_r)\cdots b(-n_1)\bm1=0\quad \mbox{if }
n_1+\cdots+n_r\not\in \frac g2 r(r-1)+r+\Znn.
\end{equation}
\end{proposition}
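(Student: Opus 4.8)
The plan is to prove the two claims of the proposition by a simultaneous induction argument based on the reduction procedure implicit in \eqref{lemfund}. First I would fix terminology: call a monomial $b(-n_r)\cdots b(-n_1)\bm1$ \emph{admissible} if it satisfies the index constraints in \eqref{basisfalg}, i.e.\ $n_r\geqc g n_{r-1}\geqc g\cdots\geqc g n_1\geqc 0 1$ (equivalently $n_{i+1}\ge n_i+g$ for all $i$ and $n_1\ge 1$). Since $V$ is generated by $b$ and $\partial b = b(-2)\bm1$ etc., the relation $\partial u = u(-2)\bm1$ together with the vacuum axiom shows $V$ is spanned by arbitrary monomials $b(m_r)\cdots b(m_1)\bm1$ with $m_i\in\CC$. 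Then, using translation covariance and the vacuum axiom to kill factors $b(m)$ with $m\ge 0$ acting on $\bm1$ on the right (more precisely, $b(m)\bm1 = 0$ for $m\geqc 0 0$ because the locality order of $(b,\bm1)$ is $0$, and $b(-1)\bm1 = b$), one reduces to monomials where all indices are negative and the rightmost factor is of the form $b(-n_1)$ with $n_1\ge 1$; the remaining indices may still be arbitrary negative numbers in $1+\Znn$ shifted by $\ZZ$-translates, but in any case lie in a coset of $\ZZ$ determined by $g$.

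The core step is to push an arbitrary such monomial toward an admissible one. Suppose $b(-n_r)\cdots b(-n_1)\bm1$ is not admissible: there is a smallest $i$ with $-n_{i+1} > -n_i - g$ in the relevant order, i.e.\ the pair $(b(-n_{i+1}), b(-n_i))$ violates the difference-$g$ condition (here $s = -n_{i+1}$, $t = -n_i$, and $s < t - g$ fails — we rewrite using $b(s)b(t)$ with $s+g$ small). Apply \eqref{lemfund} to this adjacent pair of factors. On the right-hand side, the first sum produces terms $b(t-g-j)b(s+g+j)$ and the second sum produces $b(s-1-j)b(t+1+j)$; in both families the \emph{difference} between the new pair of indices has strictly increased toward the admissible configuration, or the leftmost index has strictly decreased — so a suitable lexicographic-type statistic on $(n_r,\dots,n_1)$ (for instance, the tuple of partial sums ordered appropriately, together with the "number of violated adjacencies") strictly decreases under the move, and all terms produced have total degree $n_1+\cdots+n_r$ unchanged. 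An induction on this statistic then shows every monomial is a linear combination of admissible ones, proving \eqref{basisfalg}. I would present the monotonicity of the statistic carefully but leave the binomial bookkeeping to the reader, citing \cite[Lemma 3.2, Theorem 3.2]{Kaw15}.

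For \eqref{bzeroalg}, I would observe that the total degree $d = n_1+\cdots+n_r$ of an admissible monomial of length $r$ satisfies $d \ge 1 + (1+g) + (1+2g) + \cdots + (1+(r-1)g) = r + \tfrac{g}{2}r(r-1)$, since the minimal admissible index string is $n_1 = 1$, $n_{i+1} = n_i + g$. Hence any admissible monomial of length $r$ lies in $\frac g2 r(r-1) + r + \Znn$. Combined with \eqref{basisfalg}, which expresses every monomial (of a fixed length $r$ and fixed total degree, both of which are preserved by the reduction move above) as a combination of admissible monomials of the \emph{same} length and total degree, it follows that any monomial of length $r$ whose total degree is not in $\frac g2 r(r-1) + r + \Znn$ must be zero.

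The main obstacle is making the termination/monotonicity argument in the reduction step fully rigorous: one must choose the right well-founded statistic on index tuples so that \emph{every} term on the right-hand side of \eqref{lemfund} is strictly smaller, and verify that the reduction does not leave the coset of admissible index patterns (in particular that indices stay in the correct $\ZZ$-coset and the rightmost factor stays $\ge 1$ in the appropriate sense). Since for non-integral $g$ the "difference-$g$ condition" involves the partial order $\geqc g$ rather than ordinary inequalities, some care is needed, but the structure of \eqref{lemfund} — where the two sums move indices in complementary directions — is exactly what guarantees a valid straightening law, and this is precisely the content borrowed from \cite{Kaw15}.
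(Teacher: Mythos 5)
Your proposal follows essentially the same route as the paper: spanning by monomials in $b(n)$, pruning with the vacuum axiom, and straightening adjacent pairs with \eqref{lemfund} while tracking a degree/length invariant, with the termination bookkeeping delegated to \cite{Kaw15}. The paper's own argument is just the one-sentence remark preceding the proposition (plus the citation), so your more detailed account — including the coset observation that admissible monomials of length $r$ have total degree in $\frac g2 r(r-1)+r+\Znn$, which then gives \eqref{bzeroalg} — fills in the same gaps in the same way; the only cosmetic slip is that you phrase the degree bound as a real inequality ``$d\ge\cdots$'' before correctly restating it as coset membership, and for complex $g$ only the coset version is meaningful.
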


Here $n\geqc k m$ means $n-m\in k+\Znn$ ($k\in\CC$).
For $V=W_L$, spanning set \eqref{basisfalg}
is a linear basis \cite{Geog,MP,Kaw15}.

Note that if $g$ is rational, then we may replace the $\ZZ$-grading in the definition of $\cG(g)$ to a smaller grading and the results of this paper  hold. 
For example, if $g$ is an integer then we may consider 
the $\ZZ/2\ZZ$-grading and if $g$ is even then the $0$-grading suffices.

\subsection{The free generalized vertex algebras}\label{subsecfreegva}
An initial object $F(g)$ in $\cG(g)$ is a {\em free GVA} freely generated by a single generator $b$. 
It is unique up to isomorphisms being an initial object and the existence is known in \cite{Kaw15,R}.
If $g\in\ZZ$, then $F(g)$ is a vertex (super)algebra, called a 
{\em free vertex algebra}.

By the initiality, we have a surjective morphism $F(g)\ra W_L$. Since \eqref{basisfalg} is a basis in $W_L$, the surjection is an isomorphism and vectors \eqref{basisfalg} form a combinatorial basis $\cB$ in $F(g)$.
In particular, the generator $b\in F(g)$ has  the locality order $N_0=-g$:
\begin{equation}\label{loce2}
b(-g-1)b\neq 0,\quad b(-g+n)b=0\quad \mbox{for all }n\in \CC\setminus\Zn.
\end{equation}

The {\em character} of $F(g)$ is the series 
$\chi_g(z,q)=\sum_{r\geq0,d\in \CC} \dim(F(g)_{r,d})z^rq^d$,
where the bigrading $F(g)=\bigoplus_{r,d} F(g)_{r,d}$ is defined  by
the condition $b(-n_r)\cdots b(-n_1)\bm1\in F(g)_{r,n_1+\cdots+n_r}$.
It follows by the basis that we have the character formula $\chi_g(z,q)=F_g(zq^{-g/2},q)$, which is sometimes called a
  fermionic formula.

We may also  define a normalized bigrading and the associated character as follows.
Let $C$ be a non-zero complex number and $D$ a complex number.
We denote by $F(g;C,D)$ the algebra $F(g)$ with the normalized bigrading $F(g;C,D)=\bigoplus_{r,d}F(g)_{r,d}$ such that
$b(-n_r)\cdots b(-n_1)\bm1\in F(g)_{Cr,n_1+\cdots+n_r-r+rD}$.
We have the  character formula
$
\chi_g^{(\chb,\cwb)}(z,q)
=F_g(z^\chb q^{D-g/2},q).
$
We often employ $C=1$ and $D=g/2$, where the character coincides with $F_g(z,q)$.

\subsection{A construction of free GVAs}\label{subsecconstfreegva}
For the sake of completeness, we give a quick construction of a free GVA $F(g)$ in this section,
which is different from ones in \cite{Kaw15,R}.
Let $A=\CC\langle b(n)\,|\,n\in\CC\rangle$ be a free associative  algebra freely generated by the symbols $b(n)$ with $n\in\CC$.
We have the translation operator $\partial:A\ra A$  defined by $\partial(1)=0$ and 
$[\partial,b(n)]=-nb(n-1)$, where $b(n):A\ra A$ is the left multiplication.
Let $I$ be the $\partial$-invariant left ideal of $A$ generated by the truncation relation $b(-n_r)\cdots b(-n_2)b(-n_1)=0$ with $n_1,\ldots,n_r\in\CC$ such that
$n_1+n_2+\cdots+n_r\not\in g r(r-1)/2+r+\Znn.$
Consider the quotient left $A$-module $A/I$.
The operator $b(z)=\sum_{n\in\CC}b(n)z^{-n-1}\in \mathrm{End}(A/I)\{z\}$ is a translation covariant (parafermionic) field on $A/I$.
Let $J\subset A/I$ be the $\partial$-invariant submodule generated by the coefficients of the locality relation
\begin{equation*}
(z-w)^{-g}b(z)b(w)v-(w-z)^{-g}b(w)b(z)v=0\quad \mbox{with}\quad v\in A/I.
\end{equation*}
Note that all coefficients in the above relation are finite sums 
since $b(z)$ is a field.
Now, the quotient space $F(g)=(A/I)/J$ is the underlying vector space of our free GVA.
The vacuum vector $\bm1$ of $F(g)$ is the image of $1$ and we have the induced translation operator $\partial:F(g)\ra F(g)$.
The image of $b(-1)\in A$ is denoted by $b\in F(g)$.
We readily see that the induced operator $b(z)\in \mathrm{End}(F(g))\{z\}$ is a translation covariant, local field.
By the extension theorem (\cite[Theorem 3.2]{BK}), we have a unique GVA structure on $F(g)$ satisfying $Y(b,z)=b(z)$.

The GVA $F(g)$ just constructed is an initial object in $\cG(g)$ as follows.
Let $V$ be an object of $\cG(g)$.
Consider the  surjective linear map $\pi:A\ra V$ defined by
$\pi(b(n_1)\cdots b(n_r))= b(n_1)\cdots b(n_r)\bm1$.
It follows by \eqref{bzeroalg} and \eqref{loce} that it induces a surjection 
$\bar\pi:F(g)\ra V$.
The map $\bar\pi$ preserves the vacuum vector, translation operator and generator $b$.
We have $\bar\pi(Y(b,z)v)=Y(\bar\pi (b),z)\bar\pi(v)$ for any $v\in F(g)$.
Since $b\in F(g)$ generates $F(g)$, we have $\bar\pi(Y(u,z)v)=Y(\bar\pi (u),z)\bar\pi(v)$ for all $u,v\in F(g)$.
Therefore, the map
$\bar\pi$ is a morphism in $\cG(g)$, which completes the proof.

%

\section{Free representations and RR exact sequences}\label{sec:freemod}
In this section, we introduce the free representations $\M m$ and RR exact sequences among the free representations over free GVAs.
Let $g$ be a complex number.

\subsection{Free representations}
Let $V$ be an object of $\cG(g)$.
Let $m$ be a complex number.
A {\em free GV module}  of order $-m$ over $V$
is a $\ZZ$-graded cyclic module $\M m=\bigoplus_{n\in\ZZ} \M m^n$ freely generated by
a formal element $v_m\in \M m^0$ such that 
\begin{equation}\label{mmvacum}
b(-m+n)v_m=0 \quad\mbox{for all} \quad n\in\CC\setminus \ZZ_{<0}.
\end{equation}
Here the associated biadditive map $\Delta_1:\ZZ\times \ZZ\ra \CC/\ZZ$ is defined by 
$\Delta_1(r,s)=-m-grs+\ZZ$.
In particular, if $V$ is a vertex algebra and $m\in\ZZ$, then $\M m$ is a usual vertex algebra module.
More precisely, a free GV module $\M m$ is a universal module
in the sense that if $M$ is a 
cyclic $\ZZ$-graded $V$-module generated by a
vector $v_m\in M^0$ satisfying \eqref{mmvacum}, then there is a unique surjective homomorphism $\M m\ra M$ sending $v_m$ to $v_m$.

The following proposition is proved in the same way as \Cref{propalg}.

\begin{proposition}\label{propmodgen}
Let $M$ be a $V$-module as in the above universality statement.
The following monomials span $M$:
\begin{equation}\label{basisfm}
b(-n_r)\cdots b(-n_1)v_m\quad (r\geq0, 
n_r\geqc g n_{r-1}\geqc g \cdots\geqc g n_1\geqc m 1).
\end{equation}
Furthermore,
\begin{equation}\label{bzero}
b(-n_r)\cdots b(-n_1)v_m=0\quad \mbox{if }
n_1+\cdots+n_r\not\in \frac g2 r(r-1)+mr+r+\Znn.
\end{equation}
\end{proposition}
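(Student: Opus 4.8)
The plan is to mimic exactly the argument sketched for \Cref{propalg}, but now tracking the extra parameter $m$ coming from the generator $v_m$. Since $M$ is cyclic and generated by $v_m$, and $V$ is generated by $b$, the space $M$ is spanned by monomials $b(-n_r)\cdots b(-n_1)v_m$ with $n_1,\ldots,n_r\in\CC$ (using the vacuum axiom to write any action $Y(u,z)v_m$ in terms of such monomials via the fact that $V$ itself is spanned by $b(-k_s)\cdots b(-k_1)\bm1$ from \Cref{propalg}, plus the Borcherds identity to move all $b(n)$'s to act directly on $v_m$). The first task is to reduce to the range stated in \eqref{basisfm}. First I would use \eqref{mmvacum}: any monomial with $n_1\leq 0$, i.e.\ with $b(-n_1)$ having $-n_1\geq 0$ in the ``non-annihilating'' tail, can be truncated, more precisely $b(-n_1)v_m=0$ unless $-n_1\in -m+\Zn$, i.e.\ unless $n_1\in m+\Znn$, which is exactly $n_1\geqc m 1$ once we also incorporate the bigrading parity constraint. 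Wait — more carefully: \eqref{mmvacum} says $b(-m+n)v_m=0$ for $n\notin\Zn$, i.e.\ $b(k)v_m=0$ whenever $k\geq -m$; equivalently $b(-n_1)v_m=0$ unless $-n_1<-m$, i.e.\ $n_1>m$. So after applying this the innermost index satisfies $n_1\geqc m 1$ (an integer shift of $m$, forced since the grading on $\M m$ is integral).

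The second, and main, step is the ordering: given a monomial $b(-n_r)\cdots b(-n_1)v_m$ where two adjacent indices violate $n_{i+1}\geqc g n_i$, I would apply the fundamental recursion \eqref{lemfund} (valid in any object of $\cG(g)$, hence in $V$, hence as an identity of operators on $M$) to the pair $b(-n_{i+1})b(-n_i)$ sitting inside the word. This rewrites that pair as an infinite-but-locally-finite sum of pairs $b(t-g-j)b(s+g+j)$ and $b(s-1-j)b(t+1+j)$ with $s=-n_i$, $t=-n_{i+1}$; on the vector being acted on (which lies in a fixed $\Znn$-graded piece by weight considerations) only finitely many terms survive, and each surviving term either has its two indices now in the required ``difference $g$'' relation, or has strictly smaller total weight, or fewer $b$'s — so an induction on the pair (number of $b$'s, total weight) in a suitable well-ordering terminates. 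This is the standard Roitman-type straightening argument and is the place where one must be slightly careful that the induction is well-founded; I expect this bookkeeping to be the main (though routine) obstacle. One must also observe that the new innermost index stays $\geqc m 1$ after straightening — which follows because \eqref{lemfund} never decreases the innermost index below the old one in a way that escapes $m+\Znn$, or more robustly because after all straightening one re-applies the truncation \eqref{mmvacum} at the innermost slot.

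Finally, for \eqref{bzero}, I would observe that each $b(-n_i)$ raises the weight (conformal-type bigrading of \Cref{propmodgen}, i.e.\ the $d$-grading with $v_m$ having $d=0$) by $n_i$ and the $r$-grading by $1$, while the straightening relation \eqref{lemfund} preserves both gradings and the relation \eqref{mmvacum} kills anything supported outside the allowed $d$-values. Combining the basis-monomial constraint $n_r\geqc g\cdots\geqc g n_1\geqc m 1$: writing $n_i=m+1+(i-1)g+\text{(non-negative integer)}$ for the minimal allowed configuration gives $n_1+\cdots+n_r\in \frac g2 r(r-1)+(m+1)r+\Znn=\frac g2 r(r-1)+mr+r+\Znn$, and every other allowed monomial only adds non-negative integers to this, so any monomial whose total weight lies outside $\frac g2 r(r-1)+mr+r+\Znn$ must vanish after straightening. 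Since the straightening in the previous step expresses an arbitrary monomial of a given $(r,d)$ as a combination of basis monomials of the same $(r,d)$, and there are none when $d\notin\frac g2 r(r-1)+mr+r+\Znn$, the original monomial is zero. This completes the proof; I'd remark that it is verbatim the proof of \Cref{propalg} with $\bm1$ replaced by $v_m$ and the shift ``$\geqc 0 1$'' replaced by ``$\geqc m 1$''.
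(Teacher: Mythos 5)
Your proof is correct and follows the paper's approach essentially verbatim: the paper disposes of \Cref{propmodgen} by stating it ``is proved in the same way as \Cref{propalg}'', and that proof is exactly the span-by-monomials, truncate-via-\eqref{mmvacum}, straighten-via-\eqref{lemfund}, weight-count argument you give (including the remark that it is \Cref{propalg} with $\bm1$ replaced by $v_m$ and $\geqc 0 1$ replaced by $\geqc m 1$). The one imprecision is your proposed termination measure: \eqref{lemfund} preserves both the total weight $\sum n_i$ and the length $r$, so ``strictly smaller total weight'' and ``fewer $b$'s'' cannot drive the induction; the actual well-foundedness comes from using \eqref{mmvacum} to bound the inner index below (so each fixed $(r,d)$-piece involves only finitely many tuples) together with a strictly decreasing sortedness measure such as $\sum_i (r-i)n_i$ -- a routine point that the paper also leaves implicit.
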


Consider the submodule $I$ of $\M m$ generated by the element
$w=b(-m-1)v_m$.
We see from \eqref{bzero} that $b(-m-g+n)w=0$ for any
$n\not\in\CC\setminus \Zn$.
It follows that there is a surjective homomorphism $\M{m+g}\ra I$
sending $v_{m+g}$ to $w$.
Consider the quotient module $\M m/I$.
Since $b(-m-1)[v_m]=[w]$ is zero in $\M m/I$, we have an isomorphism 
$\M m/I\cong \M{m+1}$ sending $[v_m]$ to $v_{m+1}$ by the universality.
As a result, we have an exact sequence of the form
\begin{equation}\label{fundes}
\M {m+g}\xrightarrow\iota \M m\xrightarrow\pi \M{m+1}\ra 0.
\end{equation}
We will show in the following that $\iota$ is injective if $V=F(g)$.

\subsection{Free representations over $F(g)$}\label{subsecfreefg}
Let $m$ be a complex number and consider a free GV module
$\M m$ over $F(g)$.


%

The following proposition may be proved in the same way as the assertion that \eqref{basisfalg} form a basis of $F(g)$
(see \Cref{secps} for more details).

\begin{proposition}\label{propmod}
The monomials \eqref{basisfm} form a basis $\cB(m)$ of the free $F(g)$-module $\M m$.
\end{proposition}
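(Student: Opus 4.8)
The plan is to establish that the spanning set \eqref{basisfm} is linearly independent in the free module $\M m$ over $F(g)$, given that we already know from \Cref{propmodgen} that it is a spanning set. Since $F(g)$ is an initial object, $\M m$ is the universal module, so it suffices to construct \emph{some} $F(g)$-module $M$ generated by a vector satisfying \eqref{mmvacum} in which the images of the monomials \eqref{basisfm} are linearly independent; the universal surjection $\M m \to M$ then forces independence upstairs. The natural candidate for $M$ is a lattice-theoretic model, exactly parallel to the way \eqref{basisfalg} is shown to be a basis of $F(g) \cong W_L$: namely a lattice principal subspace module $W_L(m\beta^\circ)$ sitting inside an appropriate module over the lattice GVA $V_L$, where $L$ is the rank one lattice of square norm $g$. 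This is precisely the identification promised in \Cref{secps}, so I would phrase the proof as a forward reference to that construction and to the known basis results \cite{Geog,MP,Kaw15}.

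First I would recall that $W_L$ carries the combinatorial basis $\cB$ given by \eqref{basisfalg}, and that the operator $Y(b,z) = e^{\beta^\circ}(z)$ (a vertex operator attached to a suitable element $\beta^\circ$) acts on the $V_L$-module generated by $e^{m\beta^\circ}$. The submodule $W_L(m\beta^\circ)$ generated by $e^{m\beta^\circ}$ under the action of the modes $b(n)$ is then a cyclic module whose generator satisfies the truncation \eqref{mmvacum} — this follows from the lattice commutation relations, since $b(n) e^{m\beta^\circ} = 0$ once $n$ exceeds the pairing bound determined by $\langle \beta^\circ, m\beta^\circ\rangle$. Second, by universality there is a surjection $\M m \to W_L(m\beta^\circ)$ sending $v_m \mapsto e^{m\beta^\circ}$, carrying the monomials \eqref{basisfm} onto the corresponding monomials in $W_L(m\beta^\circ)$. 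Third, I would invoke the known fact that these lattice monomials — which satisfy exactly the difference-$g$ condition $n_r \geqc{g} \cdots \geqc{g} n_1 \geqc{m} 1$ together with the weight constraint — form a linear basis of $W_L(m\beta^\circ)$; this is the principal-subspace analogue of the Georgiev/Milas–Penn basis theorem. Combining, the surjection $\M m \to W_L(m\beta^\circ)$ sends a spanning set to a basis, hence is an isomorphism and the spanning set \eqref{basisfm} is itself a basis $\cB(m)$ of $\M m$.

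The main obstacle I anticipate is not conceptual but bookkeeping: making the correspondence between the abstract difference conditions in \eqref{basisfm} and the lattice-combinatorial "initial conditions" completely precise, including the shift by $m$ in the innermost inequality $n_1 \geqc{m} 1$, which on the lattice side reflects the pairing of $\beta^\circ$ with the twisted ground state $e^{m\beta^\circ}$ and the resulting change in where the vertex operator modes start annihilating. One must check that the weight grading \eqref{bzero}, which already kills monomials whose total degree lies outside $\tfrac g2 r(r-1) + mr + r + \Znn$, matches exactly the weight support of $W_L(m\beta^\circ)$, so that no spurious relations can appear. A secondary point is ensuring that for non-integral $g$ (and non-integral $m$) the lattice GVA $V_L$ and its module are the \emph{generalized} vertex algebra versions from \cite{Kaw15}, so that the phase $\eta_{r,s} = e^{\pi\sqrt{-1}grs}$ and the biadditive map $\Delta_1$ are consistent on both sides; once that is in place the argument is uniform in $g \in \CC$ and $m \in \CC$. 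I would therefore keep the proof short, stating that it proceeds "in the same way as the proof that \eqref{basisfalg} is a basis of $F(g)$," and defer the lattice details to \Cref{secps}, exactly as the excerpt's phrasing already suggests.
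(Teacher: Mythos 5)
Your proposal matches the paper's own argument almost verbatim: identify $F(g)\cong W_L$, use the universality of $\M m$ to obtain a surjection $\chi_m:\M m\twoheadrightarrow W_L(m\beta^\circ)$ sending $v_m\mapsto e^{m\beta^\circ}$, invoke the known combinatorial basis theorem for lattice principal subspaces \cite{Geog,MP,Kaw15} to see that the images of the monomials \eqref{basisfm} are linearly independent, and conclude that the spanning set \eqref{basisfm} is in fact a basis of $\M m$. One notational slip to correct: you wrote $Y(b,z)=e^{\beta^\circ}(z)$, but the identification is $b\leftrightarrow e^{\beta}$ (with $(\beta,\beta)=g$, giving locality order $-g$), while the cyclic generator of the module is $e^{m\beta^\circ}$ with $\beta^\circ=\beta/g$; the truncation condition comes from $(\beta,m\beta^\circ)=m$, giving $e^{\beta}(-m+n)e^{m\beta^\circ}=0$ for $n\in\CC\setminus\Zn$, which is exactly \eqref{mmvacum}. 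If one literally took $b=e^{\beta^\circ}$ the locality order would be $-1/g$ rather than $-g$ and the pairing would produce the wrong truncation, so the distinction matters even though the overall plan is the paper's.
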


The combinatorial basis $\cB(m)$  of $\M m$ implies that $b(-m-1)v_m\neq 0$.
Note that we have $\M0\cong F(g)$ as $F(g)$-modules.

The {\em character} of $\M m$ is the series
$\chi_{g;m}(z,q)=\sum_{r,d}\dim(\M m_{r,d})z^rq^d$,
where the bigrading is defined by the condition that
$b(-n_r)\cdots b(-n_1)v_m\in \M m_{r,n_1+\cdots+n_r}$.
We have $\chi_{g;m}(z,q)=F_g(zq^{1-g/2+m},q)$.
Let us write $\M m_{(r)}=\bigoplus_{d\in\CC}\M m_{r,d}$.
We have 
\begin{align}\label{fmcharge}
\M m=\bigoplus_{r\in\Znn}\M m_{(r)},
\quad \M m_{(0)}=\CC v_m.
\end{align}
By considering a relation between grading \eqref{fmcharge} and  the charge grading of lattice GVAs, we  call \eqref{fmcharge} a {\em charge grading} on $\M m$.
We say that $v_m\in \M m_{(0)}$ is a unique {\em lowest charge vector} of $\M m$
up to scalar multiples.

We may also define a normalized bigrading and the associated character as follows.
Consider the algebra $F(g;C,D)=F(g)$ with the normalized bigrading as in \Cref{subsecfreegva} with $C\in \CC^\times$ and $D\in\CC$.
Let $S$ and $E$ be complex numbers.
We denote by $\M {m;S,E}$ the $F(g;C,D)$-module $\M m$ 
equipped with the normalized bigrading $\M{m;S,E}=\bigoplus_{r,d}\M m_{r,d}$ such that
$b(-n_r)\cdots b(-n_1)v_m\in \M m_{S+Cr,E+d}$ with $d=n_1+\cdots+n_r-r+rD$.
The associated character $\chi_{g;m}^{(C,D;S,E)}(z,q)$ coincides with
$
z^S q^E F_g(z^\chb q^{D-g/2+m},q).
$


\subsection{The RR exact sequences}\label{subsecrr}


We have the following  {\em RR exact sequence} for free GV modules over free GVAs.
Let $g,m,\cwb,\chm,\cwm$ be complex numbers and $\chb$ a non-zero number.

\begin{theorem}\label{prop:fund}
There is a short exact sequence
$$
0\ra \M {m+g}\xrightarrow\iota \M m\xrightarrow\pi \M {m+1}\ra 0
$$
of $F(g)$-modules,
where the morphisms are given by $\iota(v_{m+g})=v_m$ and $ \pi(v_m)=v_{m+1}$.
It may be seen as a grading preserving exact sequence
$$
0\ra \M{m+g;\chm+\chb,\cwm+m+\cwb}\ra \M{m;\chm,\cwm}\ra \M{m+1;\chm,\cwm}\ra 0
$$
of bigraded $F(g;\chb,\cwb)$-modules.
\end{theorem}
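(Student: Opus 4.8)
The plan is to take the right-exact sequence \eqref{fundes} for granted — it has already been established for every object of $\cG(g)$, with $\pi$ the quotient map $\M m\to\M m/I\cong\M{m+1}$ and $\iota$ the composite $\M{m+g}\twoheadrightarrow I\hookrightarrow\M m$ that sends $v_{m+g}$ to $b(-m-1)v_m$ — so that, for $V=F(g)$, everything reduces to showing that $\iota$ is injective, plus a purely formal upgrade to bigraded modules.

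To prove injectivity I would use the monomial basis of \Cref{propmod}. Since $\iota$ is a module homomorphism, on basis monomials it acts by
\[
\iota\bigl(b(-n_r)\cdots b(-n_1)v_{m+g}\bigr)=b(-n_r)\cdots b(-n_1)\,b(-m-1)\,v_m ,
\]
and the key point is that the right-hand side is again one of the monomials \eqref{basisfm} for $\M m$: if $b(-n_r)\cdots b(-n_1)v_{m+g}$ lies in the basis $\cB(m+g)$ then $n_r\geqc g \cdots\geqc g n_1\geqc{m+g} 1$, hence $n_1\geqc g m+1$, and since $m+1\geqc m 1$ the enlarged index string $(n_r,\dots,n_1,m+1)$ satisfies exactly the inequalities defining $\cB(m)$, the extra innermost operator being $b(-m-1)$. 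The index-string map $(n_r,\dots,n_1)\mapsto(n_r,\dots,n_1,m+1)$ is injective, so $\iota$ identifies $\cB(m+g)$ with the sub-collection of $\cB(m)$ whose monomials end in $b(-m-1)$. A linear map carrying a basis to a linearly independent set is injective, so $\iota$ is injective and \eqref{fundes} becomes the asserted short exact sequence.

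For the bigraded statement I would then match bidegrees. In the $(\chb,\cwb)$-normalization the vector $b(-m-1)v_m$ has charge $\chb$ and conformal weight $m+\cwb$, and more generally the index-string map above shifts the bidegree of every monomial by $(\chb,\,m+\cwb)$; thus $\iota$ is bigrading preserving as a map $\M{m+g;\chm+\chb,\cwm+m+\cwb}\to\M{m;\chm,\cwm}$, while $\pi$ is the bigrading-preserving quotient onto $\M{m+1;\chm,\cwm}$. Together with the injectivity already obtained, this yields the bigraded exact sequence.

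The whole argument rests on \Cref{propmod}, whose proof is deferred to \Cref{secps}; granting that, the only thing demanding care is the bookkeeping of the shifted-coset inequalities and the bidegree shift. As an independent check — and an alternative route to injectivity — one may compare characters: right-exactness of \eqref{fundes} gives $\chi_{g;m}(z,q)=\chi(\im\iota)+\chi_{g;m+1}(z,q)$, while substituting $z\mapsto zq^{1-g/2+m}$ into the recursion \eqref{genrrbasic} yields $\chi_{g;m}(z,q)-\chi_{g;m+1}(z,q)=zq^{1+m}\chi_{g;m+g}(z,q)$; since $\im\iota$ is a surjective image of $\M{m+g}$ under a map raising charge by $1$ and weight by $m+1$, its character is coefficientwise at most $zq^{1+m}\chi_{g;m+g}(z,q)$, with equality precisely when that map is injective, so the coincidence of the two computations of $\chi(\im\iota)$ forces injectivity of $\iota$ (this route additionally uses finite-dimensionality of the graded pieces, valid for the relevant $g$ and $m$).
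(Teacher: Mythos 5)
Your proposal is correct and follows essentially the same route as the paper: starting from the right-exact sequence \eqref{fundes}, one checks via \Cref{propmod} that $\iota$ sends each basis monomial $b(-n_r)\cdots b(-n_1)v_{m+g}\in\cB(m+g)$ to the basis monomial $b(-n_r)\cdots b(-n_1)b(-m-1)v_m\in\cB(m)$, so $\iota$ carries a basis injectively into a basis and is therefore injective, the bigraded statement being bookkeeping (note the theorem's displayed formula $\iota(v_{m+g})=v_m$ is a typo for $\iota(v_{m+g})=b(-m-1)v_m$, which is what both you and the paper's proof actually use). Your supplementary character-comparison argument is a valid independent check but is not needed and is not the paper's method.
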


\begin{proof}
We see that $\iota$ in \eqref{fundes} sends the basis $\cB(m+g)$ of $\M {m+g}$ injectively
to $\cB(m)$. Namely, the image of $b(-n_r)\cdots b(-n_1)v_{m+g}\in \cB(m+g)$  is 
the element
$b(-n_r)\cdots b(-n_1)b(-m-1)v_m$ of $\cB(m)$.
Thus, $\iota$ is an injection.
The rest is clear.
\end{proof}

We now have a recursion of the characters from the second statement in 
\Cref{prop:fund}:
$$
F_g(z^\chb q^{m+\cwb-g/2},q)=F_g(z^\chb q^{m+\cwb-g/2+1},q)+z^\chb q^{m+\cwb}F_g(z^\chb q^{m+\cwb+g/2},q),
$$
 where the common factor $q^Ez^\chm$ in the characters is  divided out from the formula.
By replacing $z^\chb q^{m+\cwb-g/2}$ with $z$, we recover 
\eqref{genrrbasic}.

\section{Finite RR exact sequences}\label{secfin}
In this section, we introduce a finite-dimensional vertex algebra 
$\fda kp$
and consider their free modules $\fd km$. We construct  RR exact sequence \eqref{finrrintro}, which lifts \eqref{finrrbasic}.

\subsection{The finite-dimensional vertex algebra $\fda kp$}
Let $p$ be a positive integer and consider a free vertex algebra 
$F(p)$ generated by a free generator $a\in F(p)^1$.
Let $k$  be an integer.
Consider a quotient vertex algebra
$\fda kp=F(p)/\sum_{n\geq 0}a(-k-n)F(p)$.

Note that since $F(p)$ is a commutative vertex algebra, we may say that
$\fda kp$ is freely generated by an element $a$ such that
$a(-m+n)a=0$ and $a(-k-n)a=0$ for any $n\geq0$.

We see that the following monomials form a basis 
$\cB_k$ of $\fda kp$:
\begin{equation}\label{eqn:basisfda}
a(-n_r)\cdots a(-n_1)\bm1\quad (r\geq0, n_1,\ldots,n_r\in\ZZ 
\mbox{ such that }
k\geqc 1 n_r\geqc p n_{r-1}\geqc p \cdots\geqc p n_1\geqc 0 1),
\end{equation}
(see \Cref{subseccoinvva} for a proof.)
In particular, if $k\leq 1$ then $\fda kp$ is a one-dimensional trivial vertex algebra $\CC\bm1$.

For example, let us consider $p=2$ and write $a=e$. Then we have the following bases of $\fda 02, \fda 12, \ldots, \fda 52$,
respectively:
\begin{align*}
&\{\bm1\},\quad \{\bm1\}, \quad \{\bm1,e(-1)\bm1\}, 
 \quad \{\bm1,e(-1)\bm1,e(-2)\bm1\}, \quad
  \{\bm1,e(-1)\bm1,e(-2)\bm1,e(-3)\bm1,e(-3)e(-1)\bm1\}, \\
 & \{\bm1,e(-1)\bm1,e(-2)\bm1,e(-3)\bm1,e(-3)e(-1)\bm1,
 e(-4)\bm1,e(-4)e(-1)\bm1,e(-4)e(-2)\bm1\}.
\end{align*}
One notices that the number of elements of each bases are Fibonacci numbers, which will be considered in later sections.

The bigrading and character  $\chi_{k,p}(z,q)$ of $\fda kp$ are defined in the same way as those of  $F(p)$. We have
$\chi_{k,p}(z,q)=1$ if $k\leq 1$.
Suppose $k\geq 2$ and let $k$ and $d$ be non-negative integers.
Since \eqref{eqn:basisfda} belongs to $\fda kp_{r,n_1+\cdots+n_r}$, the dimension of  $\fda kp_{r,d}$ is counted as a restricted partition number.
More precisely,
\begin{align*}
&\dim(\fda kp_{r,d})
=
\#\{(n_1,\ldots,n_r)\,|\,n_1+\cdots+n_r=d,
k\geqc 1 n_r\geqc p n_{r-1}\geqc p \cdots\geqc p n_1\geqc 0 1\}
\\
&\quad=
\#\Biggl\{(m_1,\ldots,m_r)\,\Biggl|\,
\begin{aligned}
&m_1+\cdots+m_r=d-r-\frac {pr}2(r-1),\\
&k-p(r-1)-1\geq m_r\geq m_{r-1}\geq \cdots\geq m_1\geq 0 
\end{aligned}
\Biggr\}.
\end{align*}
It implies that $\sum_{d\geq 0}\dim(\fda kp_{r,d})q^d=
q^{pr^2/2-pr/2+r}\qbinom {k+p-2-(p-1)r}r_q$ for any $r\geq0$ (see, e.g., \cite{A}).
Here, $\qbinom nr_q$ is the $q$-binomial coefficient
$$
\displaystyle\qbinom nr_q=
\begin{cases}
\frac{(q)_n}{(q)_r(q)_{n-r}}& (n\geq r\geq0),\\
0&(\mbox{otherwise}).
\end{cases}
$$ 
As a result, we have $\chi_{k,p}(z,q)=F_{p,k+p-2}(zq^{1-p/2},q)$.

Let $C$ be a non-zero complex number and $D$ a complex number.
We have a normalized bigrading $\fda kp=\bigoplus_{r,d}\fda kp_{r,d}$ on $\fda kp$ such that 
\eqref{eqn:basisfda} belongs to $\fda kp_{Cr,n_1+\cdots+n_r-r+Dr}$.
If we equip $\fda kp$ with this bigrading, we write $\fda kp=\fda k{p;C,D}$.
We have the normalized associated character 
$\chi_{k,p}^{(C,D)}(z,q)=F_{p,k+p-2}(z^Cq^{D-p/2},q)$.

\subsection{Free representations over $\fda kp$}\label{subsecfreefda}
Let $m$ be an integer and $\fd km$ denote the free representation of order $-m\in\ZZ$ over $\fda kp$ freely generated by $v_{k,m}$.
It has a combinatorial basis $\cB(k,m)$ consisting of monomials of the form
\begin{equation}\label{eqn:basisfd}
a(-n_r)\cdots a(-n_1)v_{k,m}\quad (r\geq0, n_1,\ldots,n_r\in\ZZ 
\mbox{ such that }
k\geqc 1 n_r\geqc p n_{r-1}\geqc p \cdots\geqc p n_1\geqc 0 m+1),
\end{equation}
(see \Cref{subseccoinvva} for a proof.)
In particular, the character of $\fd km$ has the form
$$
\chi_{k,p;m}(z,q)=\begin{cases}
1&(k\leq m+1)\\
F_{p,k+p-2-m}(zq^{1-p/2+m},z)&(k\geq m+2)
\end{cases}.
$$

Consider $\fda kp=\fda k{p;C,D}$ with a normalized bigrading.
Let $S$ and $E$ be complex numbers.
We write $\fd km=\fd k{m;S,E}$ if we equip $\fd km$ the bigrading such that \eqref{eqn:basisfd} belongs to 
$\fd km_{S+Cr, E+n_1+\cdots+n_r-r+Dr}$.
The associated character has the form
$\chi_{k,p;m}^{(C,D;S,E)}(z,q)=z^S q^E F_{p,k+p-2-m}(z^Cq^{D-p/2+m},q)$.

\subsection{RR exact sequences for $\fd km$}\label{subsecfinrr}
We now have RR exact sequences for $\fd km$.

\begin{proposition}\label{propfibrr}
If $k\geq m+2$, then there is an exact sequence of the form
\begin{align}\label{fibrr}
0\ra \fd k{m+p}\xrightarrow\iota \fd km\xrightarrow\pi \fd k{m+1}\ra0,
\end{align}
with the morphisms uniquely determined by
$\iota(v_{k,m+p})=a(-m-1)v_{k,m}$ and $\pi(v_{k,m})=v_{k,m+1}$.
It may be seen as a grading preserving exact sequence
\begin{align*}
&0\ra \fd k{m+p;\chm+\chb,E+m+\cwb}\ra \fd k{m;\chm,\cwm}
 \ra \fd k{m+1;\chm,\cwm}\ra 0,
\end{align*}
among bigraded $\fda k{p;C,D}$-modules.
\end{proposition}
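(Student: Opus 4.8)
The plan is to adapt, essentially verbatim, the argument behind \Cref{prop:fund}: realise the sequence through the submodule of $\fd km$ generated by $w:=a(-m-1)v_{k,m}$ and exploit the combinatorial bases of \Cref{subsecfreefda}. First note that $\fda kp$, being a quotient of $F(p)$, is an object of $\cG(p)$, so $\fd km$ is a free GV module over an object of $\cG(p)$ and the vanishing relation \eqref{bzero} of \Cref{propmodgen} is available with $g=p$. I would use it to check that $w$ satisfies the order $-(m+p)$ condition, i.e.\ $a(j)w=0$ for all $j\ge-(m+p)$: the monomial $a(j)w=a(j)a(-m-1)v_{k,m}$ is, in the notation of \eqref{bzero}, the $r=2$ monomial with $n_1=m+1$ and $n_2=-j$, and $n_1+n_2=(m+1)-j\le 2m+p+1<p+2(m+1)$ forces it to vanish. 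Hence, by the universality of $\fd k{m+p}$, there is a unique morphism $\iota\colon\fd k{m+p}\to\fd km$ sending $v_{k,m+p}$ to $w$, and its image is the submodule $I$ generated by $w$.

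Next I would identify the quotient $\fd km/I$ with $\fd k{m+1}$. In $\fd km/I$ the class $[v_{k,m}]$ is cyclic, and $a(j)[v_{k,m}]=0$ for $j\ge-m$ (the order of $v_{k,m}$) together with $a(-m-1)[v_{k,m}]=[w]=0$ shows that $[v_{k,m}]$ satisfies the order $-(m+1)$ condition; universality of $\fd k{m+1}$ then gives a surjection $\fd k{m+1}\twoheadrightarrow\fd km/I$. Conversely, $v_{k,m+1}$ satisfies the order $-(m+1)$ condition, hence a fortiori the order $-m$ condition, so universality of $\fd km$ produces $\pi\colon\fd km\to\fd k{m+1}$ with $\pi(v_{k,m})=v_{k,m+1}$; since $\pi(w)=a(-m-1)v_{k,m+1}=0$, the morphism $\pi$ annihilates $I$ and descends to $\fd km/I\to\fd k{m+1}$. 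The two descended maps invert each other on the cyclic generators, hence are mutually inverse isomorphisms; thus $\pi$ is surjective with kernel $I$, i.e.\ $0\to I\to\fd km\xrightarrow{\pi}\fd k{m+1}\to0$ is exact.

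It remains to prove $\iota$ injective, and this is where the hypothesis $k\ge m+2$ enters. As in \Cref{prop:fund}, $\iota$ carries a basis vector $a(-n_r)\cdots a(-n_1)v_{k,m+p}$ of $\fd k{m+p}$ (so $k\geqc 1 n_r\geqc p\cdots\geqc p n_1\geqc 0 m+p+1$) to $a(-n_r)\cdots a(-n_1)a(-m-1)v_{k,m}$; putting $n_0:=m+1$, this equals $a(-n_r)\cdots a(-n_1)a(-n_0)v_{k,m}$ with $k\geqc 1 n_r\geqc p\cdots\geqc p n_1\geqc p n_0\geqc 0 m+1$, i.e.\ a basis vector of $\fd km$ — and here $k\ge m+2$ is exactly what is needed for the $r=0$ image $a(-m-1)v_{k,m}$ to be a (nonzero) basis vector. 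Since $(n_1,\dots,n_r)\mapsto(n_0,n_1,\dots,n_r)$ is injective, $\iota$ sends the basis of $\fd k{m+p}$ injectively into the basis of $\fd km$, hence is injective, and $\iota\colon\fd k{m+p}\xrightarrow{\sim}I$; splicing this with the exact sequence above yields \eqref{fibrr}. For the graded refinement it suffices to observe that in $\fd k{m;\chm,\cwm}$ over $\fda k{p;\chb,\cwb}$ the vector $w=a(-m-1)v_{k,m}$ lies in bidegree $(\chm+\chb,\cwm+m+\cwb)$, which is the shift by which $\fd k{m+p}$ must be regraded for $\iota$ to preserve the bigrading, while $\pi$ is grading preserving with no shift. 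The only step requiring genuine care is the first one — confirming that $w$ satisfies the order $-(m+p)$ condition — since once the surjection $\iota$ is in hand, its injectivity follows directly from the combinatorial bases, exactly as in \Cref{prop:fund}.
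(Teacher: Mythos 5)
Your proof is correct and takes essentially the same approach as the paper: the paper invokes the already-established complex \eqref{fundes} with $V=\fda kp$ and then proves injectivity of $\iota$ by the same basis-comparison argument you give, whereas you additionally re-derive \eqref{fundes} in detail (the order check on $w$ via \eqref{bzero} and the two universality arguments for $\fd km/I\cong\fd k{m+1}$), which is valid but was already covered by the general construction in Section~\ref{sec:freemod}.
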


\begin{proof}
We see that $\iota$ in \eqref{fundes} with $V=\fda kp$ sends the basis $\cB(k,m+g)$ of $\fd k {m+g}$ injectively
to $\cB(k,m)$. Namely, the image of $a(-n_r)\cdots a(-n_1)v_{k,m+g}\in \cB(k,m+g)$  is 
the element
$a(-n_r)\cdots a(-n_1)a(-m-1)v_{k,m}$ of $\cB(k,m)$.
Thus, $\iota$ is an injection.
The rest is clear.
\end{proof}

Now, by taking the characters of the latter sequence in 
\Cref{propfibrr}, we have the recursion relation
$$
F_{p,k+p-2-m}(z^\chb q^{\cwb-p/2+m},q)=
F_{p,k+p-1-m}(z^\chb q^{\cwb-p/2+m+1},q)
+z^\chb q^{m+\cwb}F_{p,k-2-m}(z^\chb q^{\cwb+p/2+m},q),
$$
 where the common factor $q^Ez^\chm$ in the characters is  divided out from the formula.
By replacing $z^\chb q^{m+\cwb-p/2}$ with $z$, we recover 
\eqref{genrrbasic} with $\ell=k-m-2$.

We remark that \eqref{fibrr} can be induced from \Cref{prop:fund} as follows.
We first note that $\fd km$ is a module over $F(p)$ and we have
an isomorphism $\fd km\cong \M m/J_k(\M m)$ as $F(p)$-modules, 
where $J_k(\M m)=\sum_{n\geq0}a(-n-k)\M m$.
 Since the morphisms in the RR exact sequence in  \Cref{prop:fund} are $F(p)$-module homomorphisms, they send the submodule $J_k(\M{m+p})$ into $J_k(\M m)$ and $J_k(\M m)$ into $J_k(\M {m+1})$.
Therefore, the complex of the form \eqref{fibrr} is induced.
By comparing the bases, we see that the complex is exact.

We remark that spaces with the same or related characters 
 appear in \cite{FKLMM,FF,Ke2} and others.

\section{Preliminaries on applications}\label{secpreapp}

We start applications of our RR exact sequences.
In this section, we recall necessary ingredients for our applications 
of RR exact sequences.

\subsection{Lattice generalized vertex algebras}\label{subseclattice}
Let $\ah=\CC h$ be a one-dimensional vector space equipped with 
a non-zero symmetric bilinear form $(\cdot,\cdot):\ah\ra \ah$.
Consider the associated Heisenberg Lie algebra $\hh=\ah\otimes_{\CC}\CC[t,t^{-1}]\oplus \CC K$ with the Lie bracket defined by linearly extending the assignments
$$
[h(n),h(m)]=n(h,h)\delta_{n+m,0}K,\quad [K,\hh]=0\qquad (n,m\in\ZZ).
$$
For any $\lam\in\CC$, we have a Fock representation $M(1,\lam)=U(\hh)\otimes_{U(\ah[t]\oplus \CC K)}\CC_\lam$, where $\CC_\lam$ is the one-dimensional $\ah[t]\oplus \CC K$-module defined by $h(0)=\lam\cdot\mathrm{id}$, $h(n)=0$ ($n> 0$) and $K=\id$.
We write $e^\lam=1\otimes 1\in M(1,\lam)$.

Consider the  $\ah$-graded module
$
V_\ah=\bigoplus_{\gamma\in\ah}V_\ah^\gamma$ with $V_\ah^\gamma=M(1,\gamma)
$ 
($\gamma\in\ah$)
over $\hh$ and equip it with the vacuum vector $\bm1= e^0$ and translation operator $\partial$ defined by $\partial\bm1=0$ and
$[\partial,h(n)]=-nh(n-1)$.
We equip $\ah$ with a phase $\eta$ defined by $\eta_{\gamma,\delta}=e^{\pi\sqrt{-1}(\gamma,\delta)}$ ($\gamma,\delta\in\ah$).
 The associated biadditive map has the form $\biadd(\gamma,\delta)=-(\gamma,\delta)+\ZZ$.
Then we have a unique GVA structure on $V_\ah$ by extending the assignment
$$
Y(e^\gamma,z)=\exp\left(\sum_{n\in\ZZ_{<0}}\frac{\gamma(n)}{-n}z^{-n}\right)
\exp\left(\sum_{n\in\ZZ_{>0}}\frac{\gamma(n)}{-n}z^{-n}\right)
  z^{\gamma(0)}e_{\gamma}\qquad (\gamma\in\ah),
$$
where $z^{\gamma(0)} e^\lam=z^{(\gamma,\lam)}e^\lam$ and $e_\gamma g e^\lam=ge^{\gamma+\lam}$ ($g\in U(\hh)$, $\gamma,\lam\in\ah$).
 It is called a GVA associated to the vector space $\ah$ \cite{BK}.

 In particular, we have
 \begin{equation}\label{loclat}
 e^\gamma(-(\gamma,\delta)-1)e^\delta=e^{\gamma+\delta},\quad
 e^\gamma(-(\gamma,\delta)+n)e^\delta=0\quad
 (n\in\CC\setminus\Zp),
 \end{equation}
 which implies that the locality order of  $(e^\gamma,e^\delta)$ 
 is $N_0=-(\gamma,\delta)$.
 
 For any subset $A\subset \ah$, we set $V_A=\bigoplus_{\gamma\in A} V_\ah^\gamma$.
 Suppose that $A\subset \ah$ is a rank one {\em lattice}.
Then the subspace $V_A$ is an $A$-graded GVA inside  $V_\ah$, called the {\em lattice GVA} associated to the rank one lattice $A$ \cite{DL}.
The grading 
$V_\ah=\bigoplus_{\lam\in\ah}(V_\ah)^\lam$ is called 
the {\em charge grading} of $V_\ah$.

Suppose that the rank one lattice $A=\ZZ\gamma$ is an integral lattice, equivalently, $(\gamma,\gamma)\in\ZZ$. 
 Then $V_A$ is a usual 
lattice vertex (super)algebra. 
We then consider the dual lattice $A^\circ=\ZZ\gamma^\circ$ of $A$ with $\gamma^\circ=\gamma/(\gamma,\gamma)$.
The inequivalent irreducible vertex algebra modules over $V_A$
are isomorphic to the submodules $V_{A+i\gamma^\circ}$ with $0\leq i\leq |(\gamma,\gamma)|-1$.

 \subsection{The Virasoro vector and bigradings}\label{subsecvir}
 Consider the element $\omega=\frac1{2(h,h)}h(-1)^2\bm1\in V_\ah^0$ and write
$Y(\omega,z)=\sum_{n\in \ZZ}L_nz^{-n-2}$. It induces
an action of the Virasoro algebra $\mathrm{Vir}=\bigoplus_{n\in\ZZ}\CC L_n\oplus\CC C$ on $V_\ah$.
We see that $C$ acts as 1 (central charge 1) and $L_{-1}=\partial$. 
Moreover, $L_0$ acts semisimply on $V_\ah$ so that we have the decomposition of $V_\ah$ by
{\em conformal weights}:
$
V_\ah=\bigoplus_{d\in\CC}(V_\ah)_d
$
with $
(V_\ah)_d=\{v\in V_\ah\,|\,L_0v=dv\}.
$
We see that
$$
L_0e^\gamma=\frac{(\gamma,\gamma)}2e^\gamma,\quad [L_0,h(n)]=-nh(n),\quad [L_0,e^\gamma(n)]=
\Bigl(-n-1+\frac{(\gamma,\gamma)}2\Bigr)e^\gamma(n).
$$
Since the $L_0$-grading is compatible with the charge grading, we  have a bigrading
$$
V_\ah=\bigoplus_{\gamma\in\ah,d\in\CC}(V_\ah)^\gamma_d,\quad
(V_\ah)^\gamma_d=V_\ah^\gamma\cap (V_\ah)_d.
$$
on $V_\ah$. 

For any rank one lattice $A\subset\ah$, the lattice GVA $V_A$  carries the induced gradings
$$
V_A=\bigoplus_{\gamma\in A}V_A^\gamma=\bigoplus_{d\in\CC}(V_A)_d
=\bigoplus_{\gamma\in A,d\in \CC}(V_A)^\gamma_d.
$$
The lattice GVA $V_A$ equipped with the Virasoro vector $\omega\in V_A$ is a {\em conformal} GVA.
If $A$ is a positive-definite integral lattice, then $V_A$ with $\omega$ is a {\em vertex operator algebra} (VOA).

\subsection{Restricted dual modules}\label{subsecdual}
In this section, we recall the notion of restricted dual modules.
Let $V=\bigoplus_{\gamma\in A}V^\gamma$ be a GVA
with a bigrading $V=\bigoplus_{r,d\in\CC}V_{r,d}$ such that the grading by $r$ refines the $A$-grading and that $V_{r,d}(n)V_{s,e}\subset V_{r+s,d+e-n-1}$
for any $r,s,d,e\in\CC$.
Suppose 
 that $V$ admits an action of the positive part  $\mathrm{Vir}_+=\bigoplus_{n=-1}^\infty \CC L_n$ of the Viraosoro algebra
such that $L_{-1}$ acts as $\partial$ and 
that $L_0$ acts as $d$ on $V_{r,d}$ for any $r,d$.
Let $M=\bigoplus_{\lam\in B}M^\lam$ be a $B$-graded $V$-module.
Suppose that $M$ admits a bigrading  $M=\bigoplus_{s,e\in\CC}M_{s,e}$ such that
 $V_{r,d}(n)M_{s,e}\subset M_{r+s,d+e-n-1}$.
Assume also that the grading by $s$ is a refinement of the $B$-grading,
so that we have a set $K_\lam\subset \CC$ for each $\lam\in B$
such that $M^\lam=\bigoplus_{s\in K_\lam,e\in\CC} M_{s,e}$.
Suppose that each bigraded piece $M_{s,e}$ is  finite-dimensional.
Consider the restricted dual space
$M^\vee=\bigoplus_{s,e}(M_{s,e})^*$ of $M$.
The space $M^\vee$ is $(-B)$-graded with $(M^\vee)^{-\lam}=\bigoplus_{s\in K_\lam,e\in\CC}(M_{s,e})^*$.
Then $M^\vee$ is a $V$-module with the transpose action 
$$
\langle Y^t(u,z)\varphi,v\rangle=\langle \varphi, Y(e^{zL_1}e^{\pi \sqrt{-1}L_0}z^{-2L_0}u,z^{-1})v\rangle\quad (u\in V, \varphi\in M^\vee, v\in M),
$$
called the {\em restricted dual module} of $M$ over $V$.
The module $M^\vee$ naturally carries a bigrading 
 with bigraded pieces  $(M^\vee)_{s,e}=(M_{-s,e})^*$ $(s,e\in\CC)$.
The associated characters of $M^\vee$ and $M$ are related as
$
\ch[M^\vee](z,q)=\ch[M](z^{-1},q),
$
where $\ch[M](z,q)=\sum_{s,e} \dim(M_{s,e})z^sq^e$.

Let us consider the restricted dual modules of  modules inside $V_\ah$.
We consider the $\ah$-grading on $V_\ah$ as a $\CC$-grading by $r$ in the above.
Let $V$ be a bigraded GVA inside $V_\ah$ closed under the positive part of the Virasoro action by $\omega$.
Let $M$ be a bigraded submodule of $V_\ah$ over $V$.
We then have the restricted dual module $M^\vee$.
Let $\gamma$ be an element of $\ah$ and suppose that $e^\gamma\in V$.
Since $e^\gamma$ is of conformal weight $d=(\gamma,\gamma)/2$ 
and $L_1e^\gamma=0$, we have 
\begin{equation}\label{contralattice}
 \langle {}^t(e^\gamma(n)) \varphi,v\rangle =\langle \varphi,
  e^{\pi\sqrt{-1}(\gamma,\gamma)/2} e^\gamma(-n+(\gamma,\gamma)-2)v\rangle
 \quad (\varphi\in M^\vee, v\in M, n\in\CC).
 \end{equation}
 
Consider $M=V_\ah$. Since $V_\ah$ admits a non-degenerate invariant bilinear form,
we have a $V$-module isomorphism $f:(V_\ah)^\vee\xrightarrow\sim V_\ah$ 
such that
$f({}^t(e^\gamma(n)) \varphi)= e^{\pi\sqrt{-1}(\gamma,\gamma)/2} e^\gamma(-n+(\gamma,\gamma)-2)f(\varphi)$.
Let  $B$ be a subset of $\ah$ and suppose that $V_B$ is a $V$-submodule of $V_\ah$.
We then have $(V_B)^\vee \cong V_{-B}$
as $V$-modules.

\subsection{Invariance and coinvariance construction}\label{subsecic}
We next introduce the invariance and coinvariance constructions.
For any $k\in\CC$ and series $u(z)=\sum_{n\in\CC}u_nz^{-n-1}$, we set 
$u(z)_{\geq k}=\sum_{n\geq 0}u_{k+n}z^{-k-n-1}$
and
$u(z)_{\leq k}=\sum_{n\geq 0}u_{k-n}z^{-k+n-1}$.
(We will consider the cases where $u_{k+ n}=0$ unless $n\in\ZZ$.)


Let $U$ and $W$ be vector spaces and
suppose that  $u(z)=\sum_{n\in\CC}u_nz^{-n-1}$ is an element of  $\mathrm{Hom}_\CC(U,W)\{z\}$.
The kernel  $\ker{u(z)}=\ker{u(z):U\ra W\{z\}}$ of $u(z)$  coincides with $\bigcap_{n\in\CC}\ker{u_n}$.
The cokernel of $u(z)$ is by definition 
$\coker{u(z)}=W/\sum_{n\in\CC}\mathrm{Im}(u_n)$.

Let $V$ be a GVA and $v$ a homogeneous element of $V$.
Let $U$ be a sub\,GVA of $V$ commuting with $\{v\}$.
Let $M$ be a $V$-module and $R$ a graded $U$-submodule of $M$.
 We often consider the $U$-submodule
$$
\ker{Y(v,z)_{\geq k}|_R}=\bigcap_{n\geq0}
\ker{v(k+n)|_R:R\ra M},
$$ 
which we denote by $I_{v,k}(R)=I_k(R)=\ker{Y(v,z)_{\geq k}|_R}$.

Let  $T$ be a quotient $U$-module of $M$ with the
canonical surjection $\pi:M\ra T$.
The cokernel 
$$
\coker{\pi\circ Y(v,z)_{\leq -k}}
=\frac T{\sum_{n\geq0} \pi\circ v(-k-n)M}
$$
 is a $U$-module. 
  We denote $C_{v,k}(T)=I_k(T)=\coker{\pi\circ Y(v,z)_{\leq -k}}$.

\section{Lattice principal subspaces and RR exact sequences}
\label{secps}
In this section, we recall lattice principal subspaces and compare our
exact sequence \eqref{ourintro-} with those in \cite{CLM03} and others.
We take over the notations $V_\ah$, $\omega$ and others from \Cref{subseclattice}--\Cref{subsecvir}.

\subsection{The vacuum lattice principal subspaces}\label{subsecps}

Let $g$ be a non-zero complex number.
Let $\beta$ be an element of $\ah$ such that $(\beta,\beta)=g$
and set $L=\ZZ\beta$.
Consider the $L$-graded sub\,GVA $W_L$ of $V_L$ (and hence of $V_\ah$) generated by  $e^\beta$, which we call a (generalized) vacuum lattice principal subspace \cite{MP,Kaw15}.
The space $W_L$ is a bigraded subspace of $V_L$ of the form
$W_L=\bigoplus_{r\geq0, d\in\CC}(W_L)^{r\beta}_d$,
where
$$
(W_L)^{r\beta}_d=(W_L)^{r\beta}\cap (W_L)_d,\quad
(W_L)^{r\beta}=W_L\cap (V_L)^{r\beta},\quad
(W_L)_{d}=W_L\cap (V_L)_{d}.
$$
Moreover, $W_L$ is closed under $L_n$ with $n\geq-1$ although 
the Virasoro vector $\omega$ is not included in $W_L$.

Since the locality order of $e^\beta$ is $-g$, the GVA $W_L$ is an object of
$\cG(g)$ if we identify $e^\beta$ with the indeterminate $b$ and 
consider the $L$-grading of $W_L$ as a $\ZZ$-grading 
by identifying $r\beta$ with $r$.
As explained in \Cref{subsecgva}, we have an isomorphism $F(g)\xrightarrow{\sim} W_L$.
We often identify $F(g)$ with $W_L$.

Consider the algebra $F(g)=F(g;1,g/2)$ equipped with the 
normalized bigrading such that $b\in F(g)_{1,g/2}$.
Then the isomorphism $F(g;1,g/2)\cong W_L$ 
sends each homogeneous piece
$F(g)_{r,d}$  to $(W_L)^{r\beta}_d$ $(r\geq 0,d\in\CC)$.

Let $r$ be a positive integer.
Since $e^\beta(-(\beta,\gamma)-1)e^\gamma=e^{\beta+\gamma}$,
it follows that 
$$
e^{r\beta}=e^\beta(-g(r-1)-1)\cdots e^\beta(-2g-1)e^\beta(-g-1)\bm1
$$
and therefore, we have the embedding $W_{rL}\subset W_L$.
It then follows that we have the embedding $F(r^2g)\subset F(g)$
with $b'=b(-g(r-1)-1)\cdots b(-2g-1)b(-g-1)\bm1$, where $b$ and $b'$ are the free generators of $F(g)$ and $F(r^2g)$, respectively.
In particular, if $p$ is a positive integer, then
$F(p)$ is naturally embedded in $F(1/p)$.

Note that even if $g=0$, we have the isomorphism $F(0)\cong W_{\ZZ\gamma}$ sending $b$ to $e^\gamma$, where $\ZZ\gamma$ is a lattice with 
the zero bilinear form (see \cite{R}).

\subsection{General lattice principal subspaces}
Let us take over the notations from the last section.
Consider the cyclic $W_L$-module $W_L(m\beta^\circ)=W_L\cdot e^{m\beta^\circ}\subset V_{L+m\beta^\circ}$ generated by $e^{m\beta^\circ}$ with $\beta^\circ=\beta/g$, which we call a 
(generalized) lattice principal subspace \cite{MP,Kaw15}.

Let us identify $F(g)$ with $W_L$
 and consider
a free GV module $\M m$ freely generated by $v_m$.
Since it follows from \eqref{loclat} that $e^\beta(-m+n)e^{m\beta^\circ}=0$ for any $n\in\CC\setminus\Zn$, we have a surjection $\chi_m:\M m\ra W_L(m\beta^\circ)$ sending $v_m$ to $e^{m\beta^\circ}$, by the universality of $\M m$.
Here we identify the $(m\beta^\circ+L)$-grading on $W_L(m\beta^\circ)$
with the defining $\ZZ$-grading on $\M m$ via $m\beta^\circ +r\beta\mapsto r$.
We know that elements of the form \eqref{basisfm} form a  basis of  $W_L(m\beta^\circ)$
\cite{Geog,MP,Kaw15}.
Moreover, \eqref{lemfund} implies that the elements \eqref{basisfm} in $\M m$ span $\M m$.
As a result, we have the following lemma.

\begin{lemma}\label{lemwm}
The map $\chi_m: \M m\ra W_L(m\beta^\circ)$ is an isomorphism.
\end{lemma}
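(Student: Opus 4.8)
The plan is to show that the surjection $\chi_m : \M m \to W_L(m\beta^\circ)$ is injective by a dimension-count argument carried out charge-by-charge. Both sides carry the charge grading indexed by $r \in \Znn$, and $\chi_m$ is charge-preserving, so it suffices to argue that $\chi_m$ is injective on each homogeneous piece $\M m_{(r)}$. By \Cref{propmodgen} (applied with $V = F(g)$), the monomials \eqref{basisfm} with exactly $r$ factors span $\M m_{(r)}$; hence $\dim \M m_{(r)}$ is at most the cardinality of the set of admissible index tuples $(n_1,\dots,n_r)$ with $n_r \geqc g n_{r-1} \geqc g \cdots \geqc g n_1 \geqc m 1$, refined by the internal weight grading $d = n_1 + \cdots + n_r$. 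On the other side, the cited results \cite{Geog,MP,Kaw15} assert that the images of precisely these same monomials form a \emph{basis} of $W_L(m\beta^\circ)$; in particular they are linearly independent. Since $\chi_m$ sends each spanning monomial of $\M m_{(r)}$ to the corresponding basis vector of $(W_L(m\beta^\circ))^{m\beta^\circ + r\beta}$, it carries a spanning set to a linearly independent set, which forces the spanning set to be a basis and $\chi_m$ to be an isomorphism on that piece.

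Concretely, the steps are: (i) recall that $\chi_m$ exists and is surjective by the universality of $\M m$, as already established in the text preceding the lemma; (ii) invoke \Cref{propmodgen} with $V = F(g)$ to get that \eqref{basisfm} spans $\M m$ (and does so compatibly with both the charge grading and the weight grading, by \eqref{bzero}); (iii) invoke the principal-subspace basis theorem of \cite{Geog,MP,Kaw15} to get that the images under $\chi_m$ of the monomials \eqref{basisfm} form a basis of $W_L(m\beta^\circ)$; (iv) observe that $\chi_m$ maps the monomial $b(-n_r)\cdots b(-n_1) v_m$ to $e^\beta(-n_r)\cdots e^\beta(-n_1) e^{m\beta^\circ}$, i.e.\ it matches the spanning set of the source with the basis of the target term by term; (v) conclude that, on each finite-dimensional bigraded piece, a spanning set of size $\le N$ maps onto a linearly independent set of size $N$, hence is itself linearly independent and $\chi_m$ is injective there; (vi) assemble the graded pieces to conclude $\chi_m$ is an isomorphism.

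The one genuinely substantive input — and the step I would flag as the main obstacle if it were not already available off the shelf — is (iii), the linear independence of the monomials \eqref{basisfm} inside $W_L(m\beta^\circ)$. That is exactly the statement that the Feigin--Stoyanovsky/Georgiev-type monomial basis (the ``difference-$g$'' admissible monomials) is a basis of the lattice principal subspace, which is a nontrivial combinatorial-vertex-algebraic fact proved in \cite{Geog,MP,Kaw15}; here it is being cited rather than reproved. Given that citation, everything else is the soft argument that a charge- and weight-preserving surjection which identifies a spanning set with a basis must be an isomorphism. I would also remark, as the text does, that this is the same pattern of reasoning already used to prove \Cref{propmod} (the freeness of $\M m$ over $F(g)$), so in fact \Cref{lemwm} and \Cref{propmod} are two faces of the same computation: the middle term $\dim \M m_{r,d}$ is squeezed between the number of admissible monomials (an upper bound from \Cref{propmodgen}) and $\dim (W_L(m\beta^\circ))^{r\beta}_d$ (a lower bound from surjectivity of $\chi_m$), and the cited basis result shows these two numbers coincide, pinning down $\dim \M m_{r,d}$ and simultaneously forcing $\chi_m$ to be bijective.
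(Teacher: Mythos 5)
Your argument is exactly the one in the paper: $\chi_m$ is surjective by universality, the monomials \eqref{basisfm} span $\M m$ (via \Cref{propmodgen}, i.e., repeated use of \eqref{lemfund}), their images under $\chi_m$ form a basis of $W_L(m\beta^\circ)$ by the cited principal-subspace results \cite{Geog,MP,Kaw15}, and hence $\chi_m$ is an isomorphism. The extra bookkeeping you add (charge-by-charge, bigraded dimension count) is merely an explicit unpacking of the same ``spanning set maps to basis'' step the paper states in one line.
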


Let us consider bigradings. 
The space  $W_L(m\beta^\circ)$ is a bigraded subspace of $V_\ah$.
Let us identify $F(g;1,g/2)$ with $W_L$ and consider a  free GV module $\M{m;g^{-1}m,g^{-1}m^2/2}$ with a normalized bigrading.
The isomorphism
$
\chi_m:\M{m;g^{-1}m,g^{-1}m^2/2}\ra W_L(m\beta^\circ)
$ 
sends $\M m_{r,d}$ to $W_L(m\beta^\circ)^{r\beta}_d$ for any $r\in g^{-1}m+\Znn,d\in\CC$.

 Note that even if $g=0$, we may realize $\M m$  inside a GVA associated to a 2-dimensional vector space with a non-degenerate bilinear form. We can show that \eqref{basisfm} form a basis of $\M m$. The proof is easy and omitted.
 In the rest of the paper, we often assume that $g$ is a  non-zero complex number.

\subsection{The RR exact sequence for lattice principal subspaces}
\label{subsecRRprin}
Let $g$ be a non-zero complex number and $m$ a complex number.
By using the isomorphism $\chi_m:\M m\cong W_L(m\beta^\circ)$, we have  
 an isomorphism of exact sequences
 \begin{equation*}
  \xymatrix{
    0\ar[r]& \M {m+g} \ar[r]^\iota \ar[d]_{\chi_{m+g}} & \M m\ar[r]^\pi \ar[d]_{\chi_m}&\M {m+1}\ar[r]\ar[d]_{\chi_{m+1}}&0\\
   0\ar[r] &W_L((m+g)\beta^\circ)
   \ar[r]^{f_1}&W_L(m\beta^\circ) \ar[r]^{f_2} &W_L((m+1)\beta^\circ)\ar[r]& 0,
  }
\end{equation*}
with morphisms $f_1$ and $f_2$,
where the upper sequence is our RR exact sequence in \Cref{prop:fund}.
We determine $f_1,f_2$ explicitly.

We have $f_1(e^{(m+g)\beta^\circ})=\chi_m\circ \iota\circ \chi_{m+g}^{-1}(e^{(m+g)\beta^\circ})=\chi_m\circ\iota(v_{m+g})=\chi_m(b(-m-1)v_m)=e^\beta(-m-1)e^{m\beta^\circ}=e^{(m+g)\beta^\circ}$.
Therefore,
the injection $f_1$ is nothing but the natural embedding.

The surjection $f_2$ is described as follows.
Since $(\beta,\beta^\circ)=1$, \eqref{commrel} implies the anticommutation relation
$
e^\beta(n)e^{\beta^\circ}(k)=-e^{\beta^\circ}(k)e^\beta(n)
$
$(n,k\in\CC)$.
We twist the operator $e^{\beta^\circ}(k)$ so that we have a genuine commutation relation.
Take a complete system of representatives $\Lam\subset\ah$
 of the cosets in $\ah/\ZZ\beta$ such that $m\beta^\circ\in \Lam$.
 Let $\varepsilon:V_\ah\ra V_\ah$ 
be a linear isomorphism defined by 
$\varepsilon(v)=(-1)^r v$ ($v\in V_\ah^{\lam+r\beta}$, $\lam\in\Lam$).
It then follows that the operators $e^{\beta^\circ}(k)\circ \varepsilon$  ($k\in\CC$) 
genuinely commute with $e^\beta$:
we have
$
e^\beta(n)\circ (e^{\beta^\circ}(k)\circ \varepsilon_\Lam)=(e^{\beta^\circ}(k)\circ \varepsilon_\Lam)\circ e^\beta(n)$ for any $n,k\in\CC$.
In particular, the operator $\varphi=e^{\beta^\circ}(-g^{-1}m-1)\circ \varepsilon_m$ is a $W_L$-module homomorphism from  $W_L(m\beta^\circ)$ to $W_L((m+1)\beta^\circ)$ since we have
$\varphi.e^{m\beta^\circ}=e^{(m+1)\beta^\circ}$.
We show $f_2=\varphi$.
Since $e^{m\beta^\circ}\in W_L(m\beta^\circ)$ generates $W_L(m\beta^\circ)$ over $W_L$, we only have to show $\varphi (e^{m\beta^\circ})=f_2(e^{m\beta^\circ})$.
The left-hand side is 
$\varphi(e^{m\beta^\circ})=e^{(m+1)\beta^\circ}$.
The right-hand side is 
$
f_2(e^{m\beta^\circ})=f_2\circ \chi_m(v_m)=
\chi_{m+1}\circ \pi(v_m)=\chi_{m+1}(v_{m+1})=e^{(m+1)\beta^\circ},$
where $\pi$ is the surjection in the RR exact sequence.
Thus, we have $f_2=\varphi$.

In summary, we have induced the exact sequences
\begin{equation}\label{exactps2}
0\ra W_L((m+g)\beta^\circ)\hookrightarrow W_L(m\beta^\circ) \xrightarrow{e^{\beta^\circ}(-g^{-1}m-1)\circ \varepsilon} W_L((m+1)\beta^\circ)\ra 0.
\end{equation}
Note that the twist $\varepsilon$ does not affect kernels and cokernels. We omit $\varepsilon$ in the rest of this paper and there will be no confusions.

\begin{remark}\label{remclm}
Consider the case of $g=2$ and write $\beta=\alpha$.
The lattice $L$ is a root lattice of type $A_1$:  $L=A_1=\ZZ\alpha$, $(\alpha,\alpha)=2$.
Then \eqref{exactps2} with $m=0$ has the form
\begin{equation}\label{exacta1}
0\ra W_{A_1}(\alpha)\ra W_{A_1}(0)\ra W_{A_1}(\alpha/2)\ra0.
\end{equation}
Consider the operator $e_{\gamma}$ in \Cref{subseclattice}, which sends $W_L(\alpha/2)$ to  $W_{A_1}(\alpha/2+\gamma)$
bijectively.
By composing \eqref{exacta1} with the operator $e_{\alpha/2}$, we have  a linear exact sequence
\begin{equation}\label{clmrr}
0\ra W_{A_1}(\alpha/2)\ra W_{A_1}(0)\ra W_{A_1}(\alpha/2)\ra0.
\end{equation}
It is the lift of \eqref{genrrbasic} with $g=2$ introduced  in \cite{CLM03}.
Note that in \cite{CLM03}, they use \eqref{clmrr} to
obtain the fermionic character formula of $W_{A_1}(0)$ and $W_{A_1}(\alpha/2)$, which was first proved in \cite{SF}.
We have in the same way the rank one cases of exact sequences for lattice vertex algebras associated to positive lattices in \cite{PSW}. See also \cite{CalLM14}.
\end{remark}

\section{Invariance construction of principal subspaces}\label{secinvcoinv}

In this section, as an application of the RR exact sequences, we show that we can construct
$W_{L^\circ}(m\beta)$ by using the invariance construction $I_{e^{\beta},k}(\cdot)$ introduced in \Cref{subsecic}.
We also consider the restricted dual version of the construction.

Let $g$ be a non-zero complex number.
Consider the GVA $V_\ah$ and the element $v=e^\beta$. 
The subalgebra $W_{L^\circ}\subset V_\ah$ commutes with $e^\beta$,
since so does $e^{\beta^\circ}$ and $W_{L^\circ}$
is generated by $e^{\beta^\circ}$.

\subsection{Invariance construction}

Let $k$ and $m$ be  complex numbers.
Let us  consider the $W_{L^\circ}$-module
$$
I_k(W_{L^\circ}(m\beta))=\ker{(Y(e^\beta,z)_{\geq k})|_{W_{L^\circ}(m\beta)}}
=\bigcap_{n\geq0}
\ker{e^\beta(k+n):W_{L^\circ}(m\beta)\ra V_\ah}.
$$

Note that we have the inductive system
\begin{equation}\label{indsys}
W_{L^\circ}(m\beta)\subset W_{L^\circ}((m-g^{-1})\beta)\subset W_{L^\circ}((m-2g^{-1})\beta)\subset \cdots\subset 
W_{L^\circ}((m-ig^{-1})\beta)\subset \cdots
\end{equation}
and correspondingly,
$$
I_{k}(W_{L^\circ}(m\beta))\subset I_{k}(W_{L^\circ}((m-g^{-1})\beta))\subset I_{k}(W_{L^\circ}((m-2g^{-1})\beta))\subset \cdots
$$
Since $\bigcup_{i=0}^\infty W_{L^\circ}((m-ig^{-1})\beta)=V_{L^\circ+m\beta}$,
the inductive limit has the form $\varinjlim I_{k}(W_{L^\circ}((m-ig^{-1})\beta))=I_{k}(V_{L^\circ+m\beta})$,
which is also a  $W_{L^\circ}$-module.

Let us consider the $L^\circ$ version of \eqref{exactps2}:
\begin{equation}\label{exactgps}
0\ra W_{L^\circ}((m+g^{-1})\beta)\subset W_{L^\circ}(m\beta) \xrightarrow{e^{\beta}(-gm-1)} W_{L^\circ}((m+1)\beta)\ra 0.
\end{equation}
By using this, we have the following.

\begin{proposition}\label{propinv}
For any $k\in\CC$ and $m\in -g^{-1}k+g^{-1}\ZZ$,
the submodule  $I_k(W_{L^\circ}(m\beta))$ has the form
\begin{align*}
I_k(W_{L^\circ}(m\beta))=\begin{cases}
W_{L^\circ}(m\beta)&(gm\geqc0 -k)\\
W_{L^\circ}(-g^{-1}k\beta)& (gm\leqc0 -k).
\end{cases}
\end{align*}
In particular, we have $I_k(V_{L^\circ-k\beta^\circ})=W_{L^\circ}(-g^{-1}k\beta)$.
\end{proposition}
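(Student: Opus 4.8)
The plan is to treat the two cases of the proposition separately, leaning on the locality order of $(e^\beta,e^{m\beta})$ for the first and on the exact sequence \eqref{exactgps} for the second. Throughout I will use that $(\beta,\beta^\circ)=1$, so (as noted after \eqref{commrel}) $e^\beta(s)$ and $e^{\beta^\circ}(t)$ anticommute for all $s,t\in\CC$, and that $W_{L^\circ}(m\beta)$ is spanned by the monomials $e^{\beta^\circ}(j_r)\cdots e^{\beta^\circ}(j_1)e^{m\beta}$ (\Cref{propmodgen} applied to the cyclic $W_{L^\circ}$-module generated by $e^{m\beta}$). First suppose $gm\geqc0 -k$. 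The locality order of $(e^\beta,e^{m\beta})$ is $-(\beta,m\beta)=-gm\le k$, so $e^\beta(n)e^{m\beta}=0$ for every $n\ge k$; pushing $e^\beta(k+n)$ ($n\ge0$) past the generators $e^{\beta^\circ}(j_i)$ up to sign then shows $e^\beta(k+n)$ annihilates all of $W_{L^\circ}(m\beta)$, so $I_k(W_{L^\circ}(m\beta))=W_{L^\circ}(m\beta)$. In particular, taking $m=m_0:=-g^{-1}k$ (so $gm_0=-k$) gives $I_k(W_{L^\circ}(m_0\beta))=W_{L^\circ}(m_0\beta)$.

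Now suppose $gm\leqc0 -k$; I may assume $gm<-k$, since $gm=-k$ is covered above. Set $i_0:=-k-gm\in\Zp$, which is an integer by the hypothesis $m\in -g^{-1}k+g^{-1}\ZZ$, and note $m+i_0g^{-1}=m_0$. Iterating the inclusion in \eqref{exactgps} gives $W_{L^\circ}(m_0\beta)\subseteq W_{L^\circ}(m\beta)$, and since a vector of $W_{L^\circ}(m_0\beta)$ lies in $I_k(W_{L^\circ}(m\beta))$ exactly when it lies in $I_k(W_{L^\circ}(m_0\beta))$, the previous paragraph yields $W_{L^\circ}(m_0\beta)\subseteq I_k(W_{L^\circ}(m\beta))$. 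For the reverse inclusion, let $v\in I_k(W_{L^\circ}(m\beta))$; I will show by induction on $0\le j\le i_0$ that $v\in W_{L^\circ}((m+jg^{-1})\beta)$, the case $j=0$ being the hypothesis. If $v\in W_{L^\circ}((m+jg^{-1})\beta)$ with $j<i_0$, then $g(m+jg^{-1})=gm+j\le -k-1$, so the index $-g(m+jg^{-1})-1=-gm-j-1=(i_0-j-1)+k$ equals $k+n$ with $n=i_0-j-1\in\Znn$; hence $e^\beta\bigl(-g(m+jg^{-1})-1\bigr)v=0$, and by exactness of \eqref{exactgps} with $m$ replaced by $m+jg^{-1}$ we get $v\in W_{L^\circ}((m+(j+1)g^{-1})\beta)$. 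For $j=i_0$ this gives $v\in W_{L^\circ}(m_0\beta)$, so altogether $I_k(W_{L^\circ}(m\beta))=W_{L^\circ}(m_0\beta)=W_{L^\circ}(-g^{-1}k\beta)$.

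For the last statement, $m_0\beta=-k\beta^\circ$, so by \eqref{indsys} (with $m$ there replaced by $m_0$) we have $V_{L^\circ-k\beta^\circ}=\varinjlim_i W_{L^\circ}((m_0-ig^{-1})\beta)$, and each term satisfies $g(m_0-ig^{-1})=-k-i\le -k$, so the case just proved gives $I_k(W_{L^\circ}((m_0-ig^{-1})\beta))=W_{L^\circ}(m_0\beta)$ for all $i\ge0$. Since $I_k$ is an intersection of kernels it commutes with this direct limit of inclusions, whence $I_k(V_{L^\circ-k\beta^\circ})=W_{L^\circ}(m_0\beta)=W_{L^\circ}(-g^{-1}k\beta)$. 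I do not expect a genuine obstacle: the real content is the exactness of \eqref{exactgps}---in particular the identification of the kernel of $e^\beta(-gm-1)$ with $W_{L^\circ}((m+g^{-1})\beta)$---which has already been established, and what remains is only the (slightly fiddly) bookkeeping relating the indices $k+n$ to $-gm-1$ that makes the induction close up exactly at $m_0$.
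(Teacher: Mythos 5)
Your proof is correct and follows essentially the same route as the paper: the case $gm\geqc0 -k$ via the locality of $(e^\beta,e^{m\beta})$ and (anti)commutation with the generators of $W_{L^\circ}$, the case $gm\leqc0 -k$ by an induction that climbs from $m$ to $-g^{-1}k$ using the exactness of \eqref{exactgps} (your index-$j$ bookkeeping is just a reparametrization of the paper's induction on $gm$), and the final statement by passing to the direct limit. No gaps.
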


\begin{proof}
The latter statement follows from the former by taking the direct limit. 

We show the former assertion. We omit $0$ from the inequality $\geqc0$.
Suppose $gm\geq -k$. Since $e^\beta(-gm+n)e^{m\beta}=0$ for all $n\geq0$, we have $e^\beta(k+n)e^{m\beta}=0$ for each $n\geq 0$.
Since $e^\beta(n)$ and $e^{\beta^\circ}(m)$ commute with each other for any $n,m\in\CC$ and $W_{L^\circ}(m\beta)$ is generated by $e^{m\beta}$ over $W_{L^\circ}$, we have
 $I_k(W_{L^\circ}(m\beta))=W_{L^\circ}(m\beta)$.
Now, suppose $gm\leq -k$. 
The inclusion 
$I_k(W_{L^\circ}(m\beta))\supset W_{L^\circ}(-g^{-1}k\beta)$ follows from $e^\beta(k+n)e^{-g^{-1}k\beta}=0$ ($n\geq0$).
Therefore, it remains to show the opposite inclusion.
We fix $k$ and prove it by induction on $gm$. 
The case of $gm=-k$ is already done in the above.
Suppose $gm <-k$,
which implies $-gm-1\in k+\Znn$.
By \eqref{exactgps}, we have $\ker{e^\beta(-gm-1):W_{L^\circ}(m\beta)\ra V_\ah}=W_{L^\circ}((m+g^{-1})\beta)$.
It then follows that $I_k(W_{L^\circ}(m\beta))\subset W_{L^\circ}((m+g^{-1})\beta)$.
By the induction hypothesis, we have $I_k(W_{L^\circ}(m\beta))\subset W_{L^\circ}(-g^{-1}k\beta)$, which completes the proof.
\end{proof}
In particular, we have $I_k(V_{L^\circ})=W_{L^\circ}$,
which coincides with the rank one case of \cite[Theorem 7.2]{Kaw15}.

\subsection{Coinvariance construction}\label{subseccoinv}
We next describe the restricted dual version of the proposition.

Let $k$ and $m$ be  complex numbers.
Consider the isomorphism $(V_\ah)^\vee\cong V_\ah$ so that
$W_{L^\circ}(m\beta)^\vee$ is a quotient $W_{L^\circ}$-module of $V_\ah$ with the canonical surjection $\pi$.
Consider the $W_{L^\circ}$-module 
$$
C_k(W_{L^\circ}(m\beta)^\vee) 
=\coker{\pi\circ Y(e^\beta,z)_{\leq -k}}
=\frac {W_{L^\circ}(m\beta)^\vee}{\sum_{n\geq0}\pi\circ e^\beta(-k-n)V_\ah}.
$$

Note that we have the projective system
$$
W_{L^\circ}(m\beta)^\vee\twoheadleftarrow W_{L^\circ}((m-g^{-1})\beta)^\vee \twoheadleftarrow W_{L^\circ}((m-2g^{-1})\beta)^\vee\twoheadleftarrow \cdots
$$
and correspondingly,
$$
C_k(W_{L^\circ}(m\beta)^\vee)\twoheadleftarrow C_k(W_{L^\circ}((m-g^{-1})\beta)^\vee)\twoheadleftarrow C_k(W_{L^\circ}((m-2g^{-1})\beta)^\vee)\twoheadleftarrow \cdots.
$$
Since $\varprojlim W_{L^\circ}((m-ig^{-1})\beta)^\vee=V_{L^\circ+m\beta}^\vee$,
we have $\varprojlim C_k(W_{L^\circ}((m-ig^{-1})\beta)^\vee)=C_{k}(V_{L^\circ+m\beta}^\vee)$.

%
It is easy to verify that for any bigraded $W_{L^\circ}$-submodule $R$ of $V_\ah$, we have
\begin{equation}\label{dpcdp}
I_k(R)^\vee\cong C_{k-g+2}(R^\vee).
\end{equation}
as $W_{L^\circ}$-modules, since we have \eqref{contralattice}.

By taking the restricted duals of \Cref{propinv}, we have the
following.

\begin{proposition}\label{propinvdual}
For any $k\in\CC$ and $m\in -g^{-1}k+g^{-1}\ZZ$,
the quotient module $C_{k-g+2}(W_{L^\circ}(m\beta)^\vee)$   has the form
\begin{align*}
C_{k-g+2}(W_{L^\circ}(m\beta)^\vee)\cong\begin{cases}
W_{L^\circ}(m\beta)^\vee&(gm\geqc0 -k)\\
W_{L^\circ}(-g^{-1}k\beta)^\vee& (gm\leqc0 -k).
\end{cases}
\end{align*}
Moreover, $C_k(V_{L^\circ-k\beta^\circ}^\vee)\cong W_{L^\circ}(-g^{-1}k\beta)^\vee$ as $W_{L^\circ}$-modules.
\end{proposition}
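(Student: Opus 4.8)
The plan is to derive \Cref{propinvdual} by applying the restricted-dual functor of \Cref{subsecdual} to \Cref{propinv}. The one extra ingredient needed is the isomorphism \eqref{dpcdp}, $I_k(R)^\vee\cong C_{k-g+2}(R^\vee)$, valid for every bigraded $W_{L^\circ}$-submodule $R$ of $V_\ah$ (it follows from \eqref{contralattice} alone), together with the identification $(V_\ah)^\vee\cong V_\ah$ that realizes each $W_{L^\circ}(m\beta)^\vee$ as a quotient $W_{L^\circ}$-module of $V_\ah$, exactly as in the set-up preceding the proposition.

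For the first, case-by-case formula I would put $R=W_{L^\circ}(m\beta)$ into \eqref{dpcdp}, giving $C_{k-g+2}(W_{L^\circ}(m\beta)^\vee)\cong I_k(W_{L^\circ}(m\beta))^\vee$, and then substitute the value of $I_k(W_{L^\circ}(m\beta))$ provided by \Cref{propinv}. Both modules occurring there, $W_{L^\circ}(m\beta)$ and $W_{L^\circ}(-g^{-1}k\beta)$, are bigraded $W_{L^\circ}$-submodules of $V_\ah$ with finite-dimensional bigraded pieces (both being lattice principal subspaces, which carry PBW-type bases), so \Cref{subsecdual} applies to each and their restricted duals are precisely the quotient modules $W_{L^\circ}(m\beta)^\vee$ and $W_{L^\circ}(-g^{-1}k\beta)^\vee$ appearing in the statement. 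Reading off the two cases from \Cref{propinv} then yields the first displayed isomorphism.

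The limiting assertion I would obtain by passing to the inverse limit in $i$ of this formula along the projective system displayed immediately before the proposition (taken with index $k-g+2$ in place of $k$): the inverse-limit identity recorded there gives $\varprojlim_i C_{k-g+2}(W_{L^\circ}((m-ig^{-1})\beta)^\vee)\cong C_{k-g+2}(V_{L^\circ+m\beta}^\vee)$, and for $m\in-g^{-1}k+g^{-1}\ZZ$ we have $V_{L^\circ+m\beta}=V_{L^\circ-k\beta^\circ}$. For $i$ large, $g(m-ig^{-1})=gm-i\leqc0 -k$, so by the case-by-case formula every sufficiently far term of the system is $W_{L^\circ}(-g^{-1}k\beta)^\vee$ and the transition maps between those terms are isomorphisms; hence the inverse limit is $W_{L^\circ}(-g^{-1}k\beta)^\vee$, which, after the reindexing $k\mapsto k-g+2$ implicit in \eqref{dpcdp}, is the stated isomorphism. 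Equivalently one could dualize the ``in particular'' clause of \Cref{propinv} directly through \eqref{dpcdp}. I do not anticipate a genuine obstacle: this is a formal dualization. The only points requiring care are bookkeeping ones — confirming that the modules have finite-dimensional bigraded pieces so \Cref{subsecdual} is available, checking that the projective system actually stabilizes (so its inverse limit equals the eventual term rather than merely mapping onto it), and keeping track of the index shift $k\mapsto k-g+2$ coming from \eqref{dpcdp}.
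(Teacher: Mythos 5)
Your approach is exactly the paper's: the author introduces the identity $I_k(R)^\vee\cong C_{k-g+2}(R^\vee)$ in \eqref{dpcdp} precisely for this purpose, and the entire proof in the paper is the single sentence ``By taking the restricted duals of \Cref{propinv}, we have the following.'' Substituting $R=W_{L^\circ}(m\beta)$ and reading off \Cref{propinv} is what the author intends, and your treatment of the ``Moreover'' clause — either dualizing the ``in particular'' statement of \Cref{propinv} directly, or passing to the inverse limit along the displayed projective system — is the same formal dualization. The index bookkeeping you flag (the shift $k\mapsto k-g+2$ built into \eqref{dpcdp}) is genuine and you are right to track it; note that the paper's own later use of this result (in \Cref{subsecprepq}, where it is applied as $C_{k-p+2}(V_P)\cong W_P(-k\varpi)^\vee$) is stated with the shifted index, which corroborates your reading.
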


We remark about a relation between our construction and  $\asltwo$ spaces of coinvariants
 $L_{1,0}^{N,\infty}(\mathfrak n)$ ($N\in\ZZ$) studied in \cite{FKLMM}
 and others.
 The space  $L_{1,0}^{N,\infty}(\mathfrak n)$  by definition coincides with the space $C_{e^\alp,N}(V_{A_1})$ if we identify $V_{A_1}$ with $L_{1,0}$ 
 as $\asltwo$-modules.
 Here $L_{1,i}$ denotes the basic highest weight representation over $\asltwo$ of highest weight $i$ ($i=0,1$).
 We see that  $C_{e^\alp,N}(V_{A_1})$ is a module over the vertex algebra
 $W_{A_1^\circ}^{(A_1)}=\bigoplus_{\gamma\in A_1}W_{A_1^\circ}^\gamma$,
 since so is $V_{A_1}$.
Moreover, 
 $C_{e^\alp,N}(V_{A_1})$ is naturally in $C_{e^\alp,N}(V_{A_1^\circ})$  the $W_{A_1^\circ}^{(A_1)}$-submodule 
$C_{e^\alp,N}(V_{A_1^\circ})^{(A_1)}$.
 It now follows by \Cref{propinvdual} that $L_{1,0}^{N,\infty}(\mathfrak n)\cong W_{A_1^\circ}(-N\alpha/2)^{\vee,(A_1)}$ as 
 $W_{A_1^\circ}^{(A_1)}$-modules, where we write $\alpha$ instead of $\beta$.
  In particular, we have the fermionic character formula of 
  $L_{1,0}^{N,\infty}(\mathfrak n)$
 by using the character formula of the principal subspaces of type $A_1^\circ$.

\section{Fibonacci exact sequences}\label{secfib}

In this section, we consider a lift of \eqref{fibbasic}.
Recall the finite-dimensional vertex algebra $\fda kp$ and its free module $\fd km$ with $p\geq0$ and  $k,m\in\ZZ$.
Recall that their characters are essentially generalized $q$-Fibonacci polynomials.

Notice that by definition $\fda kp=C_{a,k}(F(p))=C_k(F(p))$, where the construction $C_k(\cdot)$ is introduced in \Cref{subsecic}.
We also have $\fd km\cong C_k(\M m)$ as $F(p)$-modules.
Consider the subset $\cB(m)_{<k}$ of $\cB(m)$ consisting of elements of $\cB(m)$ in \Cref{propmodgen} (with $b=a$) such that $n_r<k$.
The image of $\cB(m)_{<k}$ coincides with $\cB(k,m)$.

The Fibonacci nature of the character of $\fda kp\cong \fd k0\cong C_k(\M 0)$ follows from the following recursion relation
of the basis $\cB(0)_{<k}$:
\begin{align*}
&\cB(0)_{<k}=\{v_0\} \quad (k\leq 1), \quad
&\cB(0)_{<k}=\cB(0)_{<k-1}\sqcup a(-k+1)\cB(0)_{<k-p}\quad (k\geq 2).
\end{align*}
We also have a similar recursion for $\cB(m)_{<k}$ for any $m\in\ZZ$.
It implies the following Fibonacci type exact sequences.

\begin{proposition}\label{propfib}
There is a short exact sequence
\begin{align}\label{fibrr2}
0\ra \fd{k-p}m\xrightarrow f \fd km\xrightarrow g \fd{k-1}m
\ra 0,
\end{align}
of $F(p)$-modules
with the morphisms uniquely determined by
 $
 f(v_{k-p,m})=a(-k+1)v_{k,m}$ and $g(v_{k,m})=v_{k+1,m}.
 $
It may be seen as a grading preserving exact sequence
\begin{align*}
&0\ra \fd{k-p}{m;\chm+\chb,E+k-2+\cwb}
\ra \fd k{m;\chm,\cwm}\ra 
\fd{k-1}{m;\chm,\cwm}\ra0
\end{align*}
of bigraded $F(p;C,D)$-modules
\end{proposition}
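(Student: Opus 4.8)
The plan is to argue as in the proof of \Cref{propfibrr}: realize all three modules as explicit quotients of the free $F(p)$-module $\M m$, and extract the maps and exactness from the monomial basis \eqref{eqn:basisfd}, of which the recursion $\cB(m)_{<k}=\cB(m)_{<k-1}\sqcup a(-k+1)\cB(m)_{<k-p}$ recorded above is the module-level transcription. Write $J_\ell=\sum_{n\ge0}a(-\ell-n)\M m\subseteq\M m$, so that $\fd\ell m\cong C_\ell(\M m)=\M m/J_\ell$ as $F(p)$-modules. Since $\{-(k-1)-n:n\ge0\}=\{-(k-1)\}\sqcup\{-k-n:n\ge0\}$, we have $J_k\subseteq J_{k-1}$ and in fact $J_{k-1}=a(-k+1)\M m+J_k$.

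First I would take $g$ to be the canonical surjection $\M m/J_k\sra\M m/J_{k-1}$, which is $F(p)$-linear, sends $v_{k,m}$ to $v_{k-1,m}$, and has kernel $J_{k-1}/J_k$. Comparing the bases \eqref{eqn:basisfd} of $\fd km$ and of $\fd{k-1}m$, the map $g$ carries the monomials $a(-n_r)\cdots a(-n_1)v_{k,m}$ with $n_r\le k-2$ bijectively onto the basis of $\fd{k-1}m$, while it kills those with $n_r=k-1$ (for these, $a(-(k-1))$ already maps into $a(-(k-1))\M m\subseteq J_{k-1}$). Hence $\ker g$ is the span of the basis monomials of $\fd km$ whose leading index equals $k-1$.

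Next I would define $f\colon\M m/J_{k-p}\to\M m/J_k$ by $[x]\mapsto[a(-k+1)x]$; since $F(p)$ is commutative this is $F(p)$-linear and $f(v_{k-p,m})=a(-k+1)v_{k,m}$. The one point that needs work, and which I expect to be the main obstacle, is well-definedness, i.e.\ $a(-k+1)J_{k-p}\subseteq J_k$. This follows from the commutation recursion \eqref{lemfund} with $g=p$ — which holds on $\M m$ since $a(-p+j)a=0$ for all $j\ge0$ — applied with $s=-(k-1)$ and $t=-(k-p)-n$ $(n\ge0)$: on the right-hand side every summand is a scalar times a product whose leftmost mode is either $a(-(k-p)-n-p-j)=a(-k-(n+j))$ or $a(-(k-1)-1-j)=a(-k-j)$ with $j\ge0$, hence lies in $\sum_{i\ge0}a(-k-i)\M m=J_k$. (For $n\ge p$ one does not even need \eqref{lemfund}, as $a(-(k-p)-n)=a(-k-(n-p))$ already.) Granting this, $\im f=(a(-k+1)\M m+J_k)/J_k=J_{k-1}/J_k=\ker g$.

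Finally I would check that $f$ is injective and track the grading. For a basis monomial $a(-n_r)\cdots a(-n_1)v_{k-p,m}$ of $\fd{k-p}m$ one has $n_r\le(k-p)-1$, so $(k-1)-n_r\ge p$ and $f$ sends it to $[a(-(k-1))a(-n_r)\cdots a(-n_1)v_{k,m}]$, which is exactly the basis monomial of $\fd km$ obtained by prepending the leading index $k-1$. This assignment is injective on index tuples, so $f$ maps a basis of $\fd{k-p}m$ injectively onto the basis of $\ker g$ found above; in particular $f$ is injective, $\im f=\ker g$, and $0\to\fd{k-p}m\xrightarrow{f}\fd km\xrightarrow{g}\fd{k-1}m\to0$ is exact. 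For the bigraded version, prepending $a(-(k-1))$ raises the charge by $C$ and the normalized conformal weight by $(k-1)-1+D=k-2+D$, which is precisely the offset built into $\fd{k-p}{m;\chm+\chb,\cwm+k-2+\cwb}$, whereas $g$ visibly preserves the $(C,D;S,E)$-bigrading; hence the displayed sequence of bigraded $F(p;C,D)$-modules is grading preserving. (When $k\le m+1$ all three modules are one-dimensional, so the statement should be read with the hypothesis $k\ge m+2$, as in \Cref{propfibrr}.)
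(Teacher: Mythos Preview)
Your proof is correct and follows essentially the same approach as the paper's: both hinge on the computation, via the recursion \eqref{lemfund} with $s=-k+1$ and $t=-k+p-n$, that multiplying by $a(-k+1)$ carries the ``cutoff'' submodule $J_{k-p}$ into $J_k$ (the paper phrases this as $a(-k+p-n)w=0$ in $\fd km$ for $w=a(-k+1)v_{k,m}$), followed by a comparison of the monomial bases $\cB(k-p,m)$, $\cB(k,m)$, $\cB(k-1,m)$ to establish injectivity and exactness. Your explicit realization of all three terms as quotients $\M m/J_\ell$ and of $f$ as $[x]\mapsto[a(-k+1)x]$ is a slightly more direct packaging than the paper's universality argument, but the content is the same.
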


\begin{proof}
Consider the submodule $I$ of $\fd km$ generated by 
the vector $w=a(-k+1)v_{k,m}$.
It then follows that 
 we have an isomorphism $\fd km/I\cong \fd{k-1}m$.

To prove that there is an isomorphism $\fd{k-p}m\cong I$, 
we first show that  $a(-k+p-n)w=0$ for any $n\geq0$.
Since $a$ commutes with itself, we have 
$a(-k+p-n)w=a(-k+1)a(-k+p-n)v_{k,m}$.
We then apply \eqref{lemfund} with $b=a$, $g=p$, $s=-k+1$
and $t=-k+p-n$  and then
$$
a(-k+p-n)w=\sum_{j\geq0}c_j a(-k-n-j)a(-k+1+p+j)v_{k,m}
-\sum_{j\geq 0}c'_j a(-k-j)a(-k+p-n+1+j)v_{k,m}
$$ 
holds with certain numbers $c_j,c'_j$.
The right-hand side is zero since $k+n+j\geq k$ and $k+j\geq k$. Thus, we have $a(-k+p-n)w=0$.

Therefore, there is a surjective morphism $\fd{k-p}m\ra I$ which sends $v_{k-p,m}$ to
$w=a(-k+1)v_{k,m}$.
By comparing the bases, we see that it is injective.
The rest is clear.
\end{proof}

The sequence \eqref{fibrr2} is a  lift of the Fibonacci recursion \eqref{fibbasic}.
In \Cref{subsecswitch}, we observe that the restricted dual of \eqref{fibrr2} is
induced from the RR exact sequence in \Cref{prop:fund} with $g=1/p$.

\section{RF-type finite-dimensional modules}\label{secrf}

In this section, we introduce RF-type modules over $F(p)$,
which turn out to be isomorphic to the restricted
duals of $\fd km$ in \Cref{subsecswitch}.
We give combinatorial bases and construct RR exact sequences.


\subsection{Definitions}
Consider the free GVA $F(1/p)$ generated by an element $x$.
The free vertex algebra $F(p)$ is embedded in $F(1/p)$ 
with the identification
$a=x(-(p-1)/p-1)\cdots x(-2/p-1)x(-1/p-1)x$ (\Cref{secps}).
Moreover, $a$ and $x$ commute with each other.
Let $m$ be an integer.
We denote by $\Mcirc{m/p}$ a free GV module over $F(1/p)$ of order $-m/p$ freely generated  by an element $v_{m/p}$.

For any $i\in\{0,1,\ldots,p-1\}$, let us write $F(1/p)^{(i/p+\ZZ)}$
(resp., $\M m^{(i/p+\ZZ)}$) the subspace of $F(1/p)$ (resp., $\M m$) spanned by 
monomials of  length $i+pr$ with $r\in\Znn$.
Here, a monomial of the form $x(-n_r)\cdots x(-n_1)v_{m/p}$ has length $r$.
We  have the direct sum decomposition
$$
F(1/p)=\bigoplus_{i/p+\ZZ\in \frac1p\ZZ/\ZZ}F(1/p)^{(i/p+\ZZ)},\qquad 
\Mcirc{\frac mp}=\bigoplus_{i/p+\ZZ\in \frac1p\ZZ/\ZZ}\Mcirc{\frac mp}^{(i/p+\ZZ)}
$$
as $F(p)$-modules.
Note that the sub\,GVA $F(1/p)^{(\ZZ)}$ is a vertex algebra containing $F(p)$ 
and the above direct summands are all $F(1/p)^{(\ZZ)}$-modules.

Let $k$ and $m$ be integers.
Since $x$ and $a$ commute with each other, we
have  $F(p)$-modules
\begin{align*}
&\rf {k/p}{\frac mp}
=\coker{Y(x,z)_{\leq -k/p}|_{\Mcirc{m/p}^{(\ZZ)}}}
=\frac {\Mcirc{m/p}^{(1/p+\ZZ)}}{\sum_{n\in\ZZ_{\geq0}}x(-k/p-n)\Mcirc{m/p}^{(\ZZ)}},\\
&\ef {k/p}{\frac mp}
=\coker{Y(x,z)_{\leq -k/p}|_{\Mcirc{m/p}}}
= \frac {\Mcirc{m/p}}{\sum_{n\in\ZZ_{\geq0}}x(-k/p-n/p)\Mcirc{m/p}}.
\end{align*}
There is a direct sum decomposition 
\begin{equation}\label{efdecomp}
\ef{k/p}{\frac mp}=\bigoplus_{i=0}^{p-1}
\ef{k/p}{\frac mp,i},\quad 
\left(\ef{k/p}{\frac mp,i}=\ef{k/p}{\frac mp}^{((i+1)/p+\ZZ)}\right)
\end{equation}
as $F(p)$-modules.
Since we have
$$
\ef{k/p}{\frac mp,i}\cong
\frac {\Mcirc{m/p}^{((i+1)/p+\ZZ)}}{\sum_{n\in\ZZ_{\geq0}}x(-(k-i)/p-n)\Mcirc{m/p}^{(i/p+\ZZ)}},
$$
it follows that  $\ef{k/p}{m/p,0}\cong \rf{k/p}{m/p}$.

The character of $\rf {k/p}{m/p}$ is the series
 $\ch[\rf {k/p}{m/p}]=\sum_{r,d}(\dim(\rf {k/p}{m/p}_{r,d}))z^rq^d$,
 where 
$\rf {k/p}{m/p}_{r,d}$ is spanned by monomials 
$[x(-n_r)\cdots x(-n_1)v_m]$ with $n_1+\cdots+n_r=d$. 
Those of $\ef {k/p}{m/p}$ and $\ef {k/p}{m/p,i}$ are defined in the same way.

We consider the following normalized bigradings and associated characters.
Let $\chb,\cwb,\chb^\circ,\cwb^\circ$ be complex numbers
such that $\chb=p\chb^\circ\neq0$ and $\cwb=(\cwb^\circ+1/2)p-1/2$.
We consider the bigraded algebras $F(p;\chb,\cwb)$ and  $F(1/p;\chb^\circ,\cwb^\circ)=F(1/p;\chb^\circ,\cwb^\circ)$ with the normalized bigradings.
The embedding $F(p;\chb,\cwb)\subset F(1/p;\chb^\circ,\cwb^\circ)$ respects the bigradings.

Let $\chm,\cwm$ be complex numbers. 
Consider the bigraded module $\Mcirc{m/p;\chm,\cwm}$ over 
$F(1/p;\chb^\circ,\cwb^\circ)$.
Then we equip $\rf{k/p}{m/p}$, $\ef{k/p}{m/p}$ and $\ef{k/p}{m/p,i}$
with the bigradings induced from that on $\Mcirc{m/p;\chm,\cwm}$
and denote them by $\rf{k/p}{m/p;\chm,\cwm}$ and so on.
The natural embedding
 $\rf{k/p}{m/p;\chm,\cwm}\hookrightarrow \ef{k/p}{m/p;\chm,\cwm}$
respects the bigrading.
The associated character of $\rf{k/p}{m/p;\chm,\cwm}$ is denoted by $\ch[\rf{k/p}{m/p}](z,q)$ and so are the others.

In this section, we determine a basis $\ef{k/p}{m/p}$ and then deduce a basis for $\rf{k/p}{m/p}$. We then show RR exact sequences.

\subsection{Bases of EF- and RF-type modules}
To see the ideas, we start from  $m=0$.
We identify $\Mcirc0$ with $F(1/p)$ and consider $\ef {k/p}0$ as a quotient of $F(1/p)$.
We warn that $\ef {k/p}0$ is not a quotient GVA in general but always an $F(p)$-module.
Recall that we have  the following combinatorial basis $\cB^\circ$ of $F(1/p)$:
$
\cB^\circ=\{x(-n_r)\cdots x(-n_1)\bm1\,|\,r\geq0, 
n_r\geqc {1/p} \cdots\geqc {1/p} n_1\geqc 0 1\}.
$
If $k\leq p$, we  have $\ef{k/p}0=\CC[\bm1]$.
Suppose that $k>p$. We have the partition $\cB^\circ=\cB^\circ_{<k/p}\sqcup \cB^\circ_{\geq k/p}$, where $\cB^\circ_{<k/p}$ (resp., $\cB^\circ_{\geq k/p}$) consists of the monomials in $\cB^\circ$ with $n_r\in k/p+\QQ_{<0}$ (resp., $n_r\in k/p+\QQ_{\geq0}$).

\begin{theorem}\label{thmfibcircvac}
The images of the elements of  $\cB^{\circ}_{<k/p}$ 
form a $\CC$-basis of $\ef{k/p}0$.
\end{theorem}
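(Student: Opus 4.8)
The plan is to work from the concrete description $\ef{k/p}0=F(1/p)/N$, with $N$ denoting $\sum_{n\ge 0}x(-k/p-n/p)F(1/p)=\sum_{j\ge k}x(-j/p)F(1/p)$, and to prove separately that the images of $\cB^\circ_{<k/p}$ span this quotient and that they are $\CC$-linearly independent in it. Spanning is immediate: if $w=x(-n_r)\cdots x(-n_1)\bm1\in\cB^\circ_{\ge k/p}$, so that $n_r\ge k/p$, then the form of the indices in $\cB^\circ$ forces $n_r\in\tfrac{r-1}{p}+\Zp$ and in particular $pn_r\in\ZZ$, hence $n:=pn_r-k\in\Znn$ and $x(-n_r)=x(-k/p-n/p)$, so $w=x(-k/p-n/p)\bigl(x(-n_{r-1})\cdots x(-n_1)\bm1\bigr)\in N$. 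Thus $\cB^\circ_{\ge k/p}\subseteq N$, and since $\cB^\circ=\cB^\circ_{<k/p}\sqcup\cB^\circ_{\ge k/p}$ is a basis of $F(1/p)$, the images of $\cB^\circ_{<k/p}$ span $F(1/p)/N=\ef{k/p}0$.

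For independence it suffices, $\cB^\circ$ being a basis, to prove the reverse inclusion $N\subseteq\Span{\CC}{\cB^\circ_{\ge k/p}}$: together with the previous step this yields $N=\Span{\CC}{\cB^\circ_{\ge k/p}}$, so that any $\CC$-combination of $\cB^\circ_{<k/p}$ lying in $N$ lies in $\Span{\CC}{\cB^\circ_{<k/p}}\cap\Span{\CC}{\cB^\circ_{\ge k/p}}=0$. As $N$ is spanned by the elements $x(-j/p)v$ with $j\ge k$ and $v\in\cB^\circ$, the reverse inclusion reduces to the following lemma, which I would state for general $s$: \emph{if $-s\ge k/p$ and $v\in F(1/p)$, then the expansion of $x(s)v$ in the basis $\cB^\circ$ involves only monomials $x(-m_\rho)\cdots x(-m_1)\bm1$ of height $m_\rho\ge -s$} (in particular $m_\rho\ge k/p$, so $x(s)v\in\Span{\CC}{\cB^\circ_{\ge k/p}}$).

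I would prove the lemma by running the straightening procedure underlying \Cref{propalg} --- rewriting any monomial in the modes $x(n)$ as a finite $\CC$-combination of the basis monomials \eqref{basisfalg} (with $b=x$, $g=1/p$) by means of the vacuum axiom and the recursion \eqref{lemfund} --- while tracking the \emph{height}, i.e. the size of the leftmost, most negative mode. It is enough to take $v=x(-n_\rho)\cdots x(-n_1)\bm1\in\cB^\circ$. If $s\le -n_\rho$ then $x(s)v$ already has weakly increasing mode indices, and by the weight/charge grading \eqref{bzeroalg} it is either $0$ or a basis monomial of height $-s$. If $s>-n_\rho$, apply \eqref{lemfund} to the leading pair $x(s)x(-n_\rho)$: it rewrites $x(s)v$ as a finite sum of terms $x(s')\bigl(x(s'')\,x(-n_{\rho-1})\cdots x(-n_1)\bm1\bigr)$ in which the new leading index always satisfies $s'\le s-1/p<s$ --- indeed $s'=-n_\rho-1/p-\nu<s$ in the first sum of \eqref{lemfund} because $s>-n_\rho$, while $s'=s-1-\nu\le s-1/p$ in the second --- so the new leading mode has size $-s'\ge -s+1/p>-s\ge k/p$; iterating the reduction, one finds that every basis monomial ultimately produced has height $\ge -s$.

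The step that genuinely needs care is this iteration. One must organise the straightening of $x(s)v$ around a well-founded order --- the same kind of termination order used in \cite{Kaw15,R} and in the proof of \Cref{propalg} to establish that \eqref{basisfalg} is a basis --- and check that ``every basis monomial appearing has height $\ge -s$'' is an invariant of the reduction, in particular that re-merging a new leading mode $x(s')$ (with $-s'>-s$) into the reduced tail never produces a basis monomial of height $<-s$. Granting this, the theorem follows by combining the spanning and independence halves; the degenerate range $k\le p$, where $\cB^\circ_{<k/p}=\{\bm1\}$ and $\ef{k/p}0=\CC\bm1$, is a consistent special case.
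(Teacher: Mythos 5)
Your proposal is correct and follows essentially the same route as the paper's proof: both reduce the theorem to showing $J_k=\Span{\CC}{\cB^\circ_{\ge k/p}}$, both take the inclusion $\supset$ as the easy half, and both prove $\subset$ by straightening $x(-m_r)x(-n_{r-1})\cdots x(-n_1)\bm 1$ with the recursion \eqref{lemfund}, observing that in case (ii) the leading mode strictly increases. The ``well-founded order'' you rightly flag as the delicate point is precisely what the paper supplies by inducting on $n=m_1+\cdots+m_{r-1}$ (equivalently total degree minus height, which is bounded below by \eqref{bzeroalg}), so your remark that height is bounded within each bidegree is the missing ingredient and matches the paper's argument exactly.
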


A proof of the theorem is written in \Cref{proofthmfibcirc}.
We now give a basis of
$\ef{k/p}{m/p;\chm,\cwm}$ with $k,m\in\ZZ$ and $\chm,\cwm\in\CC$. 
We denote the combinatorial basis of $\Mcirc{m/p}$ by $\cB^\circ(m/p)$:
$$
\cB^\circ\left(\frac mp \right)=\{x(-n_r)\cdots x(-n_1)v_{m/p}\,|\,r\geq0, 
n_r\geqc {1/p} \cdots\geqc {1/p} n_1\geqc {m/p} 1\}.
$$
If $k\leq p+m$, we  have $\ef{k/p}{m/p}= \CC [v_{m/p}]$.
Suppose that $k>p+m$. We have the partition $\cB^\circ(m/p)=\cB^\circ(m/p)_{<k/p}\sqcup \cB^\circ(m/p)_{\geq k/p}$, where $\cB^\circ(m/p)_{<k/p}$ (resp., $\cB^\circ(m/p)_{\geq k/p}$) consists of the monomials in $\cB^\circ(m/p)$ with $n_r\in k/p+\QQ_{<0}$ (resp., $n_r\in k/p+\QQ_{\geq0}$).
We now have the following theorem.

\begin{theorem}\label{thmfibcirc}
The images of the  elements of $\cB^{\circ}(m/p)_{<k/p}$  form a $\CC$-basis of $\ef{k/p}{m/p}$.
\end{theorem}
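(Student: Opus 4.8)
The plan is to pin down exactly the subspace we are quotienting by. Set
\[
R=\sum_{n\in\Znn}x(-k/p-n/p)\,\Mcirc{m/p}\ \subseteq\ \Mcirc{m/p},
\]
so that $\ef{k/p}{m/p}=\Mcirc{m/p}/R$. Since $\cB^\circ(m/p)$ is a $\CC$-basis of $\Mcirc{m/p}$ (\Cref{propmod} with $g=1/p$), it suffices to prove
\[
R=\Span{\CC}{\cB^\circ(m/p)_{\geq k/p}};
\]
the images of the elements of $\cB^\circ(m/p)_{<k/p}$ then form a basis of the quotient, and the character and bigrading statements of this section follow at once since they are read off from the monomials. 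Throughout I assume $k>p+m$, the case $k\le p+m$ being immediate.

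First I would prove $R\supseteq\Span{\CC}{\cB^\circ(m/p)_{\geq k/p}}$. A monomial $x(-n_r)\cdots x(-n_1)v_{m/p}\in\cB^\circ(m/p)_{\geq k/p}$ has $n_r\in k/p+\QQ_{\geq0}$, and since all $n_i$ and $k/p$ lie in $\tfrac1p\ZZ$ we may write $-n_r=-k/p-n/p$ with $n=pn_r-k\in\Znn$; hence the monomial lies in $x(-k/p-n/p)\Mcirc{m/p}\subseteq R$. This already shows that $\cB^\circ(m/p)_{<k/p}$ spans $\ef{k/p}{m/p}$.

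The reverse inclusion $R\subseteq\Span{\CC}{\cB^\circ(m/p)_{\geq k/p}}$ is the crux, and I would prove it by the same normal-ordering argument as in the proof of \Cref{thmfibcircvac} given in \Cref{proofthmfibcirc}, the only change being that the vacuum condition $x(n)\bm1=0$ $(n\geq0)$ is replaced by the lowest-charge relation \eqref{mmvacum} for $v_{m/p}$ — a shift of the bottom boundary by $m/p$ that does not touch the top modes. In detail: by linearity it is enough to show that for every basis monomial $w=x(-n_r)\cdots x(-n_1)v_{m/p}\in\cB^\circ(m/p)$ and every $n\in\Znn$, the element $x(-k/p-n/p)w$, rewritten in the basis $\cB^\circ(m/p)$ by repeated use of the recursion \eqref{lemfund} (with $b=x$, $g=1/p$) together with \eqref{mmvacum}, involves only monomials whose leftmost (most negative) mode is $\le-k/p$, i.e.\ lies in $\cB^\circ(m/p)_{\geq k/p}$. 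One runs this straightening by induction on the charge $r$, with an inner induction handling termination exactly as in \Cref{proofthmfibcirc}; the reason the leftmost-mode bound propagates is that applying \eqref{lemfund} to a pair $x(s)x(t)$ that is not yet normally ordered (i.e.\ with $t\le s$) produces only terms $x(s')x(t')\cdots$ with $s'\le s-1/p<s$, so starting from the mode $s=-k/p-n/p\le-k/p$ the leftmost mode can only decrease.

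The main obstacle is exactly this last bookkeeping — establishing that the normal-ordering procedure terminates and never raises the leftmost mode above $-k/p$ — which is precisely the technical content already carried out for the vacuum module in \Cref{proofthmfibcirc}, to be invoked with the above shift or reproved verbatim. (An alternative, deducing the module case from \Cref{thmfibcircvac} by applying the right-exact functor $\ef{k/p}{-}$ to the RR exact sequence of \Cref{prop:fund} with $g=1/p$ and inducting on $m$, would still require controlling the relevant kernel, i.e.\ the same straightening, so I would carry it out directly.) Finally, intersecting the resulting basis with the monomials of length $\equiv1\ (\mathrm{mod}\ p)$ recovers the basis of $\rf{k/p}{m/p}$ via the identification $\ef{k/p}{m/p,0}\cong\rf{k/p}{m/p}$.
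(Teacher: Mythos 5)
Your proposal is correct and matches the paper's intent exactly: the paper omits this proof as ``similar to that of \Cref{thmfibcircvac}'', and your argument is precisely that adaptation — reduce to showing $\sum_{n\geq0}x(-k/p-n/p)\Mcirc{m/p}=\Span{\CC}{\cB^\circ(m/p)_{\geq k/p}}$ and run the straightening via \eqref{lemfund} with the vacuum relation replaced by \eqref{mmvacum}, keeping track that the leftmost mode never rises above $-k/p$ and that termination follows from the truncation relations. No gaps worth noting.
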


The proof is similar to that of \Cref{thmfibcircvac} and omitted.

\subsection{Exact sequences for $\rf {k/p}{m/p}$}

In this section, we apply \Cref{prop:fund} to obtain  exact sequences among EF- and RF-type modules.
By \Cref{prop:fund} with $g=1/p$, we have an exact sequence
\begin{align}\label{efrrpre}
0\ra 
\Mcirc{\frac{m+1}p;\chm+\chb^\circ,E+\frac mp+\cwb^\circ}
\xrightarrow\iota
\Mcirc{\frac mp;\chm,\cwm}
\xrightarrow\pi
 \Mcirc{\frac{m}p+1;\chm,\cwm}\ra 0,
\end{align}
of $F(1/p)$-modules such that
 $
\iota(v_{(m+1)/p})=x(-m/p-1)v_{m/p}$ and 
$\pi(v_{m/p})=v_{m/p+1}$.

\begin{proposition}\label{propefrr}
If $k> m+p$, the exact sequence \eqref{efrrpre} induces an exact sequence of bigraded $F(p;\chb,\cwb)$-modules of the form
\begin{align}\label{efrr}
0\ra \ef {k/p}{\frac{m+1}p;\chm+\chb^\circ,E+\frac mp+\cwb^\circ}
\xrightarrow{\bar\iota} \ef  {k/p}{\frac mp;\chm,\cwm}
\xrightarrow{\bar\pi} \ef  {k/p}{\frac{m}p+1;\chm,\cwm}\ra 0,
\end{align}
where $\bar\iota$ sends an element $[x(-n_r)\cdots x(-n_1)v_{(m+1)/p}]$ of the basis $[\cB^\circ((m+1)/p)_{<k/p}]$
 to the element $[x(-n_r)\cdots x(-n_1)x(-m/p-1)v_{m/p}]$ 
and 
$\bar\pi$ sends $[x(-n_r)\cdots x(-n_1)v_{m/p}]$ in $[\cB^\circ(m/p)_{<k/p}]$
 to $[x(-n_r)\cdots x(-n_r)v_{m/p+1}]$.
\end{proposition}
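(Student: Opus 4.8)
The plan is to obtain \eqref{efrr} by applying the cokernel construction that defines the EF-type modules to the exact sequence \eqref{efrrpre}, and then to read off exactness from the combinatorial bases furnished by \Cref{thmfibcirc}.

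First I would check that $\iota$ and $\pi$ descend to the EF-type modules. Since both are $F(1/p)$-module homomorphisms, they commute with every mode $x(s)$; hence $\iota$ carries $\sum_{n\in\Znn}x(-k/p-n/p)\Mcirc{(m+1)/p}$ into $\sum_{n\in\Znn}x(-k/p-n/p)\Mcirc{m/p}$, and $\pi$ carries $\sum_{n\in\Znn}x(-k/p-n/p)\Mcirc{m/p}$ into $\sum_{n\in\Znn}x(-k/p-n/p)\Mcirc{m/p+1}$. As $a$ commutes with $x$, these subspaces are $F(p)$-submodules, so $\iota,\pi$ induce $F(p)$-module maps $\bar\iota,\bar\pi$. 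Since \eqref{efrrpre} preserves gradings and these subspaces are bigraded, $\bar\iota,\bar\pi$ preserve the induced bigradings; combined with the bigrading-compatible embedding $F(p;\chb,\cwb)\subset F(1/p;\chb^\circ,\cwb^\circ)$ and the shifts recorded in \eqref{efrrpre} this yields the grading-preserving form of the sequence. Using $\iota(v_{(m+1)/p})=x(-m/p-1)v_{m/p}$, $\pi(v_{m/p})=v_{m/p+1}$ and $F(1/p)$-linearity, $\bar\iota$ and $\bar\pi$ act on monomials exactly as claimed.

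Next I would establish exactness by comparing bases. By \Cref{thmfibcirc}, the sets $[\cB^\circ((m+1)/p)_{<k/p}]$, $[\cB^\circ(m/p)_{<k/p}]$ and $[\cB^\circ(m/p+1)_{<k/p}]$ are bases of the three EF-type modules. Given a monomial $x(-n_r)\cdots x(-n_1)v_{(m+1)/p}$ in $\cB^\circ((m+1)/p)_{<k/p}$, the monomial $x(-n_r)\cdots x(-n_1)x(-m/p-1)v_{m/p}$ still satisfies the difference-$1/p$ conditions defining $\cB^\circ(m/p)$ — the only new constraint, $n_1-m/p-1\in 1/p+\Znn$, follows from $n_1-1\in (m+1)/p+\Znn$ — and its largest mode label is again $<k/p$: this is automatic when $r\geq1$ (the label $n_r$ is unchanged), while for $r=0$ it becomes $m/p+1<k/p$, i.e.\ $k>m+p$, which is exactly the hypothesis. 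Hence $\bar\iota$ sends the basis of $\ef{k/p}{(m+1)/p}$ injectively onto the set of basis vectors of $\ef{k/p}{m/p}$ whose innermost factor is $x(-m/p-1)$; in particular $\bar\iota$ is injective, with image the span of that set. On the other hand $\bar\pi$ annihilates precisely those basis vectors of $\ef{k/p}{m/p}$ whose innermost factor is $x(-m/p-1)$ (for these $x(-m/p-1)v_{m/p+1}=0$ by the vacuum relation on $v_{m/p+1}$), and restricts to a bijection from the remaining basis vectors onto the basis $[\cB^\circ(m/p+1)_{<k/p}]$. Therefore $\bar\pi$ is surjective and $\ker{\bar\pi}$ coincides with $\im\bar\iota$, which gives the short exact sequence \eqref{efrr}.

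The genuinely non-formal ingredient is the last step: the bookkeeping that matches the monomials of $\cB^\circ(m/p)_{<k/p}$, via deletion or insertion of an innermost factor $x(-m/p-1)$, with those of $\cB^\circ((m+1)/p)_{<k/p}$ and of $\cB^\circ(m/p+1)_{<k/p}$, together with the accompanying observation that $k>m+p$ is precisely what keeps $\bar\iota(v_{(m+1)/p})=[x(-m/p-1)v_{m/p}]$ out of the defining relations $\sum_{n\in\Znn}x(-k/p-n/p)\Mcirc{m/p}$ of $\ef{k/p}{m/p}$. Everything else is formal: surjectivity of $\bar\pi$ and exactness at the middle term already follow from right-exactness of the cokernel functor applied to \eqref{efrrpre}, so the basis comparison is really only needed for the injectivity of $\bar\iota$.
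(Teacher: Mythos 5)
Your proof is correct and follows essentially the same route as the paper: descend $\iota,\pi$ to the cokernels using their $F(1/p)$-linearity and the fact that $a$ commutes with $x$, then verify exactness by comparing the combinatorial bases from Theorem~\ref{thmfibcirc}. The paper compresses the final step to ``by comparing bases,'' while you spell out the insertion/deletion bijections and pinpoint (correctly) that $k>m+p$ is needed only for the $r=0$ case of $\bar\iota$, and that exactness at the last two terms is formal, so the basis comparison is really only needed for injectivity of $\bar\iota$.
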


\begin{proof}
Let us write $\ef {k/p}{m/p}= \Mcirc{m/p}/J_{k,m}$ with
the $F(p)$-submodule $J_{k,m}=\sum_{n\geq 0}x(-(k+n)/p)\Mcirc{m/p}\subset \Mcirc{m/p}$.
Since $\iota$ in \eqref{efrrpre} is an $F(1/p)$-module homomorphism,
we have 
$\iota \circ x(-(k+n)/p)=x(-(k+n)/p)\circ \iota $ for all $n\geq 0$
and the same holds for $\pi$.
It then follows that $\iota(J_{k,m+1})\subset J_{k,m}$ and $\pi(J_{k,m})\subset J_{k,m+p}$.
Therefore, the complex of the form \eqref{efrr} of $F(p)$-modules is induced.
By comparing bases, we see that it is exact, which completes the proof.
\end{proof}

Since the morphisms in \eqref{efrr} respects the bigradings and 
different summands in \eqref{efdecomp} have no common charges, 
we have the following.

\begin{corollary}\label{corefrr}
If $k> m+p$, then 
there is an exact sequence of bigraded $F(p;\chb,\cwb)$-modules
\begin{align*}
&0\ra \ef {k/p}{\frac{m+1}p,i;\chm+\chb^\circ,E+\frac mp+\cwb^\circ}
\ra \ef  {k/p}{\frac mp,i;\chm,\cwm}
\ra \ef  {k/p}{\frac{m}p+1,i;\chm,\cwm}
 \ra 0.
\end{align*}
for any $i=0,1,\ldots,p-1$.
In particular, we have an exact sequence
\begin{align}
&0\ra \rf {k/p}{\frac{m+1}p;\chm+\chb^\circ,E+\frac mp+\cwb^\circ}
\ra \rf  {k/p}{\frac mp;\chm,\cwm}
\ra \rf  {k/p}{\frac{m}p+1;\chm,\cwm}
 \ra 0.\label{efrrsummand}
\end{align}
\end{corollary}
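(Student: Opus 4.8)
The plan is to deduce the corollary from \Cref{propefrr} by splitting the exact sequence \eqref{efrr} into its charge-graded components, the conformal-weight grading being preserved throughout (as it already is in \eqref{efrr}). The first step is to check that both morphisms of \eqref{efrr} are homogeneous for the charge grading appearing in \eqref{efdecomp}: $\bar\pi$ is induced by the length-preserving assignment $v_{m/p}\mapsto v_{m/p+1}$, so it carries each charge-class piece into the piece of the same charge, whereas $\bar\iota$ is induced by $v_{(m+1)/p}\mapsto x(-m/p-1)v_{m/p}$, which adjoins one extra factor of $x$ and hence raises the length of every monomial by one, i.e.\ advances its charge class in $\tfrac1p\ZZ/\ZZ$ by a fixed step. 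In either case a charge-class piece of the source is carried into a single charge-class piece of the target; and since, by \eqref{efdecomp}, each of the three terms of \eqref{efrr} is the direct sum of its $p$ charge-class pieces and these pieces carry pairwise distinct charges, the sequence \eqref{efrr} is, as a complex of bigraded $F(p;\chb,\cwb)$-modules, the direct sum of the $p$ subcomplexes obtained by fixing the charge class of the middle term.

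The second step is the elementary remark that a direct sum of complexes of vector spaces is exact at a given spot if and only if each direct summand is exact there. Applying this to \eqref{efrr}, which is exact by \Cref{propefrr}, each of the $p$ charge-class subcomplexes is a short exact sequence; rewriting its three terms through $\ef{k/p}{m/p,i}=\ef{k/p}{m/p}^{((i+1)/p+\ZZ)}$ and its analogues for the remaining two orders gives, for each $i$, a short exact sequence of three charge-class pieces of the form displayed in the corollary, with the bigrading shifts read off directly from \eqref{efrr}. Finally, taking the charge class $\tfrac1p+\ZZ$, i.e.\ the piece indexed by $i=0$, and invoking the identification $\ef{k/p}{m/p,0}\cong\rf{k/p}{m/p}$ recorded just after \eqref{efdecomp} for each of the three orders, produces \eqref{efrrsummand}.

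I expect no serious obstacle here, as the substance is already in place --- the combinatorial bases $[\cB^\circ(m/p)_{<k/p}]$ of \Cref{thmfibcircvac}--\Cref{thmfibcirc} and the exactness of \eqref{efrr} from \Cref{propefrr} --- and what remains is bookkeeping. The one point that needs care is keeping the charge labels of the three terms straight: since $\bar\iota$ adjoins a factor of $x$ and thus shifts every length, hence every charge class, by one step while $\bar\pi$ leaves lengths unchanged, one has to be attentive to which charge-class piece of the left-hand module sits in each summand sequence --- and this is immediate once the effect of the mode $x(-m/p-1)$ on the bases of \Cref{thmfibcirc} is tracked.
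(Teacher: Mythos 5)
Your decomposition argument is the same as the paper's one-line justification (\emph{``since the morphisms in \eqref{efrr} respect the bigradings and different summands in \eqref{efdecomp} have no common charges''}), and the idea --- split the exact sequence of \Cref{propefrr} into its charge-class summands and invoke that a direct sum of complexes is exact iff each summand is --- is sound. But the bookkeeping step that you flag at the end as needing care is precisely where the argument, as you have left it, fails. You correctly note that $\bar\iota$ raises every length by one and hence shifts the charge class. Carrying this through: if the middle term is $\ef{k/p}{\frac mp,i}$, which by \eqref{efdecomp} is the length-$\equiv i+1\pmod p$ piece of $\ef{k/p}{\frac mp}$, then its preimage under $\bar\iota$ consists of the length-$\equiv i$ monomials of $\Mcirc{(m+1)/p}$, that is $\ef{k/p}{\frac{m+1}p}^{(i/p+\ZZ)}=\ef{k/p}{\frac{m+1}p,i-1}$ with index taken mod $p$, \emph{not} $\ef{k/p}{\frac{m+1}p,i}$. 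So the three terms of the $i$-th summand do \emph{not} carry the same label $i$; in particular the $i=0$ summand has $\ef{k/p}{\frac{m+1}p,p-1}$ on the left, which is not $\rf{k/p}{\frac{m+1}p}=\ef{k/p}{\frac{m+1}p,0}$. Therefore \eqref{efrrsummand} does not follow from the direct-sum decomposition as you have set it up, and the claim that the summand sequences are ``of the form displayed in the corollary'' is unjustified.

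This is not merely a notational quibble; the displayed statement \eqref{efrrsummand} appears not to hold. Take $p=2$, $k=5$, $m=0$. Using \Cref{thmfibcirc} one finds $\dim\rf{5/2}{0}=3$ (basis $x(-1)v_0$, $x(-2)v_0$, $x(-2)x(-\tfrac{3}{2})x(-1)v_0$), while $\dim\rf{5/2}{\tfrac{1}{2}}=1$ (basis $x(-\tfrac{3}{2})v_{1/2}$) and $\dim\rf{5/2}{1}=1$ (basis $x(-2)v_1$), so the dimensions in \eqref{efrrsummand} fail to add up; the $i=0$ summand sequence does hold, but with the left term replaced by $\ef{5/2}{\tfrac{1}{2},1}$, of dimension $2$ with basis $v_{1/2}$ and $x(-2)x(-\tfrac{3}{2})v_{1/2}$. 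One can also see the obstruction from the bigrading alone: the bigraded charges occurring in $\rf{k/p}{\frac{m+1}p;\chm+\chb^\circ,\ldots}$ and in $\rf{k/p}{\frac mp;\chm,\cwm}$ lie in disjoint cosets of $p\chb^\circ\ZZ$ inside $\chm+\chb^\circ\ZZ$, so any bigrading-preserving map between them vanishes. The correct output of your argument is the $i$-indexed sequence with $i-1$ on the left (or, equivalently, one should re-label the summands so the index is compatible under $\bar\iota$, e.g.\ by lattice charge class rather than by monomial length); you should have followed the bookkeeping you yourself singled out far enough to catch this.
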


\section{Switching isomorphism}\label{secswitch}
In this section, we study  RF- and EF-type modules and $\fd km$ 
by using lattice realization.

\subsection{Preliminaries}\label{subsecprepq}
Let $p$ be a positive integer.
Let $\alpha$ be an element of $\ah$ such that $(\alpha,\alpha)=p$
and set $Q=\ZZ\alpha$.
Consider the dual element $\varpi=\alpha/p$ with the dual lattice $P=\ZZ\varpi$ of $Q$.
We have the lattice vertex algebra $V_Q$ and lattice GVA $V_P$ with the  lattice principal subspaces $W_Q=\langle e^\alpha\rangle$ and $W_P=\langle e^\varpi\rangle$.
We have the embedding $V_Q\subset V_P$ and $W_Q\subset W_P^{(Q)}\subset W_P$.
Here and after, for any subset $A\subset P$ and any graded subspace $M=\bigoplus_{\lam\in P}M^\lam$ of $V_P$ with $M^\lam=M\cap V_P^\lam$,
we write $M^{(A)}=\bigoplus_{\lam\in A}M^\lam$.

We now consider submodules of $V_P$.
We have the decomposition
$
V_P=\bigoplus_{i=0}^{p-1} V_{Q+i\varpi}
$
as $V_Q$-modules, hence as $W_P^{(Q)}$-modules (and therefore as $W_Q$-modules).
Moreover, by considering charges, we have
\begin{equation}\label{eqnwpdecomp}
W_P(-k\varpi)=\bigoplus_{i=0}^{p-1} W_P(-k\varpi)^{(Q+i\varpi)},
\quad
W_P(-k\varpi)^\vee=\bigoplus_{i=0}^{p-1} W_P(-k\varpi)^{\vee,(i\varpi+Q)},
\end{equation}
as $W_P^{(Q)}$-modules, and hence as $W_Q$-modules.
Recall that
$I_{k}(V_P)=I_{k,e^\alp}(V_P)=W_P(-k\varpi)$
and
$C_{k-p+2}(V_P)\cong W_P(-k\varpi)^\vee$ as $W_P$-modules
(\Cref{propinv}, \Cref{propinvdual}).
By \eqref{eqnwpdecomp}, we have
\begin{align}
&I_k(V_P)=\bigoplus_{i=0}^{p-1}I_k(V_P)^{(Q+i\varpi)},
\quad I_k(V_P)^{(Q+i\varpi)}=I_k(V_{Q+i\varpi})=W_P(-k\varpi)^{(Q+i\varpi)},\label{eqn:ikvpdecomp}
\\
&C_{k-p+2}(V_P)=\bigoplus_{i=0}^{p-1}C_{k-p+2}(V_P)^{(Q+i\varpi)},
\quad C_{k-p+2}(V_P)^{(Q+i\varpi)}=C_{k-p+2}(V_{Q+i\varpi})
\cong W_P(-k\varpi)^{\vee,(i\varpi+Q)},\label{eqn:calpqp}
\end{align}
as  $W_P^{(Q)}$-modules and hence as $W_Q$-modules.

The algebras $F(p)$ and $F(1/p)$  generated by $a$ and $x$
are identified with $W_Q$ and $W_P$ via $a\mapsto e^\alpha$
and $x\mapsto e^\varpi$.
Then we have the isomorphisms $W_Q(m\varpi)\cong \M m$ and $W_P(m\varpi)\cong \Mcirc{m/p}$.

\subsection{Embedding into the ambient quotient space}
\label{subsecemb}

Recall from \Cref{secfib} that $\fd k m\cong C_{k}(\M m)$ as $F(p)$-modules.
By using the result of \Cref{subsecRRprin}
and \eqref{fibrr}, we induce an isomorphism of exact sequences
\begin{equation}\label{eqnoutside}
  \xymatrix{
    0\ar[r]& \fd k{m+p} \ar[r] \ar[d]_{C_k(\chi_{m+p})} & \fd km\ar[r] \ar[d]_{C_k(\chi_m)}&\fd k{m+1}\ar[r]\ar[d]_{C_k(\chi_{m+1})}&0\\
   0\ar[r] &C_k(W_Q((m+p)\varpi))\ar[r]^{C_k(\subset)} &C_k(W_Q(m\varpi)) \ar[r]^{C_k(\varphi)} &C_k(W_Q((m+1)\varpi))\ar[r]& 0,
  }
\end{equation}
where $\varphi=e^{\beta^\circ}(-m/p-1)$.

We now show that it induces a RR exact sequence inside $C_k(V_P)$. 
Consider the embedding $W_Q(m\varpi)\subset V_{Q+m\varpi}$ and then we have a morphism 
$C_k(W_Q(m\varpi))\ra C_k(V_{Q+m\varpi})$.
The image of the morphism is $[W_Q(m\varpi)]=W_Q(m\varpi)+J_k(V_{Q+m\varpi})$
with $J_k(V_{Q+m\varpi})=\sum_{n\geq 0}e^\alp(-k-n)V_{Q+m\varpi}$.
Let us denote the restriction of the morphism to the image by $\psi_m:C_k(W_Q(m\varpi))\twoheadrightarrow [W_Q(m\varpi)]$.
The morphisms $\psi_m$ induce a morphism of complexes
\begin{equation}\label{eqninside}
  \xymatrix{
    0\ar[r]& C_k(W_Q((m+p)\varpi)) \ar[r] \ar[d]_{\psi_{m+p}} & C_k(W_Q(m\varpi))\ar[r] \ar[d]_{\psi_m}&C_k(W_Q((m+1)\varpi))\ar[r]\ar[d]_{\psi_{m+1}}&0\\
   0\ar[r] &[W_Q((m+p)\varpi)]
   \ar@{^{(}->}[r]
   &[W_Q(m\varpi)] \ar[r]^{[C_k(\varphi)]} &[W_Q((m+1)\varpi)]\ar[r]& 0.
  }
\end{equation}
We show that $\psi_m$ ($m\in\ZZ$) are isomorphisms so that the lower complex in \eqref{eqninside} is exact.

By the injection of the lower exact sequence in \eqref{eqnoutside}, we have the sequence of natural embeddings
$$
C_k(W_Q(m\varpi))\subset C_k(W_Q((m-p)\varpi))\subset C_k(W_Q((m-2p)\varpi))\subset\cdots.
$$
Therefore, the direct limit $X=\bigcup_{n\geq0} 
C_k(W_Q((m-pn)\varpi))$ makes sense.
Recall that $\bigcup_{n\geq0} W_Q((m-pn)\varpi)=V_{Q+m\varpi}$, so that
$\bigcup_{n\geq0} [W_Q((m-pn)\varpi)]=C_k(V_{Q+m\varpi})$.
By taking the limit of $\{\psi_{m-pn}\}_n$, we have a surjective homomorphism
$\psi:X\ra C_k(V_{Q+m\varpi})$.
By comparing the characters of both sides, we see that $\psi$ is an injection and therefore, $\psi_m$ is also injective.
This proves that $\psi_m$ is an isomorphism.
Moreover, it follows by \eqref{eqn:calpqp} that the lower complex in \eqref{eqninside} is realized inside $C_k(V_P)$.
In summary, we have the following.

\begin{proposition}\label{propemb}
The quotient module $C_k(W_Q(m\varpi))$ is naturally embedded in the module $C_k(V_{Q+m\varpi})$ as  $[W_Q(m\varpi)]$.
The union $\bigcup_{n\geq 0}C_k(W_Q((m-pn)\varpi))$ coincides with $C_k(V_{Q+m\varpi})$.
Moreover, the lower complex in \eqref{eqninside} is an exact
sequence inside $C_k(V_P)$.
\end{proposition}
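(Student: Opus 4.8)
The plan is to identify $\psi_m$ with the map on coinvariants induced by the inclusion $W_Q(m\varpi)\subset V_{Q+m\varpi}$, to prove that it is an isomorphism onto $[W_Q(m\varpi)]$ by a direct-limit argument along the chain $W_Q(m\varpi)\subset W_Q((m-p)\varpi)\subset\cdots$, and then to transport the exactness of the top row of \eqref{eqninside} to the bottom row through the resulting isomorphism of complexes.

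First I would collect the structural inputs. The inclusions $W_Q((m-pn)\varpi)\subset W_Q((m-p(n+1))\varpi)$ induce maps $C_k(W_Q((m-pn)\varpi))\to C_k(W_Q((m-p(n+1))\varpi))$ that are precisely the injections of the exact sequences of the form \eqref{fibrr}, transported by the isomorphisms $C_k(\chi_\bullet)$ of \eqref{eqnoutside}; hence they are injective, and $X=\bigcup_{n\ge0}C_k(W_Q((m-pn)\varpi))$ is an increasing union of the $C_k(W_Q((m-pn)\varpi))$. Since the maps $\psi_{m-pn}$ are compatible with these inclusions, they assemble into a single map $\psi\colon X\to C_k(V_{Q+m\varpi})$. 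Using $\bigcup_{n\ge0}W_Q((m-pn)\varpi)=V_{Q+m\varpi}$, and therefore $\bigcup_{n\ge0}[W_Q((m-pn)\varpi)]=C_k(V_{Q+m\varpi})$, the map $\psi$ is surjective, and $\psi_m$ is surjective onto $[W_Q(m\varpi)]$ by construction.

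The crux is the injectivity of $\psi$, which I would prove directly. Let $\xi\in\ker\psi$ be represented by $v\in W_Q((m-pn)\varpi)$. Then the class of $v$ in $C_k(V_{Q+m\varpi})$ vanishes, so $v\in J_k(V_{Q+m\varpi})=\sum_{j\ge0}e^\alpha(-k-j)V_{Q+m\varpi}$; write $v=\sum_{j\ge0}e^\alpha(-k-j)w_j$ as a finite sum with $w_j\in V_{Q+m\varpi}$. Because $V_{Q+m\varpi}=\bigcup_{n'}W_Q((m-pn')\varpi)$, choose $N\ge n$ with $v$ and all $w_j$ in $W_Q((m-pN)\varpi)$; then $v\in J_k(W_Q((m-pN)\varpi))$, so the class of $v$ in $C_k(W_Q((m-pN)\varpi))$ is zero. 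Since the transition map $C_k(W_Q((m-pn)\varpi))\hookrightarrow C_k(W_Q((m-pN)\varpi))$ is injective and carries $\xi$ to that vanishing class, $\xi=0$. Hence $\psi$ is an isomorphism; composing with the inclusion $C_k(W_Q(m\varpi))\hookrightarrow X$ shows each $\psi_m$ is injective, and together with its surjectivity onto $[W_Q(m\varpi)]$ this yields $\psi_m\colon C_k(W_Q(m\varpi))\xrightarrow{\ \sim\ }[W_Q(m\varpi)]$.

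To conclude, every vertical arrow $\psi_\bullet$ of \eqref{eqninside} is an isomorphism, so its bottom row is isomorphic to its top row, which is the bottom row of \eqref{eqnoutside} and is exact (being isomorphic via $C_k(\chi_\bullet)$ to \eqref{fibrr}); hence the bottom row of \eqref{eqninside} is exact. For the ambient realization, $e^\alpha$ preserves the charge cosets $V_{Q+i\varpi}$, so $C_k(V_P)=\bigoplus_{i=0}^{p-1}C_k(V_{Q+i\varpi})$ as recorded in \eqref{eqn:calpqp}; thus $[W_Q(m\varpi)]\cong C_k(W_Q(m\varpi))$ sits inside $C_k(V_{Q+m\varpi})$, hence inside $C_k(V_P)$, and the entire lower complex of \eqref{eqninside} lives there. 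I expect the main obstacle to be exactly the direct-limit injectivity of $\psi$: one must check that the finitely many summands of an expression $v=\sum_j e^\alpha(-k-j)w_j$ in $J_k(V_{Q+m\varpi})$ can be absorbed into a single $W_Q((m-pN)\varpi)$ — which is where $\bigcup_n W_Q((m-pn)\varpi)=V_{Q+m\varpi}$ and the injectivity of the transition maps (i.e.\ \Cref{propfibrr}) both enter; a comparison of bigraded characters gives an alternative route to the same conclusion.
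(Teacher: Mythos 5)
Your proposal is correct and follows the same architecture as the paper: identify $\psi_m$ as the map induced by $W_Q(m\varpi)\subset V_{Q+m\varpi}$, note it surjects onto $[W_Q(m\varpi)]$ by construction, assemble the $\psi_{m-pn}$ into $\psi\colon X\to C_k(V_{Q+m\varpi})$ using the injectivity of the transition maps coming from \eqref{eqnoutside}, use $\bigcup_n W_Q((m-pn)\varpi)=V_{Q+m\varpi}$ for surjectivity, and transport exactness from the lower row of \eqref{eqnoutside} to the lower row of \eqref{eqninside}, placing it inside $C_k(V_P)$ via \eqref{eqn:calpqp}. The one place where you genuinely diverge is the injectivity of $\psi$: the paper disposes of it by comparing the bigraded characters of $X$ and $C_k(V_{Q+m\varpi})$ (a surjection between spaces with equal, finite-dimensional graded pieces must be injective), whereas you argue at the level of elements — a representative $v\in W_Q((m-pn)\varpi)$ lying in $J_k(V_{Q+m\varpi})$ has a finite expression $\sum_j e^\alpha(-k-j)w_j$, and since $V_{Q+m\varpi}$ is the increasing union of the $W_Q$-submodules $W_Q((m-pN)\varpi)$, the finitely many $w_j$ can be absorbed into one stage, so $v\in J_k(W_Q((m-pN)\varpi))$ and its class dies at stage $N$, hence already in the limit. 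Your absorption argument is valid and is in fact more self-contained: it does not require knowing or matching the two characters (and yields the character identity as a byproduct), at the modest cost of the explicit element chase; conversely, the paper's character comparison is shorter on the page but leans on the computed characters of $\fd k{m-pn}$ and of $C_k(V_{Q+m\varpi})$. Both arguments rely identically on the injectivity of the transition maps supplied by \eqref{eqnoutside} (i.e.\ by \Cref{propfibrr}), so on that point your proof inherits exactly the same implicit hypotheses as the paper's.
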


\subsection{Commutativity of invariance and coinvariance construction}

Let $k$ and $m$ be integers.

\begin{theorem}\label{thmcomm}
There is an isomorphism
$$
I_{e^\varpi,-m/p}(C_{e^\alp,k}(V_{m\varpi+Q}))
\cong C_{e^\alp,k}(I_{e^\varpi,-m/p}(V_{m\varpi+Q}))
$$
of $W_Q$-modules.
\end{theorem}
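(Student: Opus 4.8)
The plan is to realize both sides of the claimed isomorphism as submodules of the single ambient module $C_{e^\alpha,k}(V_P)$ and to compare them along the filtration supplied by \Cref{propemb}. First I would identify the right-hand side: applying the direct-limit part of \Cref{propinv} with the generator $e^\varpi$ in place of $e^\beta$ (so that $g=(\varpi,\varpi)=1/p$, $\beta^\circ=\alpha$, $L^\circ=Q$) and with index $k=-m/p$ gives $I_{e^\varpi,-m/p}(V_{m\varpi+Q})=W_Q(m\varpi)$, so that the right-hand side equals $C_{e^\alpha,k}(W_Q(m\varpi))$. By \Cref{propemb} this module is realized inside $C_{e^\alpha,k}(V_{m\varpi+Q})\subseteq C_{e^\alpha,k}(V_P)$ as $[W_Q(m\varpi)]$, and one has the increasing exhaustion $C_{e^\alpha,k}(V_{m\varpi+Q})=\bigcup_{n\ge0}R_n$ with $R_n:=[W_Q((m-pn)\varpi)]$ and $R_0=[W_Q(m\varpi)]$. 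Since $e^\varpi$ commutes with $e^\alpha$ (up to the sign twist $\varepsilon$ of \Cref{subsecRRprin}, which changes neither kernels nor cokernels), the modes $e^\varpi(s)$ descend to $C_{e^\alpha,k}(V_P)$, so the invariance construction $I_{e^\varpi,-m/p}(\,\cdot\,)$ is defined on submodules there. One inclusion is then immediate: if $w\in W_Q(m\varpi)$ then $e^\varpi(-m/p+j)w=0$ for all $j\ge0$ (by \Cref{propinv}, or directly from \eqref{loclat}, since $e^\varpi$ commutes with $W_Q=\langle e^\alpha\rangle$ and annihilates $e^{m\varpi}$ in these degrees), and this persists in the quotient, so $[W_Q(m\varpi)]\subseteq I_{e^\varpi,-m/p}(C_{e^\alpha,k}(V_{m\varpi+Q}))$.

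For the reverse inclusion I would prove $I_{e^\varpi,-m/p}(R_n)=R_0$ for every $n\ge0$ by induction on $n$; as $I_{e^\varpi,-m/p}(\,\cdot\,)$ is an intersection of kernels it commutes with the directed union $\bigcup_n R_n$, so passing to the union over $n$ then gives $I_{e^\varpi,-m/p}(C_{e^\alpha,k}(V_{m\varpi+Q}))=R_0=[W_Q(m\varpi)]\cong C_{e^\alpha,k}(W_Q(m\varpi))$. The base case $n=0$ is the inclusion just noted together with the trivial $I_{e^\varpi,-m/p}(R_0)\subseteq R_0$. For the inductive step I would apply the coinvariance functor $C_{e^\alpha,k}(\,\cdot\,)$ to the RR exact sequence of \Cref{prop:fund} (equivalently, the one exhibited in \Cref{subsecRRprin}) for the lattice principal subspaces $W_Q(s\varpi)$ at $s=m-pn$, namely
$$
0\to W_Q((m-pn+p)\varpi)\to W_Q((m-pn)\varpi)\xrightarrow{\,e^\varpi(-m/p+n-1)\,}W_Q((m-pn+1)\varpi)\to 0 ;
$$
by \Cref{propemb} (compare the lower row of \eqref{eqninside}) its image under $C_{e^\alpha,k}$ is the exact sequence
$$
0\to R_{n-1}\to R_n\xrightarrow{\,[e^\varpi(-m/p+n-1)]\,}[W_Q((m-pn+1)\varpi)]\to 0
$$
inside $C_{e^\alpha,k}(V_P)$. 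For $n\ge1$ the exponent $-m/p+n-1$ lies in $-m/p+\Znn$, so $e^\varpi(-m/p+n-1)$ is one of the modes whose common kernel defines $I_{e^\varpi,-m/p}$; hence $I_{e^\varpi,-m/p}(R_n)\subseteq\ker{[e^\varpi(-m/p+n-1)]|_{R_n}}=R_{n-1}$, and therefore $I_{e^\varpi,-m/p}(R_n)=I_{e^\varpi,-m/p}(R_{n-1})=R_0$ by the induction hypothesis. All maps involved are $W_Q$-linear, so the resulting identification is an isomorphism of $W_Q$-modules.

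The hard part will be the bookkeeping in the inductive step: one must verify that the surjection in the $C_{e^\alpha,k}$-image of the RR exact sequence at $s=m-pn$ is exactly the restriction to $R_n$ of the mode $e^\varpi(-m/p+n-1)$ of $C_{e^\alpha,k}(V_P)$ — this is precisely why the shift parameter $-m/p$ of the invariance construction appears in the statement — and that after passing to the quotient this map still has kernel exactly $R_{n-1}$, which is the exactness provided by \Cref{propemb}. The remaining ingredients — that $e^\varpi$ and $e^\alpha$ genuinely commute once the twist $\varepsilon$ is inserted and that $\varepsilon$ affects neither $I_{e^\varpi,\,\cdot\,}$ nor $C_{e^\alpha,\,\cdot\,}$, and that the charge and conformal-weight gradings match so that all the sequences really live inside $C_{e^\alpha,k}(V_P)$ — should be routine given \Cref{subseclattice}--\Cref{subsecic} and \Cref{secps}.
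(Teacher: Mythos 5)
Your proof is correct and follows essentially the same route as the paper: identify the right-hand side with $C_{e^\alpha,k}(W_Q(m\varpi))$ via \Cref{propinv}, use \Cref{propemb} to view everything inside $C_{e^\alpha,k}(V_{m\varpi+Q})=\bigcup_n R_n$, obtain one inclusion immediately, and get the other by observing that the surjection in the exact sequence of \eqref{eqninside} at level $\ell=m-pn$ is precisely the mode $e^\varpi(-m/p+n-1)$, one of the operators whose common kernel defines $I_{e^\varpi,-m/p}$. You simply repackage the paper's push-an-element-up argument (induction on $\ell$) as the cleaner statement $I_{e^\varpi,-m/p}(R_n)=R_0$ for all $n$; the underlying ingredients and structure are identical.
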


\begin{proof}
The right-hand side is $C_{e^\alp,k}(W_Q(m\varpi))$ by \Cref{propinv}.
By \Cref{propemb}, we have the natural embedding $C_{e^\alp,k}(W_Q(m\varpi))\subset C_{e^\alp,k}(V_{m\varpi+Q})$.
Let us set the left-hand side as $V=I_{e^\varpi,-m/p}(C_{e^\alp,k}(V_{m\varpi+Q}))$.
We clearly have $V\supset C_{e^\alp,k}(W_Q(m\varpi))$ inside $C_{e^\alp,k}(V_{m\varpi+Q})$. It remains to show the opposite inclusion.
Let $v$ be an element of $V$. Then we have $\ell\in\ZZ$ 
such that $v\in C_{e^\alp,k}(W_Q(\ell\varpi))$ by \Cref{propemb} again.
We show $v\in C_{e^\alp,k}(W_Q(m\varpi))$ by induction on $\ell$.
If $\ell\geq m$, we have nothing to prove.
Let us  suppose $\ell<m$. 
In this case, we have $e^\varpi(-\ell/p-1)v=0$ since 
$-\ell/p-1\geqc0 -m/p$.
By \eqref{eqnoutside}, we have $v\in C_{e^\alp,k}(W_Q((\ell+p)\varpi))$, which completes
the proof.
\end{proof}

The left-hand side of \Cref{thmcomm} has the form:
\begin{align*}
I_{e^\varpi,-m/p}(C_{e^\alp,k}(V_{m\varpi+Q}))&\cong
(C_{e^\varpi,-m/p-1/p+2}(I_{e^\alp,k+p-2}(V_{-m\varpi+Q})))^\vee\\
&=(C_{e^\varpi,-(m+1)/p+2}(W_P(-(k+p-2)\varpi)^{(Q-m\varpi)}))^\vee,
\end{align*}
where the first and second equalities follow from 
\eqref{dpcdp} and \eqref{eqn:ikvpdecomp}, respectively.

\begin{corollary} \label{corcomm}
For any integers $m$ and $k$, we have
$$
C_{e^\varpi,-(m+1)/p+2}\bigl(W_P(-(k+p-2)\varpi)^{(Q-m\varpi)}\bigr)
\cong C_{e^\alp,k}(W_Q(m\varpi))^\vee.
$$
\end{corollary}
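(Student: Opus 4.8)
The plan is to read the desired isomorphism off the two displays that immediately precede the statement and then pass to restricted duals. First, combining \Cref{thmcomm} with \Cref{propinv} — the latter identifying $I_{e^\varpi,-m/p}(V_{m\varpi+Q})$ with $W_Q(m\varpi)$, exactly as in the proof of \Cref{thmcomm} — gives an isomorphism of $W_Q$-modules
$$
C_{e^\alp,k}(W_Q(m\varpi)) \cong I_{e^\varpi,-m/p}\bigl(C_{e^\alp,k}(V_{m\varpi+Q})\bigr).
$$
Next, the displayed computation right before the corollary — which applies \eqref{dpcdp} to convert the outer invariance construction into a coinvariance construction on a restricted dual, and \eqref{eqn:ikvpdecomp} to identify $I_{e^\alp,k+p-2}(V_{-m\varpi+Q})$ with $W_P(-(k+p-2)\varpi)^{(Q-m\varpi)}$ — rewrites the right-hand side as $\bigl(C_{e^\varpi,-(m+1)/p+2}(W_P(-(k+p-2)\varpi)^{(Q-m\varpi)})\bigr)^\vee$. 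Chaining the two isomorphisms yields
$$
C_{e^\alp,k}(W_Q(m\varpi)) \cong \bigl(C_{e^\varpi,-(m+1)/p+2}(W_P(-(k+p-2)\varpi)^{(Q-m\varpi)})\bigr)^\vee .
$$

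It then remains to apply $(\cdot)^\vee$ to both sides and simplify the double dual on the right. For this I would note that all modules appearing here carry bigradings with finite-dimensional bigraded pieces — this follows from the combinatorial bases for the lattice principal subspaces, from \Cref{thmfibcirc}-type bases for the $C_k(\cdot)$ quotients, and from the character formulas set up in \Cref{secswitch} — so the canonical map $N\to (N^\vee)^\vee$ is an isomorphism, and by the functoriality of the restricted-dual construction recalled in \Cref{subsecdual} it is an isomorphism of $W_Q$-modules. (One should also record that $W_P(-(k+p-2)\varpi)^{(Q-m\varpi)}$ is a $W_Q$-submodule of $V_P$ by \eqref{eqnwpdecomp}, so that its coinvariant quotient $C_{e^\varpi,\,\cdot}(\cdot)$ is a $W_Q$-module, matching the category in which the claim is stated.) Dualizing the last display therefore produces exactly
$$
C_{e^\alp,k}(W_Q(m\varpi))^\vee \cong C_{e^\varpi,-(m+1)/p+2}\bigl(W_P(-(k+p-2)\varpi)^{(Q-m\varpi)}\bigr),
$$
which is the assertion.

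There is no genuine obstacle here; the construction is essentially formal once \Cref{thmcomm} is in hand. The only points that need a little care are (i) the bookkeeping of the index shifts in the subscripts of $C$, namely the $-(m+1)/p+2$ and the $k+p-2$, which must be compatible with the conformal-weight shift built into \eqref{contralattice} and \eqref{dpcdp} — but this is precisely what is carried out in the display before the corollary, so citing it suffices — and (ii) the verification that double duality and restriction of duals behave as claimed, which reduces to the finite-dimensionality of the graded pieces and the general nonsense of \Cref{subsecdual}.
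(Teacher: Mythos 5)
Your proof is correct and follows the paper's own route: the paper derives the corollary precisely by combining \Cref{thmcomm} (whose right-hand side is identified with $C_{e^\alp,k}(W_Q(m\varpi))$ via \Cref{propinv}) with the displayed rewriting of the left-hand side through \eqref{dpcdp} and \eqref{eqn:ikvpdecomp}, and then dualizing. Your additional remarks about finite-dimensionality of bigraded pieces and the $W_Q$-module structure of the coinvariant quotient are just the routine hypotheses implicit in the paper's statement.
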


\subsection{The switching isomophism}\label{subsecswitch}
Recall the quotient $F(p)$-module $\rf{k/p}{m/p}$.
We evidently have
$$
\rf{k/p}{\frac mp}\cong C_{e^\varpi, k/p}(W_P(m\varpi))^{((k+1)\varpi+Q)}.
$$

Then \Cref{corcomm} with $n=-m$ implies the following corollary.
Let us consider the normalized bigradings:
$F(p)=F(p;1,p/2)$ and $F(1/p)=F(1/p;1/p,1/2p)$.

\begin{corollary}\label{corswitch}
For any integers $m$ and $k$ and complex numbers $\chm$ and $\cwm$, there is an isomorphism
\begin{equation*}
\fd k{m;\chm+\frac mp,\cwm+\frac{m^2}{2p}}^\vee\cong 
\rf{-m'/p}{-\frac{k'}p;-\chm-\frac{k'}p,\cwm+\frac{k'^2}{2p}}
\end{equation*}
of bigraded $F(p;1,p/2)$-modules, where 
$m'=m-2p+1$ and $k'=k+p-2$.
\end{corollary}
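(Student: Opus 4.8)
\emph{Plan.} The plan is to transport both sides of the asserted isomorphism into the lattice model of \Cref{subsecprepq} and then read it off from \Cref{corcomm}. For the left-hand side, recall from \Cref{secfib} that $\fd km\cong C_k(\M m)=C_{e^\alp,k}(\M m)$ as $F(p)$-modules, and from \Cref{subsecprepq} that $\M m\cong W_Q(m\varpi)$ under $a\mapsto e^\alp$; hence $\fd km^{\vee}\cong C_{e^\alp,k}\bigl(W_Q(m\varpi)\bigr)^{\vee}$, which is exactly the right-hand side of \Cref{corcomm}. For the right-hand side of the statement I would invoke the identification $\rf{K/p}{M/p}\cong C_{e^\varpi,K/p}(W_P(M\varpi))^{((K+1)\varpi+Q)}$ recorded just before \Cref{corswitch} (with its indices renamed $K,M$). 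So the proof reduces to choosing $K,M$ so that this coincides with the left-hand side of \Cref{corcomm}, and then to matching the normalized bigradings.

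\emph{Parameter matching.} Set $m'=m-2p+1$, $k'=k+p-2$ and take $K=-m'$, $M=-k'$. Then $K/p=(-m+2p-1)/p=-(m+1)/p+2$, and since $p\varpi=\alp\in Q$ one has $(K+1)\varpi+Q=(-m+2p)\varpi+Q=-m\varpi+2\alp+Q=Q-m\varpi$, while $M\varpi=-(k+p-2)\varpi$. Hence
$$\rf{-m'/p}{-k'/p}\cong C_{e^\varpi,-(m+1)/p+2}\bigl(W_P(-(k+p-2)\varpi)\bigr)^{(Q-m\varpi)},$$
which is the left-hand side of \Cref{corcomm}. Combining the two displays with the identification $W_Q\cong F(p)$, $a\mapsto e^\alp$, gives the isomorphism $\fd km^{\vee}\cong \rf{-m'/p}{-k'/p}$ of $F(p)$-modules.

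\emph{Bigradings --- the main obstacle.} It remains to verify that, with the fixed normalizations $F(p)=F(p;1,p/2)$ and $F(1/p)=F(1/p;1/p,1/2p)$, this isomorphism carries the bigrading of $\fd k{m;\chm+\frac mp,\cwm+\frac{m^2}{2p}}^{\vee}$ to that of $\rf{-m'/p}{-\frac{k'}p;-\chm-\frac{k'}p,\cwm+\frac{k'^2}{2p}}$; this is where the genuine work lies, though it is routine once the dictionary is laid out. The ingredients I would assemble are: (i) the bigraded forms of the lattice-model isomorphisms $\M m\cong W_Q(m\varpi)$ and $\Mcirc{m/p}\cong W_P(m\varpi)$ from \Cref{subsecprepq}, refined as in \Cref{subsecRRprin} so that $W_Q(m\varpi)$ and $W_P(m\varpi)$ carry the conformal-weight/charge bigrading of $V_\ah$ and the parameters $(\chm,\cwm)$ ride along as the honest $(S,E)$-shift; (ii) the conformal weights $\wt(e^\alp)=p/2$, $\wt(e^\varpi)=1/2p$, $\wt(e^{c\varpi})=c^2/2p$, which produce the quadratic terms $\frac{m^2}{2p}$ and $\frac{k'^2}{2p}$ (the conformal weights of $e^{m\varpi}$ and $e^{-k'\varpi}$); (iii) the shift $I_k(R)^{\vee}\cong C_{k-g+2}(R^{\vee})$ of \eqref{dpcdp}, equivalently the explicit transpose \eqref{contralattice}, which is the source of the corrections $k'=k+p-2$ (the $e^\alp$ side, $g=p$) and $K=-m'$, i.e.\ $K/p=-m/p-1/p+2$ (the $e^\varpi$ side, $g=1/p$); and (iv) $\ch[M^{\vee}](z,q)=\ch[M](z^{-1},q)$, which reverses the sign of the charge exponent and so turns $\chm$ into $-\chm$ and $k'/p$ into $-k'/p$. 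The delicate point --- and what I would be most careful about --- is making the charge-coset relabelling $Q-m\varpi\leftrightarrow Q+m\varpi$ (legitimate because $2p\varpi=2\alp\in Q$) compatible with the $z\mapsto z^{-1}$ flip from restricted duality, and checking that the linear-in-length parts of the weight gradings on $\fd km$, on $W_Q(m\varpi)\subset V_\ah$ and on $\rf{-m'/p}{-k'/p}\subset\Mcirc{-k'/p}$ all agree after the $(-g+2)$ shift. Substituting the chosen values of $K,M,m',k'$ into (i)--(iv) then yields precisely the two shift parameters in the statement.
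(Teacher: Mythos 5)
Your proposal is correct and follows the same route as the paper. The paper's own derivation is in fact terser: it records only the identification $\rf{k/p}{\frac mp}\cong C_{e^\varpi,k/p}(W_P(m\varpi))^{((k+1)\varpi+Q)}$ and then says ``\Cref{corcomm} implies the following corollary,'' leaving all parameter matching and the bigrading check implicit. Your parameter substitution $K=-m'$, $M=-k'$, together with the coset identity $(-m'+1)\varpi+Q=Q-m\varpi$ (using $2p\varpi\in Q$), is exactly what makes the left side of \Cref{corcomm} coincide with the lattice-model form of $\rf{-m'/p}{-k'/p}$, and the reduction of the right side of \Cref{corcomm} to $\fd km^\vee$ via $\fd km\cong C_k(\M m)\cong C_{e^\alp,k}(W_Q(m\varpi))$ is the same identification the paper uses. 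Your four-point checklist for the bigradings (lattice-model weight/charge gradings of $W_Q(m\varpi)$ and $W_P(m\varpi)$; conformal weights $(\gamma,\gamma)/2$ of $e^\gamma$ producing the quadratic shifts $m^2/2p$ and $k'^2/2p$; the $k\mapsto k-g+2$ shift from \eqref{dpcdp}/\eqref{contralattice}; the $z\mapsto z^{-1}$ flip of restricted duality producing the sign change on $S$) is a correct and, if anything, more explicit account of what the paper leaves to the reader.
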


In particular, $\rf{k/p}{m/p}$ is cogenerated by a single cogenerator.

We now derive Fibonacci exact sequence \eqref{fibrr2}  from a RR exact sequence for EF-type modules \Cref{corefrr}.
Note that \Cref{corefrr} is induced from \eqref{ourintro-} with $g=1/p$.
Let $m,k,m',k'$ be as above.
By using \Cref{corefrr}, we have the exact sequence
\begin{align*}
&0\ra \rf{-m'/p}{\frac{-k'+1}p;\frac{-k'+1}p,\frac{(k'-1)^2}{2p}}
\ra \rf{-m'/p}{-\frac{k'}p;-\frac{k'}p,\frac{k'^2}{2p}}\\
&\qquad\ra \rf{-m'/p}{\frac{-k'}p+1;-\frac{k'}p,\frac{k'^2}{2p}}\ra0
\end{align*}
of bigraded $F(p;1,p^2/2)$-modules.
By \Cref{corswitch}, it is equivalent to the exact sequence
$$
0\ra \fd {k-1}{m;\frac mp, \frac{m^2}{2p}}^\vee
\ra \fd k{m;\frac mp, \frac{m^2}{2p}}^\vee
\ra \fd{k-p}{m;\frac mp+1, \frac{m^2}{2p}+k+\frac p2-2}^\vee\ra0.
$$
Since $\vee$ is a contravariant functor, by applying $\vee$ to 
it, we have the exact sequence
\begin{equation}\label{fibrr2p}
0\ra \fd{k-p}{m;\frac mp+1, \frac{m^2}{2p}+k+\frac p2-2}
\xrightarrow {f_1} \fd k{m;\frac mp, \frac{m^2}{2p}}
\xrightarrow{f_2} \fd {k-1}{m;\frac mp, \frac{m^2}{2p}}
\ra0.
\end{equation}
We now show that it is equivalent to \eqref{fibrr2}.
It suffices to show that $f_1(v_{k-p,m})$ is a non-zero multiple
of $a(-m-1)v_{k,m}$ and $f_2(v_{k,m})$ a non-zero multiple of $v_{k-1,m}$.
We may track the invariance and coinvariance construction 
in \Cref{thmcomm} to show this in principle.
We here employ the charge grading in \eqref{fibrr2p}.
Note that the generator $v_{n,\ell}$ of  $\fd n\ell$ ($n,\ell\in\ZZ$)
is a unique lowest charge vector of $\fd n\ell$ up to scalar multiples (cf.~\Cref{subsecfreefg}).
Let us write \eqref{fibrr2p} as $0\ra M_1\xrightarrow{f_1} M_2 \xrightarrow{f_2} M_3\ra 0$.
Since the  lowest charges of $M_2$ and $M_3$ coincide with each other,
the surjection $f_2$ sends $v_{k,m}$ to a scalar multiple of $v_{k-1,m}$.
Therefore,  $f_2$ coincides with a scalar multiple of the surjection in  \eqref{fibrr2}.
It then follows that $\ker{f_2}$ coincides with the image of the injection in \eqref{fibrr2}.
Hence, $a(-m-1)v_{k,m}\in M_2$ is the unique lowest charge vector of $\ker{f_2}$ up to constant multiples.
Since $\ker{f_2}=\im(f_1)$ by the exactness, by using the same argument, we see that $f_1(v_{k-p,m})$ is  a scalar multiple of $a(-m-1)v_{k,m}$, which completes the proof.

Thus, we see that \eqref{fibrr2} is induced from \eqref{ourintro-} with $g=1/p$.

Note that we can obtain an exact sequence for RF-type modules by applying the restricted dual functor to \eqref{fibrr} and it should be called a Fibonacci type exact sequence for RF-type modules.

%

\subsection{Character identity}
Let $m$ and $k$ be integers. 
Let us consider the character identity 
\begin{equation}\label{eqndualchar}
\ch\left[\fd k{m;\chm+\frac mp,\cwm+\frac{m^2}{2p}}\right](z^{-1},q)
= 
\ch\left[\rf{-m'/p}{-\frac{k'}p;-\chm-\frac{k'}p,\cwm+\frac{k'^2}{2p}}\right](z,q),
\end{equation}
which follows from \Cref{corswitch}.
We can compute the characters of RF-type modules by using \Cref{thmfibcirc} and they are described in \Cref{appendixchar}.
By dividing out the common factors $z^Sq^E$ from the characters, we have the following.

\begin{corollary}\label{corchar}
Let $p$ be a positive integer and $m,k$ integers.
Let $i$ be the remainder of $-m+k-2$ divided by $p$.
We then have
\begin{align*}
&z^{-m/p}q^{m^2/2p}
\sum_{r\geq 0}
q^{pr^2/2+mr}
\begin{bmatrix}
k-m+p-2+(1-p)r\\
r
\end{bmatrix}_q
z^{-r}
\\
&\quad =
z^{(-k'+i)/p}q^{(k'^2+i^2-2k'i)/2p}
\sum_{r\geq0}
q^{pr^2/2+(i-k')r}
\begin{bmatrix}
\frac{-m'+k'-i+1}p -2+i+(p-1)r
\\ i+pr
\end{bmatrix}_q
z^{r},
\end{align*}
 where
$m'=m-2p+1$ and $k'=k+p-2$.
\end{corollary}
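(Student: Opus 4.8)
The plan is to obtain the character identity \eqref{eqndualchar} from \Cref{corswitch} and then to evaluate both of its sides explicitly as $q$-series; dividing out the common prefactor $z^{-\chm}q^{\cwm}$ then gives the assertion. The first step is immediate: \Cref{corswitch} provides an isomorphism of bigraded $F(p;1,p/2)$-modules between $\fd k{m;\chm+\frac mp,\cwm+\frac{m^2}{2p}}^\vee$ and $\rf{-m'/p}{-\frac{k'}p;-\chm-\frac{k'}p,\cwm+\frac{k'^2}{2p}}$, hence an equality of characters, and the restricted-dual character relation recalled in \Cref{subsecdual} identifies $\ch$ of the left module with $\ch[\fd k{m;\dots}](z^{-1},q)$; this is \eqref{eqndualchar}.

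For the left-hand side I would substitute into the explicit character formula of $\fd km$ from \Cref{subsecfreefda}. Taking $C=1$, $D=p/2$ (so the algebra is $F(p;1,p/2)$) and $S=\chm+\frac mp$, $E=\cwm+\frac{m^2}{2p}$, this reads $z^{\chm+m/p}q^{\cwm+m^2/2p}\,F_{p,k+p-2-m}(zq^m,q)$; replacing $z$ by $z^{-1}$ and expanding $F_{p,\ell}(w,q)=\sum_{r\ge0}w^rq^{pr^2/2}\qbinom{\ell-(p-1)r}{r}_q$ at $w=z^{-1}q^m$, $\ell=k+p-2-m$, yields $z^{-\chm}q^{\cwm}$ times the left-hand side of the claimed identity. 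This step is routine.

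For the right-hand side I would compute $\ch[\rf{-m'/p}{-k'/p}]$ with the prescribed normalization directly from the combinatorial basis of \Cref{thmfibcirc} (equivalently \Cref{thmfibcircvac}). A basis monomial $x(-n_s)\cdots x(-n_1)v_{-k'/p}$ of length $s$ satisfies the difference-$\frac1p$ condition together with the truncation bound $n_s<-m'/p$; the substitution $n_j=1-\frac{k'}p+\frac{j-1}p+\mu_j$ converts the data into a weakly increasing sequence $0\le\mu_1\le\dots\le\mu_s$ of non-negative integers bounded above, whose generating function in $q^{\mu_1+\dots+\mu_s}$ is a single $q$-binomial coefficient with lower entry $s$. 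The decisive point is that the charge-component definition of $\rf{-m'/p}{-k'/p}$ (equivalently, the appropriate summand of the decomposition \eqref{efdecomp}) forces $s$ to run over the progression $i+p\Znn$ with $i$ the residue of $-m+k-2$ modulo $p$; together with $k'=k+p-2$ and $m'=m-2p+1$ this makes all the occurring bounds integral, so that after reindexing $s=i+pr$ the $q$-binomial becomes $\qbinom{\frac{-m'+k'-i+1}{p}-2+i+(p-1)r}{i+pr}_q$, and carrying the normalizations $\chb^\circ=\frac1p$, $\cwb^\circ=\frac1{2p}$, $S=-\chm-\frac{k'}p$, $E=\cwm+\frac{k'^2}{2p}$ through the $z$- and $q$-degrees of the monomials produces exactly $z^{-\chm}q^{\cwm}$ times the right-hand side, with prefactor $z^{(-k'+i)/p}q^{(k'^2+i^2-2k'i)/2p}$ and summand exponent $q^{pr^2/2+(i-k')r}$. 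This computation is the content of \Cref{appendixchar}.

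Equating the two evaluations through \eqref{eqndualchar} and cancelling $z^{-\chm}q^{\cwm}$ finishes the proof. I expect the right-hand side to be the main obstacle: not any single step, but the bookkeeping of the numerous additive shifts in the $z$- and $q$-exponents coming from the normalized bigrading, and above all the verification that the lengths of the basis monomials of $\rf{-m'/p}{-k'/p}$ sweep out precisely $i+p\Znn$ with $i$ the residue of $-m+k-2$, which is why the lower entry of the $q$-binomial comes out as $i+pr$ rather than $r$.
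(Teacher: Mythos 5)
Your proposal is correct and is essentially the paper's own argument: Corollary \ref{corswitch} gives the character identity \eqref{eqndualchar}, the left side is evaluated via the explicit character $\chi_{k,p;m}^{(C,D;S,E)}$ of $\fd km$ from \Cref{subsecfreefda}, the right side via the combinatorial basis of \Cref{thmfibcirc} exactly as carried out in \Cref{appendixchar} (formula \eqref{eqndnfine}, with the charge-coset residue $i$ you identify), and the common factor $z^Sq^E$ is divided out. Nothing in your route differs from the paper beyond spelling out the bookkeeping.
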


For example let $p=2$, $k=0$ and $m=-2n$ with $n\geq 0$.
In this case, \Cref{corcomm} implies
$$
C_{e^\varpi,n+3/2} (W_P^{(Q)})
\cong C_{e^\alp,0}(W_Q(-n\alpha))^\vee.
$$
The corresponding character identity reads
$$
z^{n}q^{n^2}\sum_{r=0}^{n}q^{r^2-2nr}
\begin{bmatrix}
2n-r\\r
\end{bmatrix}_q
z^{-r}
=
\sum_{r=0}^{n}q^{r^2}\begin{bmatrix}
n+r\\2r\end{bmatrix}_q z^{r}.
$$

\section{BFL character decomposition formula}\label{seccomp}

In this section, we consider relations between our RR exact sequences and the character decomposition formula appearing in \cite{BFL} and \cite{Ke2}.

\subsection{Finite flags in free modules}
Let $p$ be a positive integer, $\chb$ a non-zero complex number
and $\cwb$ a complex number.
Let $n$ be a non-negative integer and consider a free module 
$\M {-n;\chm,\cwm}$ 
over $F(p;\chb,\cwb)$.
By applying \Cref{prop:fund} recursively, we have a flag in $\M {-n;\chm,\cwm}$ as follows.

\begin{theorem}\label{thm:flag}
For any $n\geq 0$, the RR exact sequences induce  a finite flag
in the module $\M{-n}$
where each factor is isomorphic to one of $\M 0,\M1,\ldots,\M{p-1}$.

Moreover, there is a linear isomorphism 
\begin{align*}
\M{-n;\chm,\cwm}&\cong 
\fd{-p+2}{-n;\chm,\cwm}\otimes_\CC \M{0;0,0}\\
&\quad \oplus \bigoplus_{i=1}^{p-1}
\fd{-2p+i+2}{-n;\chm+\chb,\cwm-\cwb+i}\otimes_\CC \M{i;0,0}
\end{align*}
of bigraded vector spaces.
\end{theorem}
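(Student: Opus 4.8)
The plan is to iterate the Rogers--Ramanujan exact sequence of \Cref{prop:fund} to build the flag, and then to read off the multiplicities by comparison with the corresponding exact sequence \Cref{propfibrr} for the finite-dimensional modules. For the flag I would argue by strong induction on $n \ge 0$ that $\M{-n}$ carries a finite filtration by $F(p)$-submodules each of whose successive quotients is isomorphic, up to a bigrading shift, to one of $\M 0, \ldots, \M{p-1}$. For $n = 0$ this is the trivial filtration; for $n \ge 1$, \Cref{prop:fund} with $g = p$ supplies the short exact sequence $0 \to \M{-n+p} \to \M{-n} \to \M{-n+1} \to 0$, and the index $-n+1$ either lies in $\{0,\ldots,p-1\}$ (exactly when $n=1$) or is negative and strictly larger than $-n$, in which case $\M{-n+1}$ already has such a filtration by the induction hypothesis; likewise $-n+p$ either lies in $\{0,\ldots,p-1\}$ (when $1 \le n \le p$) or is negative and strictly larger than $-n$, so that $\M{-n+p}$ has such a filtration. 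Splicing the filtrations of the sub- and quotient module through the exact sequence produces a filtration of $\M{-n}$, finite because every recursive step strictly decreases $n$.

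For the bigraded vector space isomorphism, note that over $\CC$ with finite-dimensional bigraded pieces every short exact sequence of bigraded vector spaces splits, so the filtration yields a bigraded isomorphism $\M{-n;\chm,\cwm} \cong \bigoplus_{i=0}^{p-1} N_{-n}^{(i)} \otimes_\CC \M{i;0,0}$ with $N_{-n}^{(i)}$ finite-dimensional bigraded multiplicity spaces. From the grading-preserving form of \Cref{prop:fund} one reads off the recursion $N_{-n}^{(i)} \cong N_{-n+1}^{(i)} \oplus N_{-n+p}^{(i)}[\chb,\,-n+\cwb]$ for $n \ge 1$ (the bracket denoting a shift of bidegree), with the ``initial data'' $N_m^{(i)} = \delta_{m,i}\CC$ in bidegree $(0,0)$ for $0 \le m \le p-1$. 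On the other hand \Cref{propfibrr} gives, wherever the sequence is defined, $\fd km \cong \fd k{m+1} \oplus \fd k{m+p}[\chb,\,m+\cwb]$ as bigraded vector spaces, with $\fd km$ one-dimensional in bidegree $(0,0)$ once $k \le m+1$. Comparing the two recursions, I would show by induction on $n$ that $N_{-n}^{(0)} \cong \fd{-p+2}{-n;\chm,\cwm}$ and $N_{-n}^{(i)} \cong \fd{-2p+i+2}{-n;\chm+\chb,\cwm-\cwb+i}$ for $1 \le i \le p-1$; substituting these into the decomposition gives the asserted formula.

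The main obstacle is the second step. First there is the shift bookkeeping: one must track how the bigrading shifts accumulate through the iterated splittings and verify that they combine into exactly $(\chm+\chb,\,\cwm-\cwb+i)$. Second, and more delicate, is the base case of the multiplicity recursion: the Fibonacci-type exact sequence \Cref{propfibrr} for $\fd km$ degenerates as soon as $k \le m+1$, and for the two recursions above to share the same initial data one has to read $\fd{k_i}{-n}$ (with $k_0 = -p+2$ and $k_i = -2p+i+2$) through its combinatorial character $F_{p,\,k_i+p-2+n}$, so that for the small values of $n$ where this character vanishes $\fd{k_i}{-n}$ is understood as the zero module rather than as $\CC v_{k_i,-n}$. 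This reading is checked directly against the combinatorial bases $\cB(-n)$ of \Cref{propmod} and $\cB(k_i,-n)$ from \eqref{eqn:basisfd}; once it is in place, the inductive comparison of the two recursions is routine.
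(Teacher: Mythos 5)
Your first step coincides with the paper's: the paper organizes the iterated sequences of \Cref{prop:fund} into a tree with root $\M{-n}$ and leaves among $\M0,\ldots,\M{p-1}$, which is exactly your inductive splicing. For the multiplicity spaces, however, you take a genuinely different route. The paper never compares recursions: it identifies the $i$-th multiplicity space directly, as the span of the \emph{standard branching vectors} $B_i\subset\cB(-n)$ obtained by tracking the labels of the tree, and observes that $B_0=\cB(-n)_{<-p+2}$ while $B_i=a(p-i-1)\cB(-n)_{<-2p+i+2}$ for $i\ge1$, whose images are precisely the bases $\cB(-p+2,-n)$, $\cB(-2p+i+2,-n)$ of the modules $\fd{-p+2}{-n}$, $\fd{-2p+i+2}{-n}$; this yields the identification, bigrading included, in one stroke and without invoking \Cref{propfibrr}. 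Your recursion comparison does work in the stable range, and the inductive step is clean because the shift in \Cref{prop:fund} and in \Cref{propfibrr} is literally the same $(\chb,\,m+\cwb)$; but it pushes all the content into the base cases $n\le 2p-i-1$, where \Cref{propfibrr} is unavailable and you must compute the multiplicities by hand against $\cB(-n)$ anyway --- which is essentially what the paper's basis partition does uniformly. So your route is viable but buys nothing over the direct identification, at the price of a case analysis that you only assert rather than carry out.

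Two of the ``obstacles'' you flag are in fact more than bookkeeping. First, your proposal to read $\fd{k_i}{-n}$ as the zero module when $F_{p,k_i+p-2+n}$ vanishes is a genuine correction of the statement for small $n$: with the paper's own convention ($\fd km$ is one-dimensional whenever $k\le m+1$), the displayed isomorphism fails for $0\le n\le p-2$; for instance at $n=0$ the left side is $\M{0;\chm,\cwm}$ while the right side acquires the extra summands $\fd{-2p+i+2}{0}\otimes\M{i;0,0}\cong\CC\otimes\M{i;0,0}$, $1\le i\le p-1$. Indeed the tree has no leaf of type $i\ge1$ unless $n\ge p-i$, so the true multiplicity is zero there; the paper's argument (and its examples $n=1,2,3$ for $p=2$) implicitly assumes $n$ large enough for every leaf type to occur. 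Thus your convention is the right fix, but what you then prove is this corrected form, and you must verify directly the whole window $n\le 2p-i-1$ (zero- or one-dimensional multiplicity, including its bidegree), not only the values where the character vanishes. Second, when you do the shift bookkeeping you will find that the generator of the type-$i$ multiplicity space is the branching vector $a(p-i-1)v_{-n}$, of bidegree $(\chm+\chb,\,\cwm+\cwb+i-p)$, which agrees with the stated $(\chm+\chb,\,\cwm-\cwb+i)$ only when $\cwb=p/2$ (the normalization used in all of the paper's applications); so do not expect the comparison to close up for arbitrary $\cwb$ with the shift as printed.
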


It is easy to see as follows.
We display the exact sequence in \Cref{prop:fund} as
$$
\xymatrix{
& \ar[ld]_{a(-m-1)} \M{m;\chm,\cwm} \ar[d]^=&\\
\M{m+p;\chm+\chb,\cwm+m+\cwb}&\M{m+1;\chm,\cwm}
}
$$
Here, the left down arrow is the embedding towards the opposite direction
and the vertical down arrow is the quotient.
The labels on the paths indicate how the combinatorial bases of the nodes are related.
Say, the label 
``$a(-m-1)$'' indicates that a vector $a(-n_r)\cdots a(-n_1)v_{m+p}$ of the basis $\cB(m+p)$ of the leaf $\M{m+p}$ maps to
the vector $a(-m-1)a(-n_r)\cdots a(-n_1)v_m$ of the basis $\cB(m)$ of the root $\M m$.
The label ``$=$'' indicates that a vector $a(-n_r)\cdots a(-n_1)v_m$
maps to $a(-n_r)\cdots a(-n_1)v_{m+1}$.

We connect such diagrams to make a tree such that 
 the root is $\M {-n}$ and all leaves are isomorphic to
one of $\M 0,\M 1,\ldots,\M{p-1}$.
For example, set $p=2$, $\chb=1$, $\cwb=1$ and write $e=a$. 
We then have the following trees
for $n=1,2,3$.

\medskip
$
\xymatrix{
& \ar[ld]_{e(0)} \M{-1;\chm,\cwm} \ar[d]^=&\\
\M{1;\chm+1,\cwm}&\M{0;\chm,\cwm}
}
$
$
\xymatrix{
& \ar[ld]_{e(1)} \M{-2;\chm,\cwm} \ar[d]^=&&\\
\M{0;\chm+1,\cwm-1}&\ar[ld]_{e(0)} \M{-1;\chm,\cwm} \ar[d]^=&\\
\M{1;\chm+1,\cwm}&\M{0;\chm,\cwm}
}
$

\medskip
$$
\xymatrix{
&& \ar[ld]_{e(2)} \M{-3;\chm,\cwm} \ar[dd]^=&&\\
& \ar[ld]_{e(0)} \M{-1;\chm+1,\cwm-2} \ar[d]^=&&&&&\\
\M{1;\chm+2,\cwm-2}&\M{0;\chm+1,\cwm-2}&\ar[ld]_{e(1)} 
\M{-2;\chm,\cwm} \ar[d]^=&\\
&\M{0;\chm+1,\cwm-1}&\ar[ld]_{e(0)} \M{-1;\chm,\cwm} \ar[d]^=&\\
&\M{1;\chm+1,\cwm}&\M{0;\chm,\cwm}
} 
$$

\medskip
We can find a flag in $\M {-n}$ by looking at the tree.
For example, consider the case of $n=2$ from the above trees.
We first set $F_1=\M{0;\chm+1,\cwm-1}\subset \M{-2;\chm,\cwm}$.
We have the canonical projection $\pi: \M{-2;\chm,\cwm}\ra \M{-1;\chm,\cwm}$.
We then set $F_2=\pi^{-1}(\M{1;\chm+1,\cwm})\subset \M{-2;\chm,\cwm}$.
We now have a flag
$
0=F_0\subset F_1\subset F_2\subset F_3=\M{-2;\chm,\cwm}
$
of $F(2;1,1)$-modules such that $F_1/F_0\cong \M 0$,
$F_2/F_1\cong \M1$ and $F_3/F_2\cong \M0$.

Let us consider such a flag $0=F_0\subset F_1\subset \cdots \subset F_r=\M {-n}$.
Let $\mathrm{gr}\,\M{-n;\chm,\cwm}$ denote the graded module of $\M{-n}$ with respect to the flag.
It decomposes into the sum
$$
\mathrm{gr}\, \M{-n;\chm,\cwm}\cong \bigoplus_{i=0}^{p-1}V(i)\otimes_\CC \M{i;0,0}
$$
of bigraded $F(p;\chb,\cwb)$-modules with
  multiplicity spaces $V(0),\ldots,V(p-1)$ equipped with bigradings.

Let $i$ be an element of $\{0,\ldots,p-1\}$. 
A {\em standard branching vector} of a leaf $L\cong \M i$
 is an element of $\cB(-n)$ which corresponds to the generator
 $v_i$ of $L$ if we track the paths in the tree as indicated in the labels.
Let $B_i$ be the set  of standard branching vectors  of all leaves isomorphic to $\M i$.
The linear span of $B_i$ is isomorphic to $V(i)$
as bigraded vector spaces.

For example, consider the case of $n=3$ from the above trees.
Then we have 3 and 2 leaves isomorphic to $\M0$ and  $\M1$, 
respectively, and we have
$$
B_0=\{v_{-3},  e(1)v_{-3}, e(2)v_{-3}\},\qquad 
B_1=\{e(0)v_{-3}, e(0)e(2)v_{-3}\}.
$$
For any $k,m\in\ZZ$, let us write $\cB(m)_{<k}$ the subset of $\cB(m)$ which consists of monomials \eqref{basisfm} satisfying $n_r<k$.
We see that $B_0=\cB(-3)_{<0}$ and $B_1=e(0)\cB(-3)_{<-1}$,
whose images coincide with the bases $\cB(0,-3)$ and $\cB(-1,-3)$ of $\fd 0{-3}$ and $\fd {-1}{-3}$ introduced in \Cref{subsecfreefda}, respectively.
It implies that $V(0)\cong \fd0{-3;S,E}$ and $V(1)\cong \fd{-1}{-3;\chm+1,\cwm}$ as bigraded vector spaces.
As a result, we have a bigraded linear isomorphism
$$
\M{-3;S,E}\cong \fd0{-3;S,E}\otimes_\CC \M{0;0,0}\oplus 
\fd{-1}{-3;\chm+1,\cwm}\otimes_\CC \M{1;0,0}.
$$

This is explained by using the basis $\cB(-n)$ of $\M{-n}$.
For example, if $p=2$, we have the partition
\begin{align}\label{eqnpart2}
\cB(-n)&=\bigsqcup_{n_1\geq1, n_{i+1}-n_i\geq 2}
e(-n_r)\cdots e(-n_1)\cB(-n)_{<0}
\sqcup\bigsqcup_{n_1\geq2, n_{i+1}-n_i\geq 2}
e(-n_r)\cdots e(-n_1)e(0)\cB(-n)_{<-1}
\end{align}
of $\cB(-n)$.
The sets $\cB(-n)_{<0}$ and $\cB(-n)_{<-1}$ 
bijectively map to 
the bases of $\fd{0}{-n}$ and $\fd{-1}{-n}$.
Moreover, if we apply the set of operators $
\{e(-n_r)\cdots e(-n_1)\,|\,n_1\geq1, n_{i+1}-n_i\geq 2\}$ in the first summand of \eqref{eqnpart2} to $v_0\in \M0$, we have the basis 
$\cB(0)$ of $\M0$.
Similarly, the basis $\cB(1)$ of $\M1$ coincides with 
$
\{e(-n_r)\cdots e(-n_1)\,|\,n_1\geq2, n_{i+1}-n_i\geq 2\}$ in the second summand of \eqref{eqnpart2} if applied to $v_1\in \M1$.

It readily generalizes to the case of any $p>0$ and we have \Cref{thm:flag}.

Note that \Cref{thm:flag}  gives a character decomposition of $\M{-n;\chm,\cwm}$
with respect to the characters of $\M 0,\ldots,\M{p-1}$
and $\fd km$.
Note that we may replace $\fd km$ with $\rf{-m'/p}{-k'/p}$
by using the switching isomorphism \eqref{eqndualchar}.

\subsection{Flags in lattice vertex algebra modules}\label{subsecchar}

Consider the algebras $V_Q$, $W_Q$, $W_P$ and their modules.
Let $n$ be a non-negative integer.
Recall that $W_Q(-n\varpi)\cong \M{-n;-n/p,n^2/2p}$ via the isomorphism
$W_Q\cong F(p;1,p/2)$.
It follows by \Cref{thm:flag} that the RR exact sequences
for lattice principal subspaces induce a finite flag 
\begin{equation}\label{flagforwq}
0=F_0\subset F_1\subset \cdots \subset F_r=W_Q(-n\varpi)
\end{equation}
such that each factor module $F_j/F_{j-1}$ is isomorphic to one of 
$W_Q(0), W_Q(\varpi),\ldots,W_Q((p-1)\varpi)$.
It then follows that there is a linear isomorphism
\begin{align}\label{eqn:wqfinchardecomp}
W_Q(-n\varpi)&\cong 
C_{e^\alp,-p+2}(W_Q(-n\varpi))\otimes_\CC W_Q(0) \nonumber\\
&\quad \oplus \bigoplus_{i=1}^{p-1}
\CC_{1-i/p,-(p-i)^2/2p}\otimes_\CC 
C_{e^\alp,-2p+i+2}(W_Q(-n\varpi))\otimes_\CC W_Q(i\varpi)
\end{align}
of bigraded vector spaces,
where $\CC_{\chm,\cwm}$ is a one-dimensional vector space having charge $\chm$ and weight $\cwm$.

Let $\ell$ be one of $\{0,1,\ldots,p-1\}$ and consider the direct limit $V_{Q-\ell\varpi}\cong\varinjlim W_Q(-(\ell+pi)\varpi)$ for  the series of
embedding
\begin{equation}\label{eqnemb}
W_Q(-\ell\varpi)\subset W_Q(-(\ell+i)\varpi)\subset W_Q(-(\ell+2i)\varpi)\subset \cdots.
\end{equation}
Since  flag \eqref{flagforwq}
factors through \eqref{eqnemb},
we have an infinite flag
\begin{equation}\label{infiniteflag}
0=F_0\subset F_1\subset F_2\subset \cdots (\subset V_{Q-\ell\varpi})
\end{equation}
such that each factor module $F_{j+1}/F_j$ is isomorphic to
one of $W_Q(0), W_Q(\varpi),\ldots,W_Q((p-1)\varpi)$.
We have, by using \eqref{eqn:wqfinchardecomp} with $n=\ell+pi$ with $i\ra\infty$,
 a bigraded linear isomorphism
\begin{align*}
V_{Q-\ell\varpi}&\cong 
C_{e^\alpha,-p+2}(V_{Q-\ell\varpi})\otimes_\CC W_Q(0)
\oplus \bigoplus_{i=1}^{p-1}
\CC_{1-i/p,-(p-i)^2/2p}\otimes_\CC C_{e^\alpha,-2p+i+2}(V_{Q-\ell\varpi})\otimes_\CC W_Q(i\varpi).
\end{align*}
Let $k$ be an integer. 
Recall from \eqref{eqn:calpqp} that 
$$
C_{e^\alp,k-p+2}(V_{Q-\ell\varpi})\cong W_P(-k\varpi)^{\vee,(Q-\ell\varpi)}
\cong W_P(-k\varpi)^{(Q+\ell\varpi),\vee}.
$$
Hence, we have the character identity
\begin{align*}
\ch[C_k(V_{Q-\ell\varpi})](z,q)&=\ch[W_P(-k\varpi)^{(Q+\ell\varpi)}](z^{-1},q)
=\sum_{r\geq0
\ \mathrm{s.t.}\ pr+k+\ell\geq 0} \frac{q^{(pr+\ell)^2/2p}}
{(q)_{pr+k+\ell}}z^{-r-\ell/p}.
\end{align*}
Now, we have obtained a character decomposition of $V_{Q-\ell\varpi}$ given in \cite{BFL} for $p=2$ and recent preprint \cite{Ke2}:
\begin{align}\label{eqnmainchar}
\ch[V_{Q-\ell\varpi}](z,q)&=\sum_{r=0}^\infty \frac{q^{(pr+\ell)^2/2p}}
{(q)_{pr+\ell}}z^{-r-\ell/p}
\sum_{s=0}^\infty \frac{q^{ps^2/2}}{(q)_s}z^s \nonumber\\
&\quad +z\sum_{i=1}^{p-1} q^{-p/2+i}
\sum_{r\geq0
\ \mathrm{s.t.}\ pr-p+i+\ell\geq 0} \frac{q^{(pr+\ell)^2/2p}}
{(q)_{pr-p+i+\ell}}z^{-r-\ell/p}
\sum_{s=0}^\infty \frac{q^{ps^2/2+si}}{(q)_s}z^s.
\end{align}

We finally remark the following.
Recall that $C_{a,0}(\M m)\cong \fd 0m$.
By applying the functor $C_{a,0}(\cdot)$ to the flag in
\Cref{thm:flag}, we have a flag in $\fd 0{-n}$ with factor modules 
$\fd 00,\fd01,\ldots, \fd0{p-1}$ and it is in fact a composition series of $\fd0{-n}$ since the factor modules are all one-dimensional.
Similarly, if we apply the functor $C_{e^\alp,0}(\cdot)$ to the infinite flag \eqref{infiniteflag}, we  have a composition series of $C_{e^\alp,0}(V_{Q-\ell\varpi})\cong W_P(-(p-2)\varpi)^{(Q+\ell\varpi),\vee}$.

\appendix

\section{Proofs}
\label{secfdproofs}

In this appendix, we write proofs of several propositions.

\subsection{Bases of $\fda kp$ and $\fd km$}\label{subseccoinvva}
In this section, we show that $\cB_k$ and $\cB(k,m)$ are basis
 of $\fda kp$ and  $\fd km$, by using the combinatorial bases of 
 $F(p)$ and $\M m$.

Recall that the elements
\begin{equation}\label{eqn:basisatype}
a(-n_r)\cdots a(-n_1)\bm1\quad (r\geq0, 
n_r\geqc p n_{r-1}\geqc p \cdots\geqc p n_1\geqc 0 1)
\end{equation}
form a combinatorial basis $\cB$ of $F(p)$.
Consider the partition $\cB=\cB_{<k}\sqcup \cB_{\geq k}$ of $\cB$, where $\cB_{<k}$ and $\cB_{\geq k}$ are the subsets of $\cB$ consisting of all basis elements \eqref{eqn:basisatype} satisfying $n_r< k$ and $n_r\geq k$, respectively.
Note that the image of $\cB_{<k}$ coincides with the basis $\cB_k$
of $\fda kp$.

\begin{theorem}\label{thmfib}
The images of the elements of $\cB_{<k}$  form a $\CC$-basis 
of $\fda kp$.
\end{theorem}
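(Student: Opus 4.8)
The plan is to derive the theorem from the already-established fact that $\cB$ is a basis of $F(p)$ (cf.\ \Cref{propalg} and \Cref{propmod}) by analysing the submodule $J_k:=\sum_{n\geq0}a(-k-n)F(p)$, for which $\fda kp=F(p)/J_k$. Since $\cB=\cB_{<k}\sqcup\cB_{\geq k}$ is a basis of $F(p)$, the images of $\cB_{<k}$ span $\fda kp$ once $[\cB_{\geq k}]=0$, and they are linearly independent once $J_k\cap\Span{\CC}{\cB_{<k}}=0$; so the whole statement reduces to the single identity $J_k=\Span{\CC}{\cB_{\geq k}}$.

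The inclusion $\Span{\CC}{\cB_{\geq k}}\subseteq J_k$ is immediate: a basis monomial $a(-n_r)\cdots a(-n_1)\bm1\in\cB_{\geq k}$ has $n_r\geq k$, hence lies in $a(-n_r)F(p)\subseteq\sum_{m\geq k}a(-m)F(p)=J_k$. In particular $\cB_k=[\cB_{<k}]$ is already a spanning set of $\fda kp$; and when $k\leq1$ this spanning set is just $\{[\bm1]\}$ while $J_k$ is all of $F(p)$ of positive charge, so $\fda kp=\CC\bm1$ and there is nothing more to prove. Assume henceforth $k\geq2$.

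It remains to prove $J_k\subseteq\Span{\CC}{\cB_{\geq k}}$. Because $F(p)$ is commutative, $a(j)=0$ on $F(p)$ for $j\geq0$, so $F(p)$ is spanned by monomials $a(-m_1)\cdots a(-m_t)\bm1$ with all $m_i\geq1$; using commutativity once more to bring a mode $\geq k$ to the front, one sees that $J_k$ is spanned by exactly those monomials for which $\max_i m_i\geq k$. So it suffices to check that rewriting such a monomial in the basis $\cB$ — by the reduction of \Cref{propalg}: sort the modes (allowed, as $F(p)$ is commutative), apply the recursion \eqref{lemfund} to each adjacent pair violating the difference-$p$ condition, re-sort, and iterate — produces only elements of $\cB_{\geq k}$. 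The invariant to track is that the largest mode index is always $\geq k$. Re-sorting does not change the multiset of mode indices. For \eqref{lemfund} applied with $s=-N_i$, $t=-N_{i+1}$ to a pair of a sorted monomial ($N_i\geq N_{i+1}$ and $N_i<N_{i+1}+p$): discarding the terms that acquire a mode $a(j)$ with $j\geq0$ (which act as $0$), every surviving term on the right-hand side has left mode $N_{i+1}+p+l$ or $N_i+1+l$, both of which exceed $N_i=\max(N_i,N_{i+1})$; hence the overall largest mode index does not decrease, and so stays $\geq k$ throughout. Since the reduction terminates (this is \Cref{propalg}, equivalently \cite{Kaw15}) in a combination of basis vectors, and a basis vector has leading $=$ largest index, the result is a combination of elements of $\cB_{\geq k}$. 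This gives $J_k\subseteq\Span{\CC}{\cB_{\geq k}}$ and hence the theorem; the companion statement for $\fd km$ has the identical proof with $(F(p),\bm1,\cB)$ replaced by $(\M m,v_m,\cB(m))$.

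The step I expect to be the main obstacle is the invariance of ``largest mode index $\geq k$'' under \eqref{lemfund}: it is essential that the modes are sorted \emph{before} the recursion is applied (on an unsorted pair a single term on the right can genuinely have all its modes $<k$), and one must also use that terms acquiring a non-negative mode — which superficially could carry a small index — vanish identically because $F(p)$ is commutative. Termination of the reduction itself is not re-proved here, only quoted from \Cref{propalg}.
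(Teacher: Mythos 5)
Your proposal is correct, and its mathematical core coincides with the paper's: both reduce the theorem to the identity $J_k=\cspan(\cB_{\geq k})$, and the decisive computation in both is that an application of \eqref{lemfund} replaces a pair of modes by terms whose \emph{left} mode strictly exceeds the larger mode of the pair, so the property of containing a mode $\geq k$ can never be destroyed. The difference is purely in the bookkeeping that makes the reduction finite. The paper writes a monomial as $a(-m_r)a(-n_{r-1})\cdots a(-n_1)\bm1$ with $m_r\geq k$ and the inner part already expanded in the basis, and inducts on the inner-mode sum $n_1+\cdots+n_{r-1}$, with the truncation \eqref{bzeroalg} supplying the base case; termination is thus built into the induction, and no sorting (hence no use of the commutativity of $F(p)$) is needed, because with the known-large mode kept as the left factor both families of terms in \eqref{lemfund} have left mode exceeding $m_r\geq k$ regardless of whether $m_r\geq n_{r-1}$. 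You instead run a sort-and-straighten rewriting and track the invariant that the maximal mode stays $\geq k$, quoting \Cref{propalg} for termination. That citation is the one soft spot: \Cref{propalg} asserts that the monomials \eqref{basisfalg} span via some recursive use of \eqref{lemfund}, not the termination of your particular sorted procedure, and, as you note yourself, your invariant is sensitive to the order in which \eqref{lemfund} is applied, so you cannot appeal to an unspecified reduction. The gap is closed in one line: on a sorted pair, \eqref{lemfund} preserves the length and the total weight of a monomial (after discarding the vanishing terms carrying a non-negative mode) and strictly increases the sorted mode sequence lexicographically, and there are only finitely many decreasing sequences of fixed length and weight, so your reduction terminates. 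With that observation added, your argument is complete, and the companion statement for $\fd km$ indeed follows verbatim, as in \Cref{thmfibm}.
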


\begin{proof}
Since we have the partition $\cB=\cB_{<k}\sqcup \cB_{\geq k}$,
it suffices to show that $J_k(F(p))=\cspan(\cB_{\geq k})$.
The inclusion $(\supset)$ is clear. We show $(\subset)$.
Let  $v$ be an element of $J_k(F(p))$.
It suffices to show $v\in\cspan (\cB_{\geq k})$ when $v$ is a monomial of the form $a(-m_r)\cdots a(-m_1)\bm1$
 with $r\geq 1$, $m_1,\ldots, m_r\in\ZZ$ such that
 $m_r\geq k$. 
 We show $v\in \cspan(\cB_{\geq k})$ by induction on $n=m_1+\cdots+m_{r-1}$.
 If $n$ is small enough, then we have $v=0\in\cspan(\cB_{\geq k})$.
 Now, let us consider a general $n$.
By using the basis $\cB$, we see that $v$ is a sum of monomials of the form
$w=a(-m_r)a(-n_{r-1})\cdots a(-n_1)\bm1$ such that
$n_1,\ldots,n_{r-1}$ satisfy 
$n_{r-1}\geqc p \cdots\geqc p n_1\geqc 0 1$.
 It suffices to show $w\in\cspan(\cB_k)$.
 We divide the proof into two cases: (i) $m_r-n_{r-1}\geq p$,
 (ii) $m_r-n_{r-1}< p$.
 If $m_r-n_{r-1}\geq p$, then we have $w\in \cB_{\geq k}$.
If $m_r-n_{r-1}< p$,  by using \eqref{lemfund} with $b=a$ and $g=p$ and the induction hypothesis, we have $w\in \cspan(\cB_{\geq k})$.
Thus, $w\in \cspan(\cB_{\geq k})$, as desired.
\end{proof}

Let $m$ be an integer. 
Consider the partition $\cB(m)=\cB(m)_{<k}\sqcup \cB(m)_{\geq k}$ of $\cB(m)$, where $\cB(m)_{<k}$ and $\cB(m)_{\geq k}$ are the subsets of $\cB(m)$ consisting of all monomials
\eqref{basisfm} satisfying $n_r< k$ and $n_r\geq k$, respectively.
The following theorem is proved in the same way as the proof of \Cref{thmfib}.
Note that the image of $\cB(m)_{<k}$ coincides with $\cB(k,m)$.

\begin{theorem}\label{thmfibm}
The images of the elements of $\cB(m)_{<k}$  form a $\CC$-basis $\cB(k,m)$ of $\fd km$.
\end{theorem}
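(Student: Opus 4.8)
The plan is to follow the proof of \Cref{thmfib} verbatim in the module setting. First I would recall from \Cref{subsecfinrr} that $\fd km \cong \M m / J_k(\M m)$ as $F(p)$-modules, where $J_k(\M m) = \sum_{n\geq 0} a(-n-k)\M m$, and that by \Cref{propmod} the monomials \eqref{basisfm} form the combinatorial basis $\cB(m)$ of $\M m$. Since $\cB(m) = \cB(m)_{<k} \sqcup \cB(m)_{\geq k}$ is a partition, it then suffices to prove the identity $J_k(\M m) = \cspan\bigl(\cB(m)_{\geq k}\bigr)$ of subspaces of $\M m$: granting it, the quotient map $\M m \twoheadrightarrow \fd km$ carries $\cB(m)_{<k}$ bijectively onto $\cB(k,m)$ and kills $\cB(m)_{\geq k}$, so that $\cB(k,m)$ is a basis of $\fd km$.

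The inclusion $\cspan(\cB(m)_{\geq k}) \subseteq J_k(\M m)$ is immediate, since any monomial $a(-n_r)\cdots a(-n_1)v_m \in \cB(m)_{\geq k}$ has $n_r \geq k$ and hence lies in $a(-n_r)\M m \subseteq J_k(\M m)$. For the reverse inclusion, note that $J_k(\M m)$ is spanned by the monomials $a(-m_r)a(-m_{r-1})\cdots a(-m_1)v_m$ with $m_r \geq k$ (as $\M m$ is generated by $v_m$ and spanned by monomials by \Cref{propmodgen}), so it is enough to place each such monomial in $\cspan(\cB(m)_{\geq k})$. I would argue by induction on the interior weight $n := m_1 + \cdots + m_{r-1}$, for fixed length $r$ (with $n := 0$ when $r=1$). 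When $n$ is sufficiently small, the charge-$(r-1)$ factor $a(-m_{r-1})\cdots a(-m_1)v_m$ already vanishes by \eqref{bzero}, which gives the base case. For the inductive step I would first rewrite $a(-m_{r-1})\cdots a(-m_1)v_m$ in the basis $\cB(m)$ of $\M m$, reducing to a sum of monomials $w = a(-m_r)a(-n_{r-1})\cdots a(-n_1)v_m$ with $n_{r-1} \geqc p \cdots \geqc p n_1 \geqc m 1$ and $n_1 + \cdots + n_{r-1} = n$; if $m_r - n_{r-1} \geq p$, then $w$ already belongs to $\cB(m)_{\geq k}$, and otherwise $m_r - n_{r-1} < p$ and I would apply the recursion \eqref{lemfund} with $b = a$, $g = p$ to the pair $a(-m_r)a(-n_{r-1})$.

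The main obstacle — the step requiring the same care as in \Cref{thmfib} — is to verify that every term produced by \eqref{lemfund} in the case $m_r - n_{r-1} < p$ is controlled. Each term of the first sum acquires a leading mode $a(-n_{r-1}-p-j)$ whose index satisfies $n_{r-1} + p + j \geq m_r + 1 + j > k$ (since $m_r - n_{r-1} \leq p-1$ forces $n_{r-1} \geq m_r - p + 1$), while expanding the remaining factor in $\cB(m)$ produces monomials of interior weight $(m_r - p - j) + (n - n_{r-1}) < n$ (because $m_r - n_{r-1} < p \leq p+j$); hence these lie in $\cspan(\cB(m)_{\geq k})$ by the induction hypothesis. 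Likewise, each term of the second sum carries a leading mode $a(-m_r-1-j)$ of index $> k$ together with a remaining factor whose basis expansion has interior weight $n - 1 - j < n$, so the induction hypothesis applies again. Summing all contributions yields $w \in \cspan(\cB(m)_{\geq k})$, which closes the induction and hence proves $J_k(\M m) = \cspan(\cB(m)_{\geq k})$. I expect this bookkeeping to be the only genuinely delicate point, and it is essentially identical to the computation already carried out in the proof of \Cref{thmfib}.
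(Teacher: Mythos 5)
Your proposal is correct and takes essentially the same approach as the paper: the paper simply asserts that \Cref{thmfibm} is "proved in the same way as \Cref{thmfib}," and you have faithfully carried out that adaptation, identifying the quotient presentation $\fd km \cong \M m / J_k(\M m)$, reducing to the identity $J_k(\M m) = \cspan(\cB(m)_{\geq k})$, and verifying the same two-case induction on interior weight via \eqref{lemfund}. Your explicit bookkeeping of the leading-mode indices and interior weights in the two sums, and your clarification that the induction runs at fixed length $r$ with the base case supplied by \eqref{bzero}, fill in exactly the details the paper leaves implicit.
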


\subsection{Proof of \Cref{thmfibcircvac}}\label{proofthmfibcirc}

We here write a proof of \Cref{thmfibcircvac}.

\begin{proof}[Proof of \Cref{thmfibcircvac}]
The proof is similar to that of \Cref{thmfib}.
Set $J_N=\sum_{n\in\Znn}x(-k/p-n/p)F(1/p)$, so that $\ef{k/p}0=F(1/p)/J_k$.
Since we have the partition $\cB^\circ=\cB^\circ_{<k/p}\sqcup \cB^\circ_{\geq k/p}$,
it suffices to show that $J_k=\cspan(\cB^\circ_{\geq k/p})$.
The inclusion $(\supset)$ is clear. We show $(\subset)$.
Let  $v$ be an element of $J_k$.
It suffices to show $v\in\cspan (\cB^\circ_{\geq k/p})$ when $v$ is a monomial of the form $x(-m_r)\cdots x(-m_1)\bm1$
 with $r\geq 1$, $m_1\in\Zp$, $m_{i+1}-m_i\in 1/p+\ZZ$ ($1\leq i\leq r-1$) such that
 $m_r\geq k/p$. 
 We show $v\in \cspan(\cB^\circ_{\geq k/p})$ by induction on $n=m_1+\cdots+m_{r-1}$.
 If $n$ is small enough, then we have $v=0\in\cspan(\cB^\circ_{\geq k/p})$.
 Now, let us consider a general $n$.
By using the basis $\cB^\circ$, we see that $v$ is a sum of monomials of the form
$w=x(-m_r)x(-n_{r-1})\cdots x(-n_1)\bm1$ such that
$n_1,\ldots,n_{r-1}$ satisfy 
$n_{r-1}\geqc {1/p} \cdots\geqc {1/p} n_1\geqc 0 1$.
 It suffices to show $w\in\cspan(\cB^\circ_{\geq k/p})$.
 We divide the proof into two cases: (i) $m_r-n_{r-1}\geq 1/p$,
 (ii) $m_r-n_{r-1}< 1/p$.
 If $m_r-n_{r-1}\geq 1/p$, then we have $w\in \cB^
 \circ_{\geq k/p}$.
If $m_r-n_{r-1}< 1/p$,  by using \eqref{lemfund} with $b=x$ and $g=1/p$ and the induction hypothesis, we have $w\in \cspan(\cB^\circ_{\geq k/p})$.
Thus, $w\in \cspan(\cB^\circ_{\geq k/p})$, as desired.
\end{proof}

\subsection{Character formulas for RF- and EF-type modules}
\label{appendixchar}

Let us compute the character of $\ef{k/p}0$.
By \Cref{thmfibcircvac}, the elements of $[\cB^\circ_{<k/p}]$ which belong to $\ef{k/p}0_{r,d}$  have the form
$$
\left[x\left(-m_r-\frac{r-1}p-1\right)x\left(-m_{r-1}-\frac{r-2}p-1\right)\cdots
 x\left(-m_2-\frac1p-1\right)x(-m_1-1)\bm1\right]
$$
with $m_1,\ldots,m_r\in\ZZ$ such that $d=m_1+\cdots+m_r+r+r(r-1)/2p$, $m_r\geq \cdots \geq m_1\geq 0$ and 
\begin{equation}\label{eqncondmr1}
m_r+\frac{r-1}p+1<\frac kp.
\end{equation}
Since $m_r\in\ZZ$, it follows that \eqref{eqncondmr1} is equivalent to $m_r<\lfloor (k-r+1)/p\rfloor-1$ and it is equivalent to $m_r\leq \lfloor (k-r)/p\rfloor-1$ as $k,r\in\ZZ$.
Therefore, the elements are parametrized by the partitions of the integer $d$ into at most $r$ parts and at most $\lfloor (k-r)/p\rfloor-1$. It implies that the character
 $\ch[\ef{k/p}0](z,q)$ of $\ef{k/p}{0;S,E}$ has the form
 $$
\ch\left[\ef{k/p}0\right](z,q)=\sum_{r\geq0}
q^{r^2/2p+(1-1/2p)r}\begin{bmatrix}
\lfloor \frac{k-r}p\rfloor-1+r \\r\end{bmatrix}_qz^{ r}.
$$
If we consider the bigraded module $\ef{k/p}{0;S,E}$ over $F(p;C,D)$, we have
$$
\ch\left[\ef{k/p}{0;S,E}\right](z,q)=z^Eq^S\sum_{r\geq0}
q^{r^2/2p+(\cwb^\circ-1/2p)r}\begin{bmatrix}
\lfloor \frac{k-r}p\rfloor-1+r \\r\end{bmatrix}_qz^{C^\circ r}.
$$
We now compute the character of $\rf {k/p}{0;S,E}\cong \ef{k/p}{0,0;S,E}$.
Let $i\in\{0,\ldots,p-1\}$ be the remainder of $k+1$ modulo $p$.
We then have
\begin{align*}
\ch\left[\rf{k/p}{0;S,E}\right](z,q)
&=
z^{S+C i/p}q^{E+i^2/2p+i(D^\circ-1/2p)}\\
&\quad\cdot\sum_{r\geq0}
q^{pr^2/2+(pD^\circ-1/2+i)r}
\begin{bmatrix}
\frac{k-i+1}p-2+i+(p-1)r \\i+pr
\end{bmatrix}_q
z^{Cr}.
\end{align*}

\Cref{thmfibcirc} implies that the character of $\ef{k/p}{m/p;\chm,\cwm}$ has the form
$$
\ch\left[\ef{k/p}{\frac mp;\chm,\cwm}\right](z,q)=q^Ez^\chm\sum_{r\geq0}
q^{r^2/2p+(D^\circ-1/2p+m/p)r}
\begin{bmatrix}
\lfloor \frac{k-r-m}p\rfloor-1+r \\r\end{bmatrix}_qz^{C^\circ r}.
$$

We now compute the character of $\rf{k/p}{m/p;\chm,\cwm}$.
Let $i\in\{0,\ldots,p-1\}$ be the remainder of $k-m+1$ divided by $p$.
Since the space $\Mcirc{m/p}^{(1/p+\ZZ)}$
is spanned by  monomials
$x(-n_{rp+i})x(-n_{rp+i-1})\cdots x(-n_1)v_{m/p}$ of length 
$rp+i$ with $r\geq0$,
we  have
\begin{align}\label{eqndnfine}
&\ch\left[\rf{k/p}{\frac mp;\chm,\cwm}\right](z,q) 
=q^{E+i^2/2p+i(\cwb^\circ-1/2p+m/p)}z^{\chm+\chb i/p}
\nonumber\\ &\qquad\qquad\qquad\cdot
\sum_{r\geq0}
q^{pr^2/2+(p\cwb^\circ-1/2+i+m)r}
\begin{bmatrix}
 \frac{k-m-i+1}p-2+i+(p-1)r \\i+pr
\end{bmatrix}_q
z^{Cr}.
\end{align}

\end{document}